\theoremstyle{plain}
\newtheorem{theorem}[equation]{Theorem}
\newtheorem{proposition}[equation]{Proposition}
\newtheorem{lemma}[equation]{Lemma}
\newtheorem{corollary}[equation]{Corollary}
\newtheorem{definition}[equation]{Definition}
\theoremstyle{remark}
\theoremstyle{remark}
\newtheorem{remark}[equation]{Remark}
\numberwithin{equation}{section}
\newtheorem{example}[equation]{Example}
\newcommand{\abs}[1]{\left\vert#1\right\vert}
\newcommand{\norm}[1]{\left\Vert#1\right\Vert}
\newcommand{\ol}{\overline}
\newcommand{\wh}{\widehat}
\newcommand{\wt}{\widetilde}
\newcommand{\xdownarrow}[1]{%
  {\left\downarrow\vbox to #1{}\right.\kern-\nulldelimiterspace}
}
\newcommand*{\dashdownarrow}{%
  \mathrel{%
    \mathpalette\dasharrow@vert{-90}%
  }%
}
\newcommand*{\dashuparrow}{%
  \mathrel{%
    \mathpalette\dasharrow@vert{90}%
  }%
}
\newcommand*{\dasharrow@vert}[2]{%
  \sbox0{$#1\vcenter{}$}%
  \sbox2{$#1\dashrightarrow\m@th$}%
  \dimen@=1.2\dimexpr\ht2-\ht0\relax
  \sbox2{\raisebox{-\ht0}{\unhcopy2}}%
  \ht2=\z@
  \dp2=\z@
  \vcenter{\hbox to 2\dimen@{\hfill\rotatebox{#2}{\box2}\hfill}}%
}
\renewcommand*\env@matrix[1][\arraystretch]{%
  \edef\arraystretch{#1}%
  \hskip -\arraycolsep
  \let\@ifnextchar\new@ifnextchar
  \array{*\c@MaxMatrixCols c}}
\newcommand{\ca}{{\mathcal A}}
\newcommand{\cc}{{\mathcal C}}
\newcommand{\cg}{{\mathcal G}}
\newcommand{\ck}{{\mathcal K}}
\newcommand{\co}{{\mathcal O}}
\newcommand{\cR}{{\mathcal R}}
\newcommand{\cs}{{\mathcal S}}
\newcommand{\cx}{{\mathbb C}}
\newcommand{\zb}{\mathbb Z}
\newcommand{\tb}{\mathbb T}
\newcommand{\rl}{\mathbb R}
\newcommand{\sF}{{\mathscr F}}
\newcommand{\C}{{\mathbb C}}
\newcommand{\h}{{\mathbb H}}
\newcommand{\Q}{{\mathbb Q}}
\newcommand{\R}{{\mathbb R}}
\newcommand{\Z}{{\mathbb Z}}
\begin{document}

\title[Duality and Approximation]{Duality and approximation of Bergman spaces}
\author{D. Chakrabarti , L. D. Edholm  \& J. D. McNeal}
\subjclass[2010]{32A36, 32A70, 32A07}
\begin{abstract} Expected duality and approximation properties are shown to fail on Bergman spaces of domains in $\C^n$, via examples. When the domain admits an operator satisfying certain mapping properties, positive duality and approximation results are proved. Such operators are constructed on generalized Hartogs triangles. On a general bounded Reinhardt domain, norm convergence of Laurent series of Bergman functions is shown. This extends a classical result on Hardy spaces of the unit disc. \end{abstract}
\thanks{Research of the first and third authors was supported by National Science Foundation grants. The first author was also supported by a collaboration grant from the Simons Foundation (\# 316632) and an Early Career internal grant from Central Michigan University.}
\address{Department of Mathematics, \newline Central Michigan University, Mount Pleasant, Michigan, USA}
\email{chakr2d@cmich.edu}
\address{Department of Mathematics, \newline University of Michigan, Ann Arbor, Michigan, USA}
\email{edholm@umich.edu}
\address{Department of Mathematics, \newline The Ohio State University, Columbus, Ohio, USA}
\email{mcneal@math.ohio-state.edu}

\maketitle


\section*{Introduction}\label{S:intro}

If $\Omega\subset\C^n$ is a domain and $p>0$, let $A^p(\Omega)$ denote the Bergman space of holomorphic functions $f$ on $\Omega$ such that

\begin{equation*}
\left\|f\right\|^p_{L^p(\Omega)} = \int_\Omega |f|^p\, dV <\infty,
\end{equation*}
where $dV$ denotes Lebesgue measure.
Three basic questions about function theory on $A^p(\Omega)$ motivate our work:

\begin{description}
\item[\rm{(Q1)}] What is the dual space of $A^p(\Omega)$?
\smallskip
\item[\rm{(Q2)}] Can an element in $A^p(\Omega)$ be norm approximated by holomorphic functions with better global behavior?
\smallskip
\item[\rm{(Q3)}] For $g\in L^p(\Omega)$, how does one construct $G\in A^p(\Omega)$ that is nearest to $g$?
\end{description}

\noindent The questions are stated broadly at this point; precise formulations accompany results in the sections below.

At first glance (Q1-3) appear independent -- one objective of the paper is to show the questions are highly interconnected. On planar domains some connections were shown in \cite{Hed02} and \cite{Durenbook}. Our paper grew from the observation that
irregularity of the Bergman projection described in \cite{EdhMcN16b} has surprising consequences concerning (Q1-3). In particular: there are bounded pseudoconvex domains $D\subset\C^2$ such that 

\begin{itemize}
\item[(a)] the dual space of $A^p(D)$ cannot be identified, even quasi-isometrically, with $A^q(D)$ where $\frac 1p +\frac 1q =1$, 
\item[(b)] there are functions in $A^p(D)$,  $p<2$, that cannot be $L^p$-approximated by functions in $A^2(D)$, and 
\item[(c)] the $L^2$-nearest holomorphic function to a general $g\in L^p(D)$ is not in $A^p(D)$. 
\end{itemize}
Note (a) says the expected Riesz duality pattern  $\left(L^p\right)'\sim L^q$ does not extend to Bergman spaces of general pseudoconvex domains in $\C^n$.
The fact that $\left(A^p\right)'\not\sim A^q$ also has a significant refinement: the identification fails for elementary coefficient functionals. The domains $D$ above are Reinhardt and $0\notin D$. If $f(z)=\sum_{\alpha\in\Z^2} a_\alpha z^\alpha$
belongs to $A^p(D)$, it is not difficult to show the map $f\to a_\alpha$ belongs to $A^p(D)'$. The proof of (a) yields that some of these functionals are not represented as an $L^2$ pairing with a holomorphic function. 

The negative examples frame our positive answers to (Q1-3) and are demonstrated in Section \ref{S:failure}. These results are called {\it breakdowns} of the function theory, to indicate a break with expectations coming from previously studied special cases. But since prior results on (Q1-3) for domains in $\C^n$ are sparse, the examples in Section \ref{S:failure} may represent typical phenomena.  

The initial  goal of the paper is to show how $L^p$ mapping properties of operators related to the Bergman projection, $\bm{B} = \bm{B}_\Omega$, give answers to (Q1-3). Let $\bm{P}:L^2(\Omega)\longrightarrow A^2(\Omega)$ be a bounded operator given by an integral formula

\begin{equation}\label{E:generalOp}
\bm{P}f(z)=\int_\Omega P(z,w) f(w)\, dV(w).
\end{equation}
For $p>0$ fixed, consider the conditions

\begin{description}
\item[\rm{(H1)}] $\exists \, C>0$ such that $\left\|\bm{P}f\right\|_p\leq C\|f\|_p\quad\forall f\in L^p(\Omega)$.  ($\bm{P}$ is bounded on $L^p$)
\smallskip
\item[\rm{(H2)}] $\bm{P}h=h\quad\forall h\in A^p(\Omega)$. ($\bm{P}$ reproduces $A^p$)
\end{description}
Properties (H1-2) will also be invoked on the operators  $\left|\bm{P}\right|$ and $\bm{P}^\dagger$ associated to $\bm{P}$, defined in Section \ref{SSS:basic}. 

A general duality result involves these properties. 
For $1<p<\infty$, let $q$ be the conjugate exponent of $p$. Define the conjugate-linear map $ \Phi_p:A^q(\Omega) \to A^p(\Omega)'$
by the relation $ \Phi_p(g)\big(f\big) =\int_\Omega f\ol{g}\, dV$. 

\begin{proposition}\label{P:intro1}
Let $\Omega \subset \C^n$ be a bounded domain. Let $1<p<\infty$ be given and $q$ be conjugate to $p$ . Suppose there exists $\bm{P}$ of the form  \eqref{E:generalOp}  such that (i) $\left|\bm{P}\right|$ satisfies (H1), (ii) $\bm{P}$ satisfies (H2), and (iii) $Ran\left(\bm{P}^\dagger\right)\subset \co(\Omega)$. 

Then $\Phi_p: A^q(\Omega)\longrightarrow A^p(\Omega)'$ is surjective.
\end{proposition}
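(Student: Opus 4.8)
The plan is to run the classical Hahn--Banach/Riesz duality argument inside $L^p(\Omega)$ and then use hypotheses (i)--(iii) to replace the resulting $L^q$ representative by a \emph{holomorphic} one that induces the same functional. First, since $\Omega$ is bounded, point evaluations are continuous on $A^p(\Omega)$, so $A^p(\Omega)$ is a closed subspace of $L^p(\Omega)$. Given $\lambda\in A^p(\Omega)'$, I would extend it by Hahn--Banach to $\wt\lambda\in L^p(\Omega)'$ and invoke the Riesz representation of $\left(L^p\right)'$ (valid since $1<p<\infty$) to obtain $G\in L^q(\Omega)$ with $\wt\lambda(f)=\int_\Omega f\,\ol G\,dV$ for all $f\in L^p(\Omega)$; in particular $\lambda(f)=\int_\Omega f\,\ol G\,dV$ for every $f\in A^p(\Omega)$. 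This $G$ need not be holomorphic, and the entire content of the proposition is to upgrade it to an element of $A^q(\Omega)$ representing $\lambda$.

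Second, I would set $g:=\bm{P}^\dagger G$ and claim $\lambda=\Phi_p(g)$. For $f\in A^p(\Omega)$, hypothesis (H2) gives $\bm{P}f=f$, hence $\lambda(f)=\int_\Omega (\bm{P}f)\,\ol G\,dV$. Applying the defining adjoint identity $\int_\Omega (\bm{P}f)\,\ol h\,dV=\int_\Omega f\,\ol{\bm{P}^\dagger h}\,dV$ with $h=G$ then yields $\lambda(f)=\int_\Omega f\,\ol{\bm{P}^\dagger G}\,dV=\int_\Omega f\,\ol g\,dV=\Phi_p(g)(f)$. Thus surjectivity reduces to verifying $g\in A^q(\Omega)$.

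Third, I must check $g\in A^q(\Omega)$. Holomorphicity is immediate from (iii): $G\in L^q(\Omega)$ lies in the domain of $\bm{P}^\dagger$ and $\mathrm{Ran}\left(\bm{P}^\dagger\right)\subset\co(\Omega)$, so $g=\bm{P}^\dagger G$ is holomorphic. For the norm bound I would use the pointwise domination $\abs{g}=\abs{\bm{P}^\dagger G}\le \abs{\bm{P}^\dagger}\abs{G}$, where $\abs{\bm{P}^\dagger}$ is the positive-kernel operator with kernel $\abs{P(\cdot,\cdot)}$; since this kernel is real, $\abs{\bm{P}^\dagger}=\abs{\bm{P}}^{\dagger}$ is the $L^2$-adjoint of $\abs{\bm{P}}$, so the $L^p$-boundedness of $\abs{\bm{P}}$ in (i) dualizes to $L^q$-boundedness of $\abs{\bm{P}^\dagger}$ with the same operator norm. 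Hence $\norm{g}_q\le \norm{\abs{\bm{P}}}_{p\to p}\,\norm{G}_q<\infty$, giving $g\in A^q(\Omega)$.

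The hard part, and the reason (H1) is assumed for $\abs{\bm{P}}$ rather than merely for $\bm{P}$, is the rigorous justification of the adjoint identity as an interchange of integration. The relevant double integral is dominated by $\int_\Omega \abs{G(z)}\,\bigl(\abs{\bm{P}}\abs{f}\bigr)(z)\,dV(z)\le \norm{G}_q\,\norm{\abs{\bm{P}}\abs{f}}_p\le \norm{\abs{\bm{P}}}_{p\to p}\,\norm{G}_q\,\norm{f}_p<\infty$ by H\"older's inequality, so Fubini applies; the very same estimate simultaneously delivers the $L^q$ membership of $g$ used above. The Hahn--Banach/Riesz portion is routine, and the crux is that (i)--(iii) together make $\bm{P}^\dagger$ a bounded projection carrying the abstract $L^q$ representative $G$ to the concrete holomorphic $A^q$ representative $g$.
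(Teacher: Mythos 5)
Your proof is correct and follows essentially the same route as the paper's (Theorem \ref{T:surjectivity}): extend $\lambda$ by Hahn--Banach, represent it by $G\in L^q(\Omega)$ via Riesz, set $g=\bm{P}^\dagger G$, and use the generalized adjoint identity $\langle \bm{P}f,G\rangle=\langle f,\bm{P}^\dagger G\rangle$ together with (H2) to conclude $\lambda=\Phi_p(g)$. The Fubini/Tonelli justification and the $L^q$-boundedness of $\left|\bm{P}^\dagger\right|$ that you prove inline are exactly the content of the paper's Proposition \ref{P:gen_self_adjoint}, and your explicit upgrade of $g$ from $\co(\Omega)$ to $A^q(\Omega)$ correctly handles the weaker hypothesis (iii) of Proposition \ref{P:intro1}.
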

\noindent A general approximation result also involves properties (H1-2).

\begin{proposition}\label{P:intro2}
Let $\Omega \subset \C^n$ be a domain.  For a given $1<p< 2$, suppose there exists an operator $\bm{P}$ of the form \eqref{E:generalOp}  such that
$\bm{P}$ satisfies (H1) and (H2).

 Then every $f\in A^p(\Omega)$ can be approximated in the $L^p$ norm by a sequence $f_n\in A^2(\Omega)$.
\end{proposition}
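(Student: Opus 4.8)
The plan is to build the approximating sequence in two stages: first approximate $f$ crudely at the level of $L^p$ functions, discarding holomorphicity altogether, and then restore holomorphicity by applying $\bm{P}$. Properties (H1) and (H2) are precisely what make the second stage controllable, so no quantitative estimate on the crude approximation is needed beyond $L^p$-convergence.

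First I would approximate $f\in A^p(\Omega)\subset L^p(\Omega)$ in $L^p$-norm by functions lying in $L^2(\Omega)\cap L^p(\Omega)$, which need not be holomorphic. Fix a compact exhaustion $K_1\subset K_2\subset\cdots$ of $\Omega$ and set
\[
g_n = f\cdot \mathbf{1}_{E_n}, \qquad E_n = K_n\cap\{z:|f(z)|\le n\}.
\]
Each $g_n$ is bounded by $n$ and supported on a set of finite Lebesgue measure, so $\int_\Omega|g_n|^2\,dV\le n^2|K_n|<\infty$ and hence $g_n\in L^2(\Omega)$; it is plainly in $L^p(\Omega)$ as well. Since $E_n\uparrow\Omega$ up to a null set and $|f-g_n|^p\le|f|^p\in L^1(\Omega)$, dominated convergence gives $\|f-g_n\|_p\to 0$. (The compact truncation handles the case of unbounded $\Omega$; on a bounded domain one may simply truncate values.) Because $g_n\in L^2(\Omega)$, the mapping $\bm{P}:L^2(\Omega)\to A^2(\Omega)$ produces $f_n:=\bm{P}g_n\in A^2(\Omega)$, which are the candidate approximants.

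Next I would estimate the $L^p$ error. Since $f\in A^p(\Omega)$, property (H2) gives $\bm{P}f=f$; since $g_n-f\in L^p(\Omega)$, linearity together with (H1) yields
\[
\|f_n-f\|_p = \|\bm{P}g_n-\bm{P}f\|_p = \|\bm{P}(g_n-f)\|_p \le C\,\|g_n-f\|_p \longrightarrow 0 .
\]
Thus $f_n\in A^2(\Omega)$ converges to $f$ in $L^p$, which is the assertion.

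The one point demanding care — and the main, if mild, obstacle — is the consistency of $\bm{P}$ across the two function classes in the displayed chain: $f_n=\bm{P}g_n$ is produced by the $L^2$ theory, $\bm{P}f=f$ is read off from (H2), and (H1) is invoked on $g_n-f\in L^p(\Omega)$. Since all three are given by the single integral kernel $P(z,w)$ of \eqref{E:generalOp} and $g_n\in L^2(\Omega)\cap L^p(\Omega)$, the $L^2$- and $L^p$-versions of $\bm{P}$ agree on $g_n$, so the linear manipulation is legitimate and the argument closes at once. I would make this coincidence explicit by citing the integral representation established in the section where $\bm{P}$ is defined.
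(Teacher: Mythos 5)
Your proof is correct and follows essentially the same route as the paper's (Theorem \ref{T:easy_approx}): approximate $f$ in $L^p$ by functions in $L^2(\Omega)\cap L^p(\Omega)$, apply $\bm{P}$, and use (H1) together with $\bm{P}f=f$ from (H2) to control $\|\bm{P}g_n-f\|_p$. The only difference is cosmetic — the paper takes $\phi_n\in C_c^{\infty}(\Omega)$ with $\|\phi_n-f\|_p\to 0$ where you use truncations $f\cdot\mathbf{1}_{E_n}$, both serving the same purpose of landing in $L^2\cap L^p$ so that $\bm{P}$ of the approximant lies in $A^2(\Omega)$.
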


The prime example of an operator \eqref{E:generalOp} is $\bm{P}=\bm{B}$. There is no {\it a priori} reason the Bergman projection should satisfy (H1) or (H2) unless $p=2$, but there are many classes of domains where $\bm{B}$ is known to satisfy both properties for all exponents $1 <p<\infty$ -- see \cite{PhoSte77, McNeal89, NagRosSteWai89, McNeal91, McNeal94, McNeal94b, McNSte94, Koenig02}. On many other classes of domains the answer is unknown. However $\bm{B}$ fails to satisfy property (H1) for all $1< p<\infty$ in general. This was recently established in \cite{BarSah12, EdhMcN16, ChaZey16, EdhMcN16b} for some pseudoconvex domains in $\C^2$. It was observed earlier for classes of roughly bounded planar domains in \cite{LanSte04} and noted for a non-pseudoconvex, but smoothly bounded, family of domains even earlier in \cite{Bar84}. It turns out that $\bm{B}$ also fails to satisfy property (H2) in general; see Example \ref{E:FailureRep1}. 

The second goal of the paper is to construct substitute operators relevant to (Q1-3) in cases where $\bm{B}$ does not satisfy (H1) or (H2). In general this goal is inaugural, but it is achieved for the generalized Hartogs triangles studied in \cite{EdhMcN16b}. The results in Section  \ref{S:GenHartogs} yield the following 

\begin{theorem}\label{T:intro3}

Let $\h_{m/n}$,  $\frac mn \in \Q^+$, be given by \eqref{D:genHartogs}.
 For each $p\ge2$, there is an operator $\widetilde{\bm{P}}$ of the form \eqref{E:generalOp} such that
(i) $\left|\widetilde{\bm{P}}\right|$ satisfies (H1), and (ii) $\widetilde{\bm{P}}$ satisfies (H2).

Moreover, $ \widetilde{\bm{P}}g$ is the unique $L^2$-nearest element in $A^p(\h_{m/n})$ to $g\in L^p(\h_{m/n})$.
\end{theorem}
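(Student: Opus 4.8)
The plan is to exploit the Reinhardt symmetry of $\h_{m/n}$ and reduce everything to monomials. Since $\h_{m/n}$ is invariant under the $\T^2$-action $(e^{i\theta_1},e^{i\theta_2})\cdot(z_1,z_2)=(e^{i\theta_1}z_1,e^{i\theta_2}z_2)$, the monomials $z^\alpha$ ($\alpha\in\Z^2$) are mutually orthogonal in $A^2(\h_{m/n})$, and the square-integrable ones form an orthogonal basis. Put $S_p=\{\alpha\in\Z^2:z^\alpha\in L^p(\h_{m/n})\}$; as the domain is bounded, $S_p\subseteq S_2$ for $p\ge2$. First I would record that every $f\in A^p$ has Laurent support inside $S_p$: averaging $f$ against $e^{-i\alpha\cdot\theta}$ over $\T^2$ extracts its $\alpha$-component $a_\alpha z^\alpha$, and Minkowski's integral inequality gives $\|a_\alpha z^\alpha\|_p\le\|f\|_p$, so $a_\alpha\neq0$ forces $\alpha\in S_p$. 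Since finite monomial sums lie in $A^p$, the $L^2$-closure of $A^p$ is exactly $V_p:=\overline{\operatorname{span}\{z^\alpha:\alpha\in S_p\}}^{\,L^2}\subseteq A^2$.

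With this dictionary I would take $\widetilde{\bm P}$ to be the operator \eqref{E:generalOp} with truncated kernel
\[
\widetilde P(z,w)=\sum_{\alpha\in S_p}\frac{z^\alpha\,\ol w^{\,\alpha}}{\|z^\alpha\|_{L^2}^2},
\]
i.e.\ the $L^2$-orthogonal projection $Q_p$ of $L^2(\h_{m/n})$ onto $V_p$. Property (H2) is then immediate, since an $h\in A^p$ is supported on $S_p$ and hence reproduced term by term. Property (H1) for $|\widetilde{\bm P}|$---the $L^p$-boundedness of the positive operator with kernel $|\widetilde P(z,w)|$---is the substantive analytic input and is exactly what the construction of Section \ref{S:GenHartogs} supplies: discarding the indices $S_2\setminus S_p$ removes precisely the terms responsible for the $L^p$-unboundedness of the full Bergman projection, and the surviving positive kernel is controlled by a Schur test, equivalently by transferring to the product model behind the weighted space $\bs$.

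Granting (H1)--(H2), the ``moreover'' clause is soft. From the pointwise bound $|\widetilde{\bm P}g(z)|\le|\widetilde{\bm P}|(|g|)(z)$, (H1) yields $\|\widetilde{\bm P}g\|_p\le C\|g\|_p$, so $\widetilde{\bm P}g\in L^p$; and $\widetilde{\bm P}g=Q_pg\in V_p\subseteq A^2$ is holomorphic, whence $\widetilde{\bm P}g\in A^p(\h_{m/n})$. Because $Q_p$ is the orthogonal projection onto $V_p\supseteq A^p$, we have $g-\widetilde{\bm P}g\perp A^p$ in $L^2$, so for every $h\in A^p$ the Pythagorean identity gives $\|g-h\|_{L^2}^2=\|g-\widetilde{\bm P}g\|_{L^2}^2+\|\widetilde{\bm P}g-h\|_{L^2}^2$. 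The right-hand side is minimized exactly when $h=\widetilde{\bm P}g$, so $\widetilde{\bm P}g$ is the unique $L^2$-nearest element of $A^p(\h_{m/n})$ to $g$.

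The real obstacle is Property (H1) for $|\widetilde{\bm P}|$: the projection and nearest-point reasoning is formal, whereas $L^p$-boundedness of the truncated positive kernel requires the explicit estimates of Section \ref{S:GenHartogs}. The hypothesis $p\ge2$ is used twice---to ensure $A^p\subseteq A^2$ (so that the monomial basis and the finite $L^2$-distance are both available) and in those kernel estimates.
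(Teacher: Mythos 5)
Your architecture is the paper's own: the operator you call $Q_p$ is exactly the sub-Bergman projection $\widetilde{\bm{B}^p}$ of Section \ref{SS:subBergman}, with kernel \eqref{D:DefOfBp} summed over the $L^p$-allowable indices; your identification of the $L^2$-closure of $A^p(\h_{m/n})$ with $V_p$ is Proposition \ref{prop-subbergman1}(i); your (H2) argument and the Pythagorean minimization argument coincide with the paper's proof of Theorem \ref{T:SubBergmanLpMapping} (reproduction part) and Proposition \ref{T:SubBergmanMinimization}. Your Laurent-support step (rotational averaging against $e^{-i\alpha\cdot\theta}$ plus Minkowski's integral inequality) is a clean and valid variant of the paper's route through Proposition \ref{prop-coefficients} and Corollary \ref{cor-aalpha}. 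All of this soft functional analysis is correct.

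The genuine gap is (H1), which you yourself flag as ``the real obstacle'' and then do not prove. Deferring to ``the construction of Section \ref{S:GenHartogs}'' is circular in a blind proof --- that construction \emph{is} the content of the theorem --- and the two heuristics you offer in its place do not constitute an argument. First, ``discarding the indices $S_2\setminus S_p$ removes precisely the terms responsible for the $L^p$-unboundedness'' is not self-justifying: removing terms from a kernel and then taking absolute values gives no a priori control, since cancellation is destroyed; one must re-estimate the truncated positive kernel from scratch. Second, invoking ``a Schur test'' without exhibiting the kernel bound and the test function proves nothing. What the paper actually supplies, and what is missing from your proposal, is: (a) the explicit norm formula \eqref{E:(p,m/n)-allowable Norm}, giving $\norm{e_\alpha}_2^{-2}$ in closed form; (b) the decomposition \eqref{D:LpkSubBergmanKernel} of $\widetilde{B^{p_k}}$ into finitely many line-sums $b^{\,p_j}$ (over $n\alpha_1+m\alpha_2=j$) plus $\widetilde{B^\infty}$, which rests on the finite-threshold stabilization of Proposition \ref{P:ApClasses}; (c) closed-form summation of each piece yielding the pointwise bounds \eqref{E:subkernelBound} and \eqref{E:BinftyEstimate}; and (d) application of the type-$A$ mapping theorem (Proposition \ref{P:typeA_mapping}) with $A=2n+\frac km$, together with the arithmetic check that $[\,p_k,p_{k+1})$ lies inside the boundedness window $(q_{k+1},p_{k+1})$ of \eqref{E:typeA_range} --- this quantitative matching between truncation level $k$ and exponent $p$ is exactly where the theorem could fail and is nowhere addressed in your sketch. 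Without steps (a)--(d), the statement you need, $L^p$-boundedness of $\left|\widetilde{\bm{B}^p}\right|$, is an assertion, not a theorem.
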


\noindent The operators $\widetilde{\bm{P}}$ are called {\it sub-Bergman projections}: their kernels are given as subseries of the infinite sum \eqref{E:BergmanInfiniteSum} defining the Bergman kernel. This can be done abstractly (see Section \ref{S:Reinhardt}), but the utility of sub-Bergman operators appears when their kernels can be estimated precisely enough to show they create $A^p$ functions. In such cases, these projections are useful beyond the applications to (Q1-3) shown here.

We mention there is a very fertile area in one--dimensional Hardy space theory to which Theorem \ref{T:intro3} relates, often labeled {\it extremal dual problems}. These problems deal with approximating {\it non-holomorphic} functions 
on the unit circle by holomorphic functions on the disc. There are numerous important results in this area -- see \cite{Durenbook}, Ch. 8; \cite{Garnettbook}, Ch. IV; \cite{KoosisBook}, Ch VII.  These results can be compared/contrasted with the positive result of Proposition \ref{T:SubBergmanMinimization} and the breakdown in Example \ref{E:FailureApprox3} below.

Our third main result concerns (Q2) and does not involve the hypotheses (H1-2). If $\cR$ is a bounded Reinhardt domain and $f\in\co(\cR)$, then $f$ has a unique Laurent expansion $f(z)=\sum_{\alpha\in\Z^n} a_\alpha z^\alpha$ converging uniformly on compact subsets of $\cR$. Note summation is indexed by $\Z^n$ since $0\notin\cR$ is possible. Let $S_Nf$ denote the square partial sum of this series; see Section \ref{SS:TruncatedConvergence}. If
$f\in A^p(\cR)$, these rational functions converge in $L^p$:

\begin{theorem}\label{T:intro4}
Let $\cR$ be a bounded Reinhardt domain in $\C^n$, $1<p <\infty$ and $f\in A^p(\cR)$.  

Then 
\begin{equation*} 
\norm{S_N f - f}_{p} \to 0\qquad\text{as }N\to \infty .
\end{equation*}
\end{theorem}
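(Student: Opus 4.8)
The plan is to prove the theorem by the classical principle that a \emph{uniformly} bounded family of operators which converges on a dense set converges everywhere. Concretely, I would establish two facts: (i) the square partial sum operators $S_N$ are bounded on $A^p(\cR)$ with norm independent of $N$, and (ii) a dense subclass of $A^p(\cR)$ — the Laurent polynomials lying in $A^p(\cR)$ — is recovered by the $S_N$ in the limit. Both facts reduce, through the Reinhardt symmetry, to one-dimensional statements about Fourier series on the circle, with the restriction $1<p<\infty$ entering exactly where the M. Riesz theorem does. Note that the hypotheses (H1--2) play no role here.

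First I would set up the fiber decomposition. Writing $z_j=r_je^{i\theta_j}$ and letting $\omega\subset\R_{\ge0}^n$ be the Reinhardt shadow of $\cR$ in $r$-space, the Reinhardt invariance makes the angular fiber over each $r\in\omega$ the \emph{full} torus $\T^n$, so Lebesgue measure factors as $dV=\bigl(\prod_j r_j\,dr_j\bigr)\,d\theta$ on $\omega\times\T^n$. For $f\in A^p(\cR)$ put $g_r(\theta)=f(r_1e^{i\theta_1},\dots,r_ne^{i\theta_n})$; by Fubini $g_r\in L^p(\T^n)$ for a.e.\ $r\in\omega$, and the $\theta$-Fourier coefficient of $g_r$ at frequency $\alpha\in\Z^n$ is precisely $a_\alpha r^\alpha$. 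The crucial observation is that truncating the Laurent series to $\norm{\alpha}_\infty\le N$ is, fiberwise, exactly the \emph{square Fourier partial sum} on $\T^n$: one has $(S_Nf)(re^{i\theta})=(\mathbf S_N g_r)(\theta)$, where $\mathbf S_N$ keeps the torus frequencies $\alpha$ with $\abs{\alpha_j}\le N$ for every $j$.

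For the uniform bound, I would use that $\mathbf S_N=S_N^{(1)}\cdots S_N^{(n)}$ is the composition of the one-dimensional Dirichlet partial sums $S_N^{(j)}$ in each angular variable. By the M. Riesz theorem the one-dimensional Dirichlet sums are bounded on $L^p(\T)$ uniformly in $N$ for $1<p<\infty$; applying this in each variable and integrating (Fubini/Minkowski) yields $\norm{\mathbf S_N}_{L^p(\T^n)\to L^p(\T^n)}\le C_p^{\,n}$, uniformly in $N$. Hence fiberwise $\norm{\mathbf S_N g_r}_{L^p(\T^n)}\le C_p^{\,n}\norm{g_r}_{L^p(\T^n)}$, and integrating $p$-th powers against $\prod_j r_j\,dr_j$ over $\omega$ gives $\norm{S_N f}_{p}\le C_p^{\,n}\norm{f}_{p}$ for all $N$. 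This is the only place $1<p<\infty$ is used, and it is precisely where the statement must fail at the endpoints.

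For density and the conclusion I would run the companion Fejér argument. The Cesàro (Fejér) means act fiberwise as $(\sigma_N f)(re^{i\theta})=(\mathbf F_N g_r)(\theta)$, where $\mathbf F_N$ is convolution against the positive, mass-one Fejér kernel on $\T^n$; thus $\norm{\mathbf F_N g_r}_{L^p(\T^n)}\le\norm{g_r}_{L^p(\T^n)}$ and $\mathbf F_N g_r\to g_r$ in $L^p(\T^n)$ for a.e.\ $r$. Dominated convergence over $\omega$ then gives $\sigma_N f\to f$ in $L^p(\cR)$. Each $\sigma_N f$ is a finite Laurent polynomial, holomorphic on $\cR$, and an $L^p$-contraction of $f$ on each fiber, hence lies in $A^p(\cR)$; so the Laurent polynomials in $A^p(\cR)$ are dense. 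Finally, given $f\in A^p(\cR)$ and $\varepsilon>0$, choose such a polynomial $g$ with $\norm{f-g}_p<\varepsilon$; then $S_N g=g$ for $N$ exceeding the degree of $g$, so
\[
\norm{S_N f-f}_p\le\norm{S_N(f-g)}_p+\norm{g-f}_p\le (C_p^{\,n}+1)\,\varepsilon
\]
for all such $N$, proving $\norm{S_N f-f}_p\to0$. The main obstacle is essentially organizational: making precise that the Laurent truncation on $\cR$ coincides with the torus square partial sum on each fiber, so that Reinhardt invariance reduces everything to the circle — after which all analytic content is supplied by the classical M. Riesz and Fejér theorems.
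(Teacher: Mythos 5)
Your proof is correct, and while it shares its skeleton with the paper's argument (uniform bound plus dense subclass), it diverges genuinely in how density is established. The uniform bound is handled the same way in both: the paper's Lemma \ref{lem-boundedness} also identifies $S_N$ fiberwise with the square Fourier partial sum on $\T^n$ via \eqref{eq-lpnorm} and invokes the M.~Riesz theorem (cited for the torus directly, where you factor it through one-dimensional Dirichlet sums --- an immaterial difference). The real divergence is in proving that Laurent polynomials in $A^p(\cR)$ are dense. The paper does this by duality: it introduces the truncation operators $S_N'$ on $A^p(\cR)'$ built from the coefficient functionals $a_\alpha$, proves $S_N'\lambda\to\lambda$ in the dual norm (Proposition \ref{prop-dualconv}, which itself uses reflexivity of $A^p(\cR)$ and Hahn--Banach), and then deduces density of $\mathrm{span}\{e_\alpha\}$ by a second Hahn--Banach application. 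You instead run a fiberwise Fej\'er argument: positivity and mass one of the Fej\'er kernel give a contraction on each fiber, dominated convergence over the shadow gives $\sigma_N f\to f$ in $L^p(\cR)$, and each Ces\`aro mean is a finite Laurent polynomial lying in $A^p(\cR)$ (holomorphic since it only involves monomials present in the Laurent expansion of $f$, and in $L^p$ by the fiberwise contraction). Your route is more elementary and self-contained --- it avoids reflexivity, dual operators, and Hahn--Banach entirely, and as a bonus establishes density of Laurent polynomials even for $p=1$, where the paper's reflexivity argument is unavailable. What the paper's heavier route buys is Proposition \ref{prop-dualconv} itself, the norm-convergent series expansion of functionals $\lambda=\lim S_N'\lambda$, which is not a throwaway: it is reused later in the proof of Proposition \ref{P:ReinhardtDuals}(iii) to identify the annihilators $N^{q,p}(\cR)^\circ$ and $G^{q,p}(\cR)^\circ$ and obtain the direct sum decomposition of $A^q(\cR)'$. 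So your argument proves the theorem more cleanly, while the paper's argument generates dual-space machinery it needs elsewhere.
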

\noindent This result is a several variables extension of a theorem due to Riesz on Hardy spaces of the unit disc; see pages 104--110 in \cite{Garnettbook} for a proof of Riesz's theorem.

Results about (Q1-3) for $1<p<\infty$ on planar discs are known, which guided our investigation. If $U$ is the unit disc in $\C$ and $p=2$, all three questions have elementary answers. For (Q1), the dual space 
$A^2(U)'$ is isometrically isomorphic to $A^2(U)$ itself, since $A^2(U)$ is a Hilbert space; this fact holds on a general $\Omega\subset \C^n$. For (Q2), if $f(z)=\sum_{n=0}^\infty a_nz^n\in A^2(U)$, then
$\left\|\sum_{n=0}^N a_n z^n -f\right\|_{L^2}\longrightarrow 0$ as $N\to \infty$ by a simple application of Parseval's formula. For (Q3), $G=\bm{B}_U (g)$ gives the $L^2$-closest element in $A^2(U)$ to any $g\in L^2(U)$. 
Since $\bm{B}$ is $L^2$ bounded on a general domain per definition, this fact also holds on a general $\Omega$. For exponents $p\neq 2$, still on the disc $U$, results also exist. The proofs of these results crucially use boundedness of the Bergman or Szeg\H o projection on $L^p(U)$. For (Q1), $A^p(U)'$ is quasi-isometrically isomorphic to $A^q(U)$ where $\frac 1p +\frac 1q =1$ and $1<p<\infty$; see \cite{ZahJud64,Axl88}. Thus the dual spaces of $A^p(U)$ mimic the pattern given by Riesz's characterization of the duals of general $L^p$ spaces, except for a quasi-isometric constant. The constant comes from the operator norm of 
$\bm{B}$ acting on $L^p$. There are also characterizations of $A^1(U)'$ and $A^\infty(U)'$, see \cite{DurSch04}. For (Q2), a dilation argument, see e.g. \cite{DurSch04} page 30, shows that polynomials are dense in $A^p(U)$ for all $0<p<\infty$. For $1<p<\infty$, this density is strengthened in \cite{Garnettbook, Zhu91}: if $f(z)=\sum_{n=0}^\infty a_nz^n\in A^p(U)$, 
$$(*)\,\,\,\, \left\|\sum_{n=0}^N a_n z^n -f\right\|_{L^p}\longrightarrow 0, \qquad N\to \infty.$$
While the form of $(*)$ is the same as when $p=2$, its proof is not an elementary truncation argument.  The proofs in 
\cite{Garnettbook, Zhu91} pass through the Hardy spaces $H^p$ to get estimates in $A^p$ and so use boundedness of the Szeg\H o projection on $L^p$, $1<p<\infty$. We point out the Bergman and Szeg\H o projections on $U$ have the same range of $L^p$ boundedness, but also note this is a special coincidence. Finally for (Q3), the fact that $\bm{B}_U$ is bounded on $L^p$ for $1<p<\infty$ shows  $G=\bm{B}(g)$ solves (Q3), if ``nearest'' is interpreted in the $L^2$ sense. Generalizations of these results on $U$ to simply connected domains $\Omega$ can be proved if the Riemann map from $\Omega$ to $U$ is sufficiently well-behaved, though this seems not to appear in print.
For planar domains other than $U$, the only  significant result about (Q1) known to us is \cite{Hed02}: There are certain domains $\Omega$ such that $A^p(\Omega)'$ is not isomorphic to $A^q(\Omega)$, if $p$ lies outside an interval centered at 2. 

In several variables, duality and approximation questions in the spirit of (Q1-2) seem not to have been considered when $p\neq 2$. However significant results about duality in $L^2$-Sobolev spaces $W^2_s(\Omega)$ have been obtained. Results of this type first occur in work on extension of biholomorphic mappings, \cite{BellLigocka, Bell81a}. These results were greatly developed and generalized in \cite{Bell82a, Bell82b, BellBoas84, Straube84, Komatsu84}. Around (Q2), prior results on approximation in $\co(\Omega)$ have concentrated on uniform norm approximation or the Hilbert norms $W^2_s(\Omega)$. Uniform approximation theorems have been derived from integral formulas but require restrictive geometric assumptions on $b\Omega$, see \cite{Henkin69, Kerzman71, Lieb70, FornaessLeeZhang}. The $W^2_s(\Omega)$ results hold more generally. For instance,
if $\Omega$ is a smoothly bounded pseudoconvex domain, \cite{Cat80} shows $f\in\co(\Omega)$ can be approximated by functions in $\co(\Omega)\cap W^2_s(\Omega)$. In \cite{BarFor86} an analogous result on $C^1$ bounded Hartogs domains in $\C^2$ is proved. See also \cite{Straube10}, Corollaries 5.2 and 5.4. Nevertheless, these results fail without boundary smoothness, see \cite{BarFor86}. This fact partially motivates our insertion of Bergman norms in (Q2). Finally, previous work directed at (Q3) has focused on establishing boundedness of the Bergman projection itself on increasingly wider -- but still smoothly bounded -- classes of domains, as mentioned below Proposition \ref{P:intro2} above. We are unaware of any prior work connected to (Q3) using operators other than the Bergman projection.

The results in the paper are arranged by decreasing generality of the underlying domain. In Section \ref{S:general}, $\Omega$ is a domain with no assumptions on its symmetry or boundary geometry. In some instances, 
$\Omega$ is assumed bounded. The arguments in this section are elementary, but the results apply widely and seem new.  Propositions \ref{P:intro1} and \ref{P:intro2} are slightly extended and established as Theorems \ref{T:surjectivity} and \ref{T:easy_approx}, respectively.
In Section \ref{S:Reinhardt}, bounded Reinhardt domains $\cR$ are considered. The Laurent series expansion of a holomorphic function on $\cR$ provides concrete initial candidates for addressing (Q2) via truncation. Calculation of norms of coefficient functionals related to $L^p$-allowable monomials (Proposition \ref{prop-coefficients}) and a principal value computation (Proposition \ref{P:BonA^p}) are the basic preliminary results. The main result is Theorem \ref{thm-schauder}, a relabeling of Theorem \ref{T:intro4} above. Additionally, Proposition \ref{P:intro1} is applied to give a detailed description about duality of 
$A^p$ on Reinhardt domains in Proposition \ref{P:ReinhardtDuals}.

In Section \ref{S:GenHartogs}, (Q1-3) are considered on the generalized Hartogs triangles studied in \cite{Edh16, EdhMcN16, EdhMcN16b}. The extra symmetries of this family of Reinhardt domains allow precise descriptions of $L^p$ allowable monomials, orthogonality relations, and integrability generally.  The main results are Theorem \ref{T:SubBergmanLpMapping} and Proposition \ref{T:TechnicalSubBergmanStatement}, which construct sub-Bergman projections that are $L^p$ bounded on ranges where $\bm{B}$ is not.   These results imply Theorem \ref{T:intro3}.  Precise versions of the earlier duality and approximation results are obtained in Proposition \ref{T:DualityGenHartogs} and Propositions \ref{T:ApproxGenHartogs1},  \ref{T:ApproxGenHartogs2}. Proposition \ref{T:SubBergmanMinimization} solves a minimization problem that answers a version of (Q3).

\section{Breakdown on the Hartogs triangle}\label{S:failure}

The breakdowns of function theory can be seen on the Hartogs triangle using results established later in the paper and in \cite{EdhMcN16b}. The needed results are referenced below, using notation collected in Section \ref{SS:notation}.

The Hartogs triangle is 
\begin{equation}\label{D:Hartogs}
\h := \left\{(z_1, z_2) \in \C^2: |z_1| < |z_2| < 1 \right\}.
\end{equation}
 In \cite{EdhMcN16b} and  \eqref{D:genHartogs} below, $\h$ is denoted $\h_1$ to indicate membership in a family of domains, but that is not needed here. Abbreviate the Bergman projection $\bm{B}_{\h}$ by $\bm{B}$ for the rest of this section.

Since $\h$ is Reinhardt, every $f\in\co\left(\h\right)$ has a unique Laurent expansion, written
$f(z)=\sum a_\alpha z^\alpha$ 
using standard multi-index notation. Since $z_2\neq 0$ on $\h$ but there are points in $\h$ where $z_1=0$, the summation is taken over
the set $\{\alpha = (\alpha_1,\alpha_2) \in \Z^2 : \alpha_1 \ge 0 \}$.
If $f\in A^p(\h)$, results in Section \ref{S:Reinhardt} show the Laurent expansion of $f$ need only be summed over the smaller set of $L^p$-allowable multi-indices, see \eqref{eq-allowable}.  Denote this set of indices $\cs(\h,L^p)$ -- caveat: this set was denoted $\ca^p_1$ in  \cite{EdhMcN16b}. Corollary \ref{cor-aalpha} implies
\begin{equation}\label{E:HartogsLaurentSeries}
f(z)=\sum_{\alpha\in\cs(\h,L^p)} a_\alpha z^\alpha\qquad\text{if } f\in A^p(\h).
\end{equation}

A special case of \cite[Theorem 1.1 and Remark 4.9]{EdhMcN16b} is

\begin{theorem}\label{T:AbsBergmanLpHartogs}
The absolute value of the Bergman projection $\left|\bm{B}\right|$ on $\h$ is  bounded from $L^p\left(\h\right)$ to $A^p\left(\h\right)$ if and only if $p \in \left(\frac 43, 4\right)$.
\end{theorem}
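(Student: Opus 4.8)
The plan is to reduce the two–variable estimate to a one–variable weighted problem on the disc by rationalizing $\h$. Let $\Phi(z_1,z_2)=(z_1/z_2,z_2)$, which maps $\h$ biholomorphically onto the product $\D\times\D^*$, where $\D^*=\D\setminus\{0\}$. Since deleting the slice $z_2=0$ does not affect $L^2$ holomorphic functions, the transformation rule for Bergman kernels, together with $B_{\D\times\D}(\zeta,\eta)=\pi^{-2}(1-\zeta_1\bar\eta_1)^{-2}(1-\zeta_2\bar\eta_2)^{-2}$, yields the explicit formula
\begin{equation*}
B_\h(z,w)=\frac{1}{\pi^2}\,\frac{z_2\bar w_2}{(z_2\bar w_2-z_1\bar w_1)^2\,(1-z_2\bar w_2)^2}.
\end{equation*}
First I would push the operator $\abs{\bm B}$ forward under $\Phi$. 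Writing $g=f\circ\Phi^{-1}$, using $dV(w)=\abs{\eta_2}^2\,dV(\eta)$, and absorbing the Jacobian weight by setting $\phi=\abs{\eta_2}^{2/p}g$ (so that $\norm{f}_{L^p(\h)}=\norm{\phi}_{L^p(\D\times\D^*)}$), the identity $z_2\bar w_2-z_1\bar w_1=\zeta_2\bar\eta_2(1-\zeta_1\bar\eta_1)$ collapses the singular factor and transfers $\abs{\bm B}$ to the integral operator $T$ on the \emph{unweighted} space $L^p(\D\times\D^*)$ whose kernel factors as a tensor product,
\begin{equation*}
K(\zeta,\eta)=\frac{1}{\pi^2}\cdot\frac{1}{\abs{1-\zeta_1\bar\eta_1}^2}\cdot\frac{\abs{\zeta_2}^{\beta}\abs{\eta_2}^{-\beta}}{\abs{1-\zeta_2\bar\eta_2}^2},\qquad \beta=\frac{2}{p}-1.
\end{equation*}

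Thus $T=T_1\otimes T_2$ on $L^p(\D)\otimes L^p(\D^*)$, where $T_1$ is the absolute Bergman operator of the disc and $T_2$ carries the weighted kernel in the second factor. Because both kernels are nonnegative and the domain is a product, the $L^p\to L^p$ norm factors, $\norm{T}=\norm{T_1}\,\norm{T_2}$ (the upper bound by iterated application of Minkowski's integral inequality, the lower bound by testing on tensors $\phi_1\otimes\phi_2$). The operator $T_1$ is bounded on $L^p(\D)$ for \emph{every} $1<p<\infty$ by the classical Forelli--Rudin estimates (a one–line Schur test on the disc). Hence the whole question is decided by the single–variable operator
\begin{equation*}
T_2\phi(\zeta_2)=\frac{1}{\pi}\int_{\D}\frac{\abs{\zeta_2}^{\beta}\abs{\eta_2}^{-\beta}}{\abs{1-\zeta_2\bar\eta_2}^2}\,\phi(\eta_2)\,dV(\eta_2),
\end{equation*}
and I would determine exactly for which $p$ this is bounded on $L^p(\D)$.

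For sufficiency I would run Schur's test on $T_2$ with the comparison function $h(\eta_2)=\abs{\eta_2}^{s}(1-\abs{\eta_2}^2)^{t}$. The factor $(1-\abs{\eta_2}^2)^t$ absorbs the usual Bergman singularity at $\abs{\zeta_2\bar\eta_2}=1$ and imposes nothing beyond $1<p<\infty$; the genuinely new constraints come from matching both Schur integrals at the origin, where the weights $\abs{\zeta_2}^{\beta}\abs{\eta_2}^{-\beta}$ live. Since $\beta\in(-\tfrac12,\tfrac12)$ precisely when $\tfrac43<p<4$, I expect the admissible window of exponents $(s,t)$ to be nonempty exactly on that range, giving boundedness. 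For necessity, restrict to $\phi$ supported in $\{\abs{\eta_2}<\tfrac12\}$ and evaluate on $\{\abs{\zeta_2}<\tfrac12\}$; there $\abs{1-\zeta_2\bar\eta_2}\approx1$, so $T_2$ is comparable to the rank–one operator $\phi\mapsto\abs{\zeta_2}^{\beta}\int\abs{\eta_2}^{-\beta}\phi\,dV$. This is $L^p$–bounded only if $\abs{\zeta_2}^{\beta}\in L^p$ and $\abs{\eta_2}^{-\beta}\in L^q$ near $0$, i.e. $\beta p>-2$ and $-\beta q>-2$, which fail exactly at $p=4$ and $p=\tfrac43$ respectively. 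Testing on functions concentrating at the origin then forces $\norm{T_2}=\infty$ for $p\le\tfrac43$ and $p\ge4$.

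I expect the necessity endpoints to be the delicate part: one must exhibit explicit extremizing functions (supported near $z_2=0$ after transporting back to $\h$) and confirm that the boundary part of $T_2$ cannot cancel the divergence produced by the origin weight. A convenient simplification is the symmetry $\abs{B_\h(z,w)}=\abs{B_\h(w,z)}$, which makes $\abs{\bm B}$ formally self–adjoint; its $L^p$ boundedness is then equivalent to its $L^q$ boundedness, so it suffices to analyze $p\ge2$ (yielding the upper endpoint $p=4$) and obtain the lower endpoint $p=\tfrac43$ by duality. The sufficiency half, by contrast, becomes a routine—if somewhat lengthy—Schur computation once the tensor factorization above is in hand.
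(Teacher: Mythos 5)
Your proposal is mathematically sound, but it takes a genuinely different route from the proof the paper relies on. The paper does not prove Theorem \ref{T:AbsBergmanLpHartogs} internally at all: it quotes \cite{EdhMcN16b}, where the estimates are carried out directly on $\h$ without any change of variables. In the language of Section \ref{S:GenHartogs}, the kernel formula \eqref{E:B_h_{k}} with $k=1$ shows that $\abs{B_{\h}(z,w)}$ is a kernel of type $A=1$ (Definition \ref{D:typeA} with $m=n=1$), and the Schur-test machinery of Proposition \ref{P:typeA_mapping} then gives boundedness of $\abs{\bm{B}}$ exactly on $\left(\frac43,4\right)$; unboundedness outside this window is obtained in \cite{EdhMcN16b} from explicit test functions (e.g.\ $\psi=\bar z_2$, whose projection is $Cz_2^{-1}\notin L^p$ for $p\ge4$, cf.\ Example \ref{E:FailureApprox3}) together with duality. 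You instead rationalize $\h$ onto $\D\times\D^*$, absorb the Jacobian into an isometry of $L^p$ spaces, and factor the conjugated positive kernel as a tensor product, reducing everything to the one-variable weighted operator $T_2$; this is in substance the approach of \cite{ChaZey16}, which the paper acknowledges (``Cf.\ also'') immediately after the theorem. Your computations check: the transferred kernel is exactly $\pi^{-2}\abs{1-\zeta_1\bar\eta_1}^{-2}\abs{\zeta_2}^{\beta}\abs{\eta_2}^{-\beta}\abs{1-\zeta_2\bar\eta_2}^{-2}$ with $\beta=\frac2p-1$, the two Schur conditions for $T_2$ admit a common test function precisely when $-\frac2p<\beta<\frac2q$, which is $\frac43<p<4$, and the rank-one restriction argument is legitimate because all kernels involved are positive, so restricting the kernel can only decrease the norm. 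What your route buys: it makes the source of the restricted range completely transparent (pure $L^p$/$L^q$ integrability of the weights $\abs{\zeta_2}^{\beta},\abs{\eta_2}^{-\beta}$ at the origin), and it gives necessity for $\abs{\bm{B}}$ softly, without computing any projection. What the paper's (cited) route buys: the type-$A$ estimates apply uniformly to every $\h_{m/n}$, where the rationalization trick no longer produces a clean tensor factorization, and that same machinery is reused in Section \ref{S:GenHartogs} to prove Theorem \ref{T:SubBergmanLpMapping} for the sub-Bergman kernels. Two small tightenings: you only need $\norm{T_1\otimes T_2}\le\norm{T_1}\,\norm{T_2}$ plus the product-function lower bound, not exact equality of norms; and, contrary to your closing worry, in your setup the endpoint necessity is the \emph{easy} half --- the longer half is the two-sided Schur computation, whose admissible window of exponents $s$ is nonempty exactly when $-\frac2p<\beta<\frac2q$.
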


\noindent Cf.  also \cite{ChaZey16, EdhMcN16}.

\subsection{Failure of representation}\label{SS:FailureRep} The dual space $A^p(\h)'$ is not isomorphic  to $A^q(\h)$ for $p\in\left(\frac 43, 2\right)$ and $q$ conjugate to $p$.
 This is  illustrated with the pair $p=\frac 53$ and $q=\frac 52$; the argument works with minor changes for any $p\in\left(\frac 43, 2\right)$.
 
 Before defining a functional on $A^{5/3}(\h)$, a computation is useful:
\begin{example}\label{E:FailureRep1}  The holomorphic function $h(z_1,z_2)= z_2^{-2} (= z_1^0z_2^{-2})$ satisfies

\begin{itemize}
\item[(i)] $h\in A^{5/3}\left(\h\right)$ and $h\notin A^2(\h)$.
\item[(ii)] $\bm{B}h$ is well-defined and $\bm{B}h \equiv 0$.
\end{itemize}
\end{example}

\begin{proof} Inequality (3.3) in  \cite{EdhMcN16b} or Lemma \ref{L:(p,m/n)-allowable MultiIndices and Norm} below shows that $(0,-2)\in \cs\left(\h, L^{5/3}\right)$ and
$(0,-2)\notin \cs\left(\h, L^{2}\right)$. Thus (i) holds.

Since $\frac 53\in \left(\frac 43, 4\right)$, Theorem \ref{T:AbsBergmanLpHartogs} says $|\bm{B}|$ is bounded on $L^{5/3}(\h)$.
It follows from Proposition \ref{P:BonA^p} that $\bm{B} h$ is well-defined and $\bm{B}h\equiv 0$.
\end{proof}

A non-representable functional is now given using the coefficients in \eqref{E:HartogsLaurentSeries}. 
\begin{example}\label{ex:nonrep1}
The coefficient functional
\[ a_{(0,-2)}:A^{5/3}(\h)\to\C\]
assigning to $f\in A^{5/3}(\h)$ the coefficient of $z_2^{-2}$ in its Laurent expansion is bounded on $A^{5/3}(\h)$.  However, there does {\it not} exist $\phi \in A^{5/2}(\h)$ such that
\[ a_{(0,-2)}(f)=  \langle{f, \phi}\rangle_{\h} .\]
\end{example}
\begin{proof} Uniqueness of the Laurent expansion shows the functional $a_{(0,-2)}$ is well-defined.
Boundedness of $a_{(0,-2)}$ follows from Proposition \ref{prop-coefficients}.

To prove non-representability,  let $h(z) = z_2^{-2}\in\co(\h)$ as above. 
Example \ref{E:FailureRep1} says $h\in A^{5/3}\left(\h\right)$  but $h\notin A^{2}\left(\h\right)$.
Since $(0,-2)\notin \cs\left(\h, L^{2}\right)$, Corollary \ref{C:ExtendedOrthogonalityPairing} shows that for all $g\in A^2(\h)$ 
\begin{equation}\label{eq-rep3}
 \left\langle h, g\right\rangle_\h =0.
 \end{equation}
The fact that $a_{(0,-2)}$ cannot be represented by $\left\langle \cdot, \phi\right\rangle_\h$ for some $\phi\in A^{5/2}(\h)$ is now straightforward. Suppose such a representation held. Note $a_{(0,-2)} (h) =1$ by definition. Since $A^{5/2}(\h)\subset A^2(\h)$, \eqref{eq-rep3}   implies $\left\langle h, \phi\right\rangle_\h =0$ for all $ \phi\in A^{5/2}(\h)$, a contradiction.
\end{proof}

\subsection{Failure of approximation on $A^p$}\label{SS:negative_holo_approx} There are functions $f\in A^{5/3}(\h)$ for which no sequence of functions $f_n\in A^2(\h)$ converges to $f$ in the $L^{5/3}$ norm. As in the previous subsection, minor changes in the argument give an analogous result for any $p\in\left(\frac 43, 2\right)$.
\begin{proposition}\label{P:FailureApprox}
$A^2(\h)$ is not dense in $A^{5/3}(\h)$.
\end{proposition}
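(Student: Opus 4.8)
The plan is to produce a single explicit function of $A^{5/3}(\h)$ that cannot lie in the $L^{5/3}$-closure of $A^2(\h)$, and to detect this obstruction with one continuous linear functional that ``sees'' the function but annihilates all of $A^2(\h)$. The natural candidate is $h(z)=z_2^{-2}$ from Example \ref{E:FailureRep1}, and the natural functional is the coefficient functional $a_{(0,-2)}$ from Example \ref{ex:nonrep1}. The underlying principle is elementary: a bounded functional on $A^{5/3}(\h)$ that is nonzero at $h$ but identically zero on $A^2(\h)$ certifies that $h$ is separated from $\overline{A^2(\h)}$.

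First I would assemble the facts already in hand. By Example \ref{E:FailureRep1}(i) we have $h\in A^{5/3}(\h)$, and by definition the coefficient of $z^{(0,-2)}=z_1^0z_2^{-2}$ in $h$ equals $1$, so $a_{(0,-2)}(h)=1$. By Example \ref{ex:nonrep1} (via Proposition \ref{prop-coefficients}) the functional $a_{(0,-2)}$ is bounded, hence continuous, on $A^{5/3}(\h)$.

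The central step is to show that $a_{(0,-2)}$ annihilates $A^2(\h)$. Since $\h$ is bounded, $A^2(\h)\subset A^{5/3}(\h)$, so $a_{(0,-2)}$ restricts to a functional on $A^2(\h)$, and for $g\in A^2(\h)$ its value is read off from the Laurent expansion \eqref{E:HartogsLaurentSeries}. For $p=2$ that expansion ranges only over $\cs(\h,L^2)$ by Corollary \ref{cor-aalpha}, and the allowability computation of Example \ref{E:FailureRep1} shows $(0,-2)\notin\cs(\h,L^2)$. Consequently the coefficient of $z_2^{-2}$ in any $g\in A^2(\h)$ vanishes, i.e.\ $a_{(0,-2)}(g)=0$ for all $g\in A^2(\h)$.

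The conclusion then follows by contradiction. Suppose a sequence $f_n\in A^2(\h)$ satisfied $\norm{f_n-h}_{5/3}\to 0$. Continuity of $a_{(0,-2)}$ on $A^{5/3}(\h)$ would force $a_{(0,-2)}(f_n)\to a_{(0,-2)}(h)=1$, while the previous step gives $a_{(0,-2)}(f_n)=0$ for every $n$. This contradiction shows $h$ is not in the $L^{5/3}$-closure of $A^2(\h)$, so $A^2(\h)$ is not dense in $A^{5/3}(\h)$. The only real obstacle is the middle step, and it is entirely a matter of index bookkeeping, namely that $(0,-2)$ is $L^{5/3}$-allowable but not $L^2$-allowable together with the fact that $L^p$-allowability governs which monomials can occur in the Laurent series of an $A^p$ function; everything else is the standard separation argument.
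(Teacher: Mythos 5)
Your proof is correct and follows essentially the same route as the paper: both use the bounded coefficient functional $a_{(0,-2)}$, the function $h(z)=z_2^{-2}\in A^{5/3}(\h)\setminus A^2(\h)$, and Corollary~\ref{cor-aalpha} to show $a_{(0,-2)}$ vanishes on $A^2(\h)$ yet equals $1$ at $h$, contradicting density via continuity. The only difference is cosmetic (you argue with an explicit approximating sequence rather than invoking vanishing on the closure directly), so there is nothing to add.
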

\begin{proof}Let $a_{(0,-2)} \in A^{5/3}(\h)'$ and $h\in A^{5/3}\left(\h\right) \setminus A^{2}\left(\h\right) $ be as in the previous section. By Corollary~\ref{cor-aalpha}, since  $(0,-2)\notin \cs\left(\h, L^{2}\right)$, $a_{(0,-2)}$ vanishes on the  linear subspace $A^2(\Omega)$ of
$A^{5/3}\left(\h\right) $. If $A^{2}\left(\h\right) $ were dense in $ A^{5/3}\left(\h\right)  $, continuity would imply $a_{(0,-2)}\equiv 0$ on $A^{5/3}\left(\h\right)$. However, $a_{(0,-2)}\left(h\right)=1$, which contradicts this vanishing. 
\end{proof}
In fact a stronger statement is  true: there are functions in $A^{5/3}(\h)$  that cannot be approximated {\it uniformly on compact subsets of} $\,\h$ by functions in $A^2(\h)$. To see this, suppose that  $\{f_n\}$ is a sequence in $ A^2(\h)$  such that $f_n \to h$
uniformly on compact subsets of $\h$. Recall the  Cauchy representation of  a coefficient of a Laurent series:
\[ a_{(0,-2)}(f)=   \frac{1}{(2\pi i)^2} \int_T \frac{f(\zeta)}{\zeta_2^{-2}} \cdot\frac{d\zeta_1}{\zeta_1} \frac{d\zeta_2}{\zeta_2},
\]
 where $T$ is a  torus contained in $\h$, for example $ \{(z_1,z_2): |z_1| = \frac 14, |z_2| = \frac 12 \} \subset \h$. Since $f_n \to h$ uniformly on $T$ as $n\to \infty$, it follows that
   $a_{(0,-2)} (f_n) \to 1= a_{(0,-2)}(h)$ as $n\to \infty$. This is a contradiction,  since Corollary~\ref{cor-aalpha} $a_{(0,-2)} (f_n) =0$ for  each $n$.

\subsection{Failure of approximation on $L^p$} For $p\geq 4$, there are explicit functions $g\in L^p(\h)$ such that $\bm{B}g\notin A^p(\h)$. Note that $L^p(\h)\subset L^2(\h)$ for this range of $p$, so $\bm{B}g$ is well-defined.
As $g\longrightarrow\bm{B}g$ associates the $L^2$-nearest {\it holomorphic} function to a {\it general} $g$, this is a different failure of approximation than in the previous section.

Since Theorem \ref{T:AbsBergmanLpHartogs} says there does not exist $C$ such that $\|\bm{B}f\|_p\leq C\|f\|_p$ for all $f\in L^p$, the uniform boundedness principle implies the existence of such $g$. But the explicit form of
such ``extremal functions'' (though non-unique) is useful for other purposes. The proofs in  \cite{EdhMcN16b, EdhMcN16} actually show

\begin{example}\label{E:FailureApprox3} On $\h$, let $\psi(z_1, z_2)=\bar z_2$. Then $\bm{B}\psi\notin L^p(\h)$ for any $p\geq 4$.
\end{example}

\begin{proof}
The proof of Proposition 5.1 in \cite{EdhMcN16b} shows that $\bm{B}\psi = Cz_2^{-1}$, for a constant $C \neq0$. An elementary computation in polar coordinates (see Lemma \ref{L:(p,m/n)-allowable MultiIndices and Norm} below) shows that 
$z_2^{-1}\notin L^p(\h)$ if $p\geq 4$.
\end{proof}

Since $\psi\in L^\infty(\h)$, thus in $L^p(\h)$ for all $p>0$, Example \ref{E:FailureApprox3} demonstrates the breakdown mentioned above. 
In \cite{ChaZey16}, the range of $\bm{B}$ acting on $L^p(\h)$ for any $p>4$ is identified as a weighted Bergman space.

\section{General domains}\label{S:general}

\subsection{Notation}\label{SS:notation} Recurring notation and terminology is collected for easy reference.

If $\Omega\subset\C^n$, $\co\left(\Omega\right)$ denotes the set of holomorphic functions on $\Omega$.
The ordinary $L^2$ inner product is written
$\left\langle f, g\right\rangle_\Omega =\int_\Omega f\cdot \bar g\, dV$
where $dV$ is Lebesgue measure. 
For $p>0$, let $\|f\|_p=\left(\int_\Omega |f|^p\, dV\right)^{\frac 1p}$ denote the usual $p$-th power integral; when $p\geq 1$ this defines a norm. $L^p\left(\Omega\right)$ is the class of $f$ with $\|f\|_p <\infty$ and powers $p,q$ satisfying $\frac 1p +\frac 1q=1$ are said to be conjugate. 
The Bergman spaces are $A^p(\Omega) =\co\left(\Omega\right)\cap L^p\left(\Omega\right)$.  

The Bergman projection and kernel are denoted 

\begin{equation}\label{E:BergmanProjDef}
\bm{B}_\Omega f(z)=\int_\Omega B_\Omega(z,w) f(w)\, dV(w),\qquad f\in L^2(\Omega).
\end{equation}
If ambiguity is unlikely, $\bm{B}_\Omega$ is shortened to ${\bm B}$. When the integral in \eqref{E:BergmanProjDef} converges, it is taken as the {\it definition} of
$\bm{B}f$, even if $f\notin L^2(\Omega)$. If $\{\phi_{\alpha}\}_{\alpha \in \ca}$ is an orthonormal basis for $A^2(\Omega)$, the Bergman kernel is  

\begin{equation}\label{E:BergmanInfiniteSum}
B_\Omega(z,w) = \sum_{\alpha \in \ca}\phi_{\alpha}(z)\overline{\phi_{\alpha}(w)}.
\end{equation}

A domain $\cR\subset\C^n$ is called Reinhardt if $(z_1, \dots z_n)\in\cR$ implies $\left(e^{i\theta_1} z_1, \dots, e^{i\theta_n} z_n \right)\in\cR$ for all $(\theta_1, \dots ,\theta_n)\in\R^n$.
If $X$ is a normed linear space, $X'$ will denote its dual space, the set of bounded linear maps $X\to\C$. For $\lambda\in X'$, the standard norm 
$\|\lambda\|_{X'}=\sup\left\{\left|\lambda (f)\right|: \|f\|_X=1\right\}$ is used.

Some notational shorthand is used in Section \ref{S:GenHartogs}.  If $D$ and $E$ are functions depending on several parameters, $D\lesssim E$ means there exists a 
constant $K>0$, independent of specified (or clear) parameters, such that $D\leq K\cdot E$. Finally, 
if $x\in\R$, the floor function $\lfloor x\rfloor$ denotes the greatest integer $\leq x$.

\subsubsection{Two auxiliary operators}\label{SSS:basic} Two operators related to $\bm{P}:L^2(\Omega)\longrightarrow A^2(\Omega)$ given by  \eqref{E:generalOp} occur in hypotheses of results below. The operator $\left|\bm{P}\right|$ is defined
\begin{equation}\label{E:absoluteOp}
\left|\bm{P}\right|f(z)=\int_\Omega |P(z,w)| f(w)\, dV(w)
\end{equation}
where $|P(z,w)|$ denotes absolute value. The triangle inequality shows that if  $\left|\bm{P}\right|$ satisfies (H1), then  $\bm{P}$ does as well. The converse does not necessarily hold.
The operator $\bm{P}^\dagger$ is defined
\begin{equation}\label{E:pseudoOp}
\bm{P}^\dagger f(w)=\int_\Omega \overline{P(z,w)} f(z)\, dV(z).
\end{equation}
Note $\left\langle\bm{P}f, g\right\rangle =\left\langle f, \bm{P}^\dagger g\right\rangle$ holds when Fubini's theorem can be applied, so $\bm{P}^\dagger$ is the formal adjoint of $\bm{P}$.

\subsection{Extending the Bergman projection}\label{SS:extending}
If $\Omega\subset\C^n$ is bounded, $L^{t}(\Omega) \subset L^{s}(\Omega)$ for any $1\leq  s <t$. Thus for $p\geq 2$, $f\in L^p(\Omega)$ implies that $\bm{B}f\in A^2(\Omega)$ and is given by the integral \eqref{E:BergmanProjDef}.
To restate a point in Section \ref{SS:notation},
$\int_\Omega B(z,w) f(w) dV(w)$
is taken as the definition of $\bm{B}f$, whenever the integral converges.
For $p<2$ and $f\in L^p(\Omega)$, this integral does not necessarily converge. Even when it converges, directly determining the size of the integral is difficult -- it is therefore desirable to evaluate $\bm{B}f$ as a limit. 

\subsubsection{Boundedness of the kernel} Various hypotheses on $\Omega$ guarantee convergence of \eqref{E:BergmanProjDef}  for 
$f\in L^p(\Omega)$, $p<2$. For example, let $U \subset \C$ be the unit disc and fix $z\in U$. Then for $f\in L^1\left(U\right)$,
\begin{align*}
\left|\int_U B_{U}(z,w)\, f(w)\, dV(w)\right|=& \left|\frac 1\pi\int_U\frac 1{(1-z\bar w)^2}\, f(w)\, dV(w)\right| 
\leq C_z\,\,\int_U|f(w)|\, dV(w) <\infty.
\end{align*}
Here $C_z=\sup_{w\in U} \left| B_U(z,w)\right|<\infty$,  since $z \in U$ is fixed.
This argument works on a $C^{\infty}$ smoothly bounded strongly pseudoconvex domain \cite{Kerzman72} or more generally on a smoothly bounded pseudoconvex  domain of finite type \cite{Bell86}. 

But the argument fails for the domains $\h_{\gamma}$ defined by \eqref{D:genHartogs}.  Consider $\h_k$ for $k\in\Z^+$ to illustrate. Let $B(z,w)= B_{\h_k}(z_1, z_2,w_1, w_2)$ denote the Bergman kernel. Theorem 1.2 of \cite{Edh16} says
\begin{equation}\label{E:B_h_{k}}
B(z,w) = \frac{p_k(z_1\bar w_1)\cdot\left[ \left(z_2\bar w_2\right)^2 +\left(z_1\bar w_1\right)^k\right] +z_2\bar w_2\cdot q_k\left(z_1\bar w_1\right) }{(1-z_2\bar w_2)^{2}(z_2\bar w_2 -z_1^k\bar w_1^{k})^{2}},
\end{equation}
for explicit polynomials $p_k(s), q_k(s)$ of the complex variable $s$. Two crucial facts are that $p_k(0)=0$ and $q_k(0)\neq 0$.  Let $z=(z_1, z_2)\in \h_k$ be a fixed point (note $z_2\neq 0$) and $w_\delta=(0, \delta)$, $\delta >0$, be a point in $\h_k$ on the $z_2$ axis. Then \eqref{E:B_h_{k}} implies
$B\left(z,w_\delta\right) \approx  \frac{1}{\delta}$.
Letting $\delta\to 0$ shows $B(z, \cdot)\notin L^\infty(\h_k)$.

Other arguments are required to show $\bm{B}$ is defined on $L^p$ for $p<2$ on domains like $\h_{m/n}$.
In \cite{EdhMcN16b}, estimates on $\left|B_{m/n}(z,w)\right|$ and a variant of Schur's test show $|\bm{B}|$ is defined (and bounded) on $L^p(\h_{m/n})$ for an interval of $p<2$; see Theorem \ref{T:BergmanLp} below.  

\medskip

\subsubsection{Limits of exhaustions}\label{SS:principal_value} If $\left|\bm{B}\right|$ is bounded on $L^p(\Omega)$, the integral \eqref{E:BergmanProjDef} is finite. Computing $\bm{B}f$ can be done  as a principal value, a consequence of the following fact:

\begin{proposition}\label{P:pv1} Let $\Omega$ be a domain in $\C^n$. Suppose $\bm{P}$ is an operator of the form \eqref{E:generalOp} such that
$|\bm{P}|$ is bounded on $L^p(\Omega)$ for a given $1<p<\infty$. For $t\in (0,1)$, let $\Omega_t\subset\Omega$ such that 
if $t< t'$, then $\Omega_{t'} \subset\Omega_t$, and $\displaystyle{\bigcup_{t\in (0,1)}\Omega_t = \Omega}$.  

Then if $f\in L^p\left(\Omega\right)$, for almost every $z\in\Omega$ 
\begin{equation}\label{eq-pv1}
\bm{P}f(z)=\lim_{t\to 0} \int_{\Omega_t} P(z,w) f(w)\, dV(w).
\end{equation}
\end{proposition}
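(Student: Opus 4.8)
The strategy is to reduce \eqref{eq-pv1} to a pointwise application of the dominated convergence theorem, with all the genuine content residing in the verification that the defining integral converges \emph{absolutely} for almost every $z$. This absolute convergence is exactly what the hypothesis on $|\bm{P}|$ supplies, and is precisely the reason the assumption is phrased in terms of $|\bm{P}|$ rather than $\bm{P}$ itself.

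First I would note that $|f|\in L^p(\Omega)$ whenever $f\in L^p(\Omega)$, so the $L^p$ boundedness of $|\bm{P}|$ gives $|\bm{P}|\big(|f|\big)\in L^p(\Omega)$; in particular the function
$$ z \longmapsto \int_\Omega |P(z,w)|\,|f(w)|\, dV(w) $$
is finite for almost every $z\in\Omega$. Fix such a $z$. Then $w\mapsto P(z,w)f(w)$ is integrable on $\Omega$, so both $\bm{P}f(z)$ and each truncated integral are well-defined, and the nonnegative function $g_z(w):=|P(z,w)|\,|f(w)|$ is an $L^1(\Omega)$ majorant of the full integrand. This single use of the hypothesis is the crux of the argument; everything after it is elementary measure theory at the fixed point $z$.

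For this $z$ I would write the error as the tail integral
$$ \bm{P}f(z) - \int_{\Omega_t} P(z,w)f(w)\, dV(w) = \int_{\Omega\setminus\Omega_t} P(z,w)f(w)\, dV(w), $$
whose modulus is at most $\int_{\Omega\setminus\Omega_t} g_z\, dV$. The nesting hypothesis forces $\Omega\setminus\Omega_t$ to shrink as $t\to 0$, while $\bigcup_{t}\Omega_t=\Omega$ gives $\bigcap_t(\Omega\setminus\Omega_t)=\emptyset$; hence the indicator of $\Omega\setminus\Omega_t$ tends to $0$ pointwise in $w$, and dominated convergence against $g_z\in L^1(\Omega)$ sends the tail to $0$. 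I do not expect a serious obstacle here: the only technical wrinkle is that $t\to 0$ ranges over a continuum rather than a sequence. Since $\int_{\Omega\setminus\Omega_t} g_z\, dV$ is monotone in $t$, its limit may be evaluated along any sequence $t_n\downarrow 0$ (for which $\bigcup_n\Omega_{t_n}=\Omega$ still holds by the same reasoning), which reduces matters to the ordinary sequential theorem. This establishes \eqref{eq-pv1} at $z$, and therefore at almost every $z\in\Omega$.
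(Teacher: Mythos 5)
Your proof is correct and follows essentially the same route as the paper's: both use the $L^p$ boundedness of $|\bm{P}|$ to conclude that $|P(z,\cdot)|\,|f(\cdot)|\in L^1(\Omega)$ for almost every $z$, then apply the dominated convergence theorem at each such fixed $z$ (the paper dominates $\chi_t P(z,\cdot)f$ directly, while you equivalently estimate the tail over $\Omega\setminus\Omega_t$). Your extra remark reducing the continuum limit $t\to 0$ to sequential limits via monotonicity is a small refinement of a point the paper passes over silently, but it does not change the substance of the argument.
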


\begin{proof} Let $f\in L^p(\Omega)$. The hypothesis on $|\bm{P}|$ says

$$\int_\Omega\left\{\left|\int_\Omega \left|P(z,w)\right| |f(w)|\, dV(w)\right|^p\right\}\, dV(z)\leq C\|f\|^p_p.$$
In particular, for a.e. $z\in \Omega$, the quantity $\{\cdot\}$ above is $<\infty$.  Thus $|P(z,\cdot)|\, |f(\cdot)|\in L^1(\Omega)$ for a.e. $z\in\Omega$.

 Let $\chi_t$ be the indicator function of $\Omega_t$. Note
$\left|  \chi_t (w)P(z,w) \right| \abs{f(w)} \leq \abs{P(z,w)}\abs{ f(w)}$ for any $z\in\Omega$.
 Fix $z$ such that $|P(z,\cdot)|\, |f(\cdot)|\in L^1(\Omega)$.  The dominated convergence theorem 
implies  
\begin{align*}
\lim_{t\to 0}  \left\langle P(z,\cdot), \bar f\right\rangle_{\Omega_t} = \lim_{t\to 0}  \left\langle \chi_t\cdot P(z,\cdot), \bar f\right\rangle_{\Omega} =   \left\langle \lim_{t\to 0}\chi_t\cdot P(z,\cdot), \bar f\right\rangle_{\Omega}
= \bm{P}f(z),
\end{align*}
 as claimed.
\end{proof}

\subsection{Consequences of (H1)}\label{SS:consequences}

Two functional analysis results are derived from assumptions about $L^p$ boundedness of the Bergman projection. Conditions (H1) and (H2), defined below \eqref{E:generalOp}, enter the hypotheses and
conclusions respectively.

\subsubsection{ (H2) and density}

\begin{lemma}\label{L:H2density}  Let $\Omega$ be a domain in $\C^n$. Assume
$\bm{B}$ is bounded on $L^p(\Omega)$ for a given $1<p<\infty$. 

The following statements are equivalent:

\begin{itemize}
\item[(i)] $A^2(\Omega)\cap A^p(\Omega)$ is dense in $A^p(\Omega)$.
\smallskip
\item[(ii)] $\bm{B}h = h\quad\forall h\in A^p(\Omega)$.
\end{itemize}
\end{lemma}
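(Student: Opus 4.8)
The plan is to prove both implications by purely functional-analytic arguments, resting on two structural facts about $\bm{B}$ under the standing assumption that it is bounded on $L^p(\Omega)$. The first is the reproducing property on $A^2$: since $\bm{B}$ is the orthogonal projection of $L^2(\Omega)$ onto $A^2(\Omega)$, one has $\bm{B}h=h$ for every $h\in A^2(\Omega)$, in particular for every $h\in A^2(\Omega)\cap A^p(\Omega)$. The second is that $\bm{B}$ carries the subspace $L^2(\Omega)\cap L^p(\Omega)$ into $A^2(\Omega)\cap A^p(\Omega)$: if $g\in L^2(\Omega)\cap L^p(\Omega)$, then $\bm{B}g\in A^2(\Omega)$ because $\bm{B}$ is the $L^2$ projection, while $\bm{B}g\in A^p(\Omega)$ because $\bm{B}$ is bounded on $L^p$ and its output is holomorphic. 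Here $L^2(\Omega)\cap L^p(\Omega)$ is dense in $L^p(\Omega)$ since it contains the compactly supported functions, which are dense for $1<p<\infty$. Both directions will then follow by combining these facts with the $L^p$-boundedness of $\bm{B}$ and uniqueness of $L^p$ limits.

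For the implication (i) $\Rightarrow$ (ii), I would fix $h\in A^p(\Omega)$ and use (i) to choose $h_n\in A^2(\Omega)\cap A^p(\Omega)$ with $h_n\to h$ in $L^p$. Each $h_n$ lies in $A^2(\Omega)$, so $\bm{B}h_n=h_n$ by the reproducing property. Boundedness of $\bm{B}$ on $L^p$ gives $\bm{B}h_n\to\bm{B}h$ in $L^p$, while $\bm{B}h_n=h_n\to h$; uniqueness of $L^p$ limits then forces $\bm{B}h=h$, which is (ii).

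For (ii) $\Rightarrow$ (i), I would fix $f\in A^p(\Omega)$ and choose $g_n\in L^2(\Omega)\cap L^p(\Omega)$ with $g_n\to f$ in $L^p$. By the second structural fact, $\bm{B}g_n\in A^2(\Omega)\cap A^p(\Omega)$; by $L^p$-boundedness $\bm{B}g_n\to\bm{B}f$ in $L^p$; and by (ii), $\bm{B}f=f$. Hence $f$ is the $L^p$-limit of the sequence $\bm{B}g_n$ drawn from $A^2(\Omega)\cap A^p(\Omega)$, which proves the density in (i).

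The one point requiring care — and the only place where the possibility $p<2$ really enters — is the consistency of the two meanings of $\bm{B}g$ for $g\in L^2(\Omega)\cap L^p(\Omega)$: the $L^2$-orthogonal projection versus the value of the $L^p$-bounded extension. When $p\ge 2$ on a bounded domain this is automatic from $L^p(\Omega)\subset L^2(\Omega)$, but in general one must check that the $L^p$-bounded operator agrees on the overlap with the $L^2$ projection. This will follow from the fact that both are realized by the same integral kernel: by Proposition \ref{P:pv1} the $L^p$ value is the principal-value limit $\lim_{t\to0}\int_{\Omega_t}B_\Omega(z,w)g(w)\,dV(w)$ over an exhaustion, and for $g\in L^2(\Omega)$ this limit recovers the $L^2$ projection. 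I expect this verification, rather than the two limiting arguments, to be the main (though minor) obstacle.
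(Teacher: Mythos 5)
Your proof is correct and takes essentially the same route as the paper's: both directions use exactly the same approximating sequences ($h_n\in A^2(\Omega)\cap A^p(\Omega)$ for (i)~$\Rightarrow$~(ii), and $\bm{B}g_n$ with $g_n\in L^2(\Omega)\cap L^p(\Omega)$ for (ii)~$\Rightarrow$~(i)) together with $L^p$-continuity of $\bm{B}$. The consistency issue in your last paragraph does not actually arise under the paper's conventions, since $\bm{B}f$ is \emph{defined} as the integral against the Bergman kernel whenever that integral converges, so for $g\in L^2(\Omega)\cap L^p(\Omega)$ the ``$L^p$ value'' and the orthogonal projection are literally the same integral; note also that your proposed fix via Proposition \ref{P:pv1} would require the strictly stronger hypothesis that $\left|\bm{B}\right|$ (not just $\bm{B}$) is bounded on $L^p(\Omega)$.
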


\begin{proof} Assume (i). Then for each $h\in A^p(\Omega)$, there is a sequence $\{h_\nu\}\subset A^2(\Omega)\cap A^p(\Omega)$ such that $h_\nu\to h$ in $A^p(\Omega)$. Since 
$\bm{B}$ is assumed continuous on $L^p(\Omega)$, $\bm{B}h_\nu\to \bm{B}h$. However $\bm{B}h_\nu=h_\nu$, since $h_\nu\in A^2(\Omega)$. Thus, $\bm{B}h=h$.

Assume (ii). Let $h\in A^p(\Omega)$. Since $L^2(\Omega)\cap L^p(\Omega)$ is dense in $L^p(\Omega)$, there exist $g_\nu\in L^2(\Omega)\cap L^p(\Omega)$ such that
$g_\nu\to h$ in $L^p$. Set $h_\nu =\bm{B}g_\nu$. Then $h_\nu\in A^2(\Omega)\cap A^p(\Omega)$ and 
$$h_\nu\to \bm{B}h,$$
since $\bm{B}$ is $L^p$ bounded. As $\bm{B}h=h$ by assumption, (i) holds.
\end{proof}

As mentioned in the Introduction, if $\Omega\subset\C^n$ is a smoothly bounded and pseudoconvex, $\co(\Omega)\cap C^\infty\left(\overline\Omega\right)$ is dense in $A^p(\Omega)$ for all $p\in (1,\infty)$, cf. \cite{Cat80}. Thus (i) holds in this case. Note this density fails in Proposition \ref{P:FailureApprox}. Note also that if $p\geq 2$ and $\Omega$ is any {\it bounded} domain, conditions (i) and (ii) are both trivially satisfied. 

\subsubsection{Generalized self-adjointness} The Bergman projection $\bm{B}$ is self-adjoint on $A^2(\Omega)$: $\left\langle\bm{B}f,g\right\rangle = \left\langle f,\bm{B}g\right\rangle$ if $f,g\in L^2(\Omega)$. This does not automatically imply that $\left\langle\bm{B}f,g\right\rangle = \left\langle f,\bm{B}g\right\rangle$ if $f\in L^p(\Omega)$, $g\in L^q(\Omega)$ for general
conjugate exponents $p$ and $q$. 

However this relation holds when $|\bm{B}|$ satisfies (H1), a consequence of the following general result.

\begin{proposition}\label{P:gen_self_adjoint}
Let $\Omega\subset\C^n$ be a domain. Assume there exists an operator $\bm{P}$ of form \eqref{E:generalOp} and that
 $|\bm{P}|$ is bounded on $L^p(\Omega)$ for a given $1<p<\infty$. Let $q$ be conjugate to $p$.

Then

\begin{itemize}
\item[(i)] $\left|\bm{P}^\dagger\right|$ is bounded on $L^q(\Omega)$.
\smallskip
\item[(ii)] $\left\langle\bm{P}f,g\right\rangle= \left\langle f,\bm{P}^\dagger g\right\rangle\qquad\forall\,\, f\in L^p(\Omega), g\in L^q(\Omega)$.
\end{itemize}
\end{proposition}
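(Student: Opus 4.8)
The plan is to prove (i) by a duality argument and then use (i) to justify the Fubini interchange that yields (ii).

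For (i), I would first reduce to non-negative inputs. Since the kernel $|P(z,w)|$ of $\left|\bm{P}^\dagger\right|$ is non-negative, the pointwise bound $\bigl|\,\left|\bm{P}^\dagger\right| g\,\bigr| \leq \left|\bm{P}^\dagger\right|(|g|)$ shows it suffices to estimate $\bigl\|\left|\bm{P}^\dagger\right| g\bigr\|_q$ for $g \geq 0$. For non-negative $f$ and $g$ every integrand is non-negative, so Tonelli's theorem applies without qualification and gives the adjoint relation $\langle\left|\bm{P}\right| f, g\rangle = \langle f, \left|\bm{P}^\dagger\right| g\rangle$; here the reality of the kernel $|P(z,w)|$ is precisely what makes $\left|\bm{P}^\dagger\right|$ the formal transpose of $\left|\bm{P}\right|$. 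I would then invoke the $L^p$--$L^q$ duality of norms: for $h = \left|\bm{P}^\dagger\right| g \geq 0$ one has $\|h\|_q = \sup\{\langle h, f\rangle : f \geq 0,\ \|f\|_p \leq 1\}$, the supremum being attained along non-negative test functions. Combining the adjoint relation with the hypothesis gives $\langle \left|\bm{P}^\dagger\right| g, f\rangle = \langle g, \left|\bm{P}\right| f\rangle \leq \|g\|_q\,\bigl\|\left|\bm{P}\right| f\bigr\|_p \leq C\|g\|_q\|f\|_p$, and taking the supremum over the unit ball of $L^p$ yields $\bigl\|\left|\bm{P}^\dagger\right| g\bigr\|_q \leq C\|g\|_q$, which is (i).

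For (ii), the point is that (i) together with the hypothesis makes the relevant double integral absolutely convergent. For $f \in L^p(\Omega)$ and $g \in L^q(\Omega)$, Tonelli's theorem rewrites the iterated integral of absolute values as a single pairing, and Hölder's inequality then bounds it:
$$\int_\Omega \int_\Omega |P(z,w)|\,|f(w)|\,|g(z)|\, dV(w)\, dV(z) = \bigl\langle \left|\bm{P}\right|(|f|),\, |g|\bigr\rangle \leq \bigl\|\left|\bm{P}\right|(|f|)\bigr\|_p\,\|g\|_q \leq C\|f\|_p\|g\|_q < \infty.$$
With absolute convergence established, Fubini's theorem permits interchanging the order of integration in $\langle \bm{P} f, g\rangle = \iint P(z,w) f(w)\overline{g(z)}\, dV(w)\, dV(z)$; carrying the conjugate through the inner integral then identifies the result as $\langle f, \bm{P}^\dagger g\rangle$.

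The main obstacle is the justification of the Fubini interchange in (ii): the kernel $P(z,w)$ is complex and carries no a priori integrability against arbitrary $L^p$ and $L^q$ functions. The entire argument rests on dominating the double integral by the non-negative kernel $|P(z,w)|$, and it is exactly the $L^p$-boundedness of $\left|\bm{P}\right|$ (the hypothesis) and of $\left|\bm{P}^\dagger\right|$ (part (i)) that furnishes this control. Once the finiteness estimate above is in place, both conclusions follow by routine manipulation.
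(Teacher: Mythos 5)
Your proof is correct and takes essentially the same route as the paper's: Tonelli's theorem plus H\"older's inequality plus the $L^p$-boundedness of $\left|\bm{P}\right|$ yields (i) by taking a supremum over the $L^p$ unit ball, and the resulting absolute convergence of the double integral licenses the Fubini interchange in (ii). The only difference is that you spell out two steps the paper leaves implicit --- the pointwise reduction $\bigl|\,\left|\bm{P}^\dagger\right| g\,\bigr| \leq \left|\bm{P}^\dagger\right|(|g|)$ and the duality characterization of the $L^q$ norm along non-negative test functions --- which is a welcome bit of extra care.
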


\begin{proof} Let $f\in L^p(\Omega), g\in L^q(\Omega)$. Tonelli's theorem implies
\begin{align*}
 \left\langle \left|\bm{P}\right| |f|, |g|\right\rangle =\int_{\Omega}\int_{\Omega}\left| P(z,w)\right|\, \left|g(z)\right|\, \left|f(w)\right|\, dV(w)\, dV(z) = \left\langle  |f|, \left|\bm{P}^\dagger\right||g|\right\rangle. 
\end{align*}
H\" older's inequality and boundedness of $|\bm{P}|$ on $L^p$ yield
\begin{align*}
\left\langle  |f|, \left|\bm{P}^\dagger\right||g|\right\rangle= \left\langle \left|\bm{P}\right| |f|, |g|\right\rangle 
\leq C\, \|f\|_p\, \|g\|_q
\end{align*}
Taking the supremum over $\|f\|_p=1$ shows $\left\|\left|\bm{P}^\dagger\right|g\right\|_q\leq C\|g\|_q$ as claimed.

Fubini's theorem now applies to give (ii):
\begin{align*}
\left\langle\bm{P}f,g\right\rangle &= \int_{\Omega}f(w)\left(\int_{\Omega} P(z,w)\overline{g(z)}\, dV(z)\right)\, dV(w) \\
&= \int_{\Omega}f(w)\overline{\left(\int_{\Omega} \overline{P(z,w)} g(z)\, dV(z)\right)}\, dV(w) 
=\left\langle f,\bm{P}^\dagger g\right\rangle.
\end{align*}
\end{proof}

\begin{remark} The Bergman kernel is conjugate symmetric, $\overline{B(z,w)} = B(w,z)$. Thus if
$|\bm{B}|$ is $L^p$ bounded, (ii) says $\left\langle\bm{B}f,g\right\rangle = \left\langle f,\bm{B}g\right\rangle$ for $f\in L^p(\Omega)$,  $g\in L^q(\Omega)$.
\end{remark}

\subsection{Representing $A^p(\Omega)'$ by $A^q(\Omega)$.}\label{SS:GenDuality} 
The sought for representation is through $L^2$ pairing.
For $1<p<\infty$ define the conjugate-linear map 
\begin{equation}\label{eq-phig}
  \Phi_p(g)\big(f\big) =\int_\Omega f\ol{g}\, dV,\qquad g\in A^q,\, f\in A^p.
\end{equation}
 Hölder's inequality implies $\Phi_p$ maps $A^q(\Omega)$ continuously into  $A^p(\Omega)'$.  
 
 The goal is to understand when $\Phi_p$ is surjective. The preliminary results  hold generally.
 
 \subsubsection{General behavior}\label{SSS:GenBehavior}

\begin{proposition}\label{P:GenDuality}
Let $\Omega\subset \cx^n$ be a bounded domain and $1<p<\infty$. 
\begin{enumerate}
\item[(i)] If  $p\leq 2$, then $\Phi_p$ is injective. 
\item[(ii)] If $p\geq 2$, then $\Phi_p$ has dense image in $A^p(\Omega)'$.
\end{enumerate} 
\end{proposition}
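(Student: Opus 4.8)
The plan is to deduce both statements from the nesting of $L^p$ spaces on a bounded domain, which lets one substitute a function into $\Phi_p$ paired against its own conjugate. Recall that for bounded $\Omega$ one has $L^t(\Omega)\subset L^s(\Omega)$ whenever $s\leq t$, and hence $A^t(\Omega)\subset A^s(\Omega)$ for $s\leq t$. The whole argument rests on the observation that pairing a function against itself computes its squared $L^2$-norm, combined with choosing the regime ($p\leq 2$ or $p\geq 2$) so that the relevant inclusion makes this self-pairing legitimate.

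For part (i), I would assume $p\leq 2$, so $q\geq 2\geq p$, and take $g\in A^q(\Omega)$ with $\Phi_p(g)=0$; that is, $\int_\Omega f\,\overline{g}\,dV=0$ for every $f\in A^p(\Omega)$. Since $q\geq p$, the nesting gives $g\in A^q(\Omega)\subset A^p(\Omega)$, so $g$ is itself an admissible test function. Taking $f=g$ then yields $\int_\Omega |g|^2\,dV=0$ (the integral is finite because $q\geq 2$ forces $g\in A^2(\Omega)$), whence $g\equiv 0$. This gives injectivity of $\Phi_p$.

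For part (ii), I would assume $p\geq 2$, so $q\leq 2\leq p$, and first record that $A^p(\Omega)$ is a closed subspace of $L^p(\Omega)$: an $L^p$-convergent sequence of holomorphic functions converges locally uniformly, so its limit is holomorphic. Since $1<p<\infty$, $L^p(\Omega)$ is reflexive, and therefore so is its closed subspace $A^p(\Omega)$; in particular $A^p(\Omega)''\cong A^p(\Omega)$ under the canonical embedding. By the Hahn--Banach density criterion, density of the (complex-linear) subspace $\Phi_p\big(A^q(\Omega)\big)$ in $A^p(\Omega)'$ is equivalent to the assertion that every element of $A^p(\Omega)''=A^p(\Omega)$ annihilating it vanishes. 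Concretely, I would let $f_0\in A^p(\Omega)$ satisfy $\Phi_p(g)(f_0)=\int_\Omega f_0\,\overline{g}\,dV=0$ for all $g\in A^q(\Omega)$; since $p\geq q$, nesting gives $f_0\in A^p(\Omega)\subset A^q(\Omega)$, so $f_0$ is an admissible choice of $g$, and taking $g=f_0$ forces $\int_\Omega |f_0|^2\,dV=0$, hence $f_0\equiv 0$.

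The argument is short, and the only steps requiring care are in part (ii): the reflexivity reduction, which depends on $A^p(\Omega)$ being closed in $L^p(\Omega)$ and on the reflexivity of $L^p$ for $1<p<\infty$, and the bookkeeping of which inclusion among the $A^p(\Omega)$ is available in each regime. I do not expect a genuine obstacle beyond correctly invoking Hahn--Banach together with the identification $A^p(\Omega)''\cong A^p(\Omega)$; once that is in place, both parts collapse to the single fact that a function paired against its own conjugate returns its squared $L^2$-norm.
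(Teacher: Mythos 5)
Your proof is correct. Part (i) is exactly the paper's argument: a kernel element $g\in A^q(\Omega)$ lies in $A^p(\Omega)$ because $\Omega$ is bounded and $q\ge 2\ge p$, so testing $\Phi_p(g)$ against $g$ itself yields $\norm{g}_{L^2(\Omega)}^2=0$.

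Part (ii) reaches the same conclusion by a more direct route than the paper. The paper first notes that $\Phi_q\colon A^p(\Omega)\to A^q(\Omega)'$ is injective (part (i) with the exponents swapped), then invokes the abstract fact that the transpose of an injective operator has dense range, together with reflexivity of $A^q(\Omega)$, and proves the explicit factorization $\Phi_p=\cc\circ\Phi_q'\circ\varepsilon$, where $\varepsilon$ is the evaluation isomorphism of $A^q(\Omega)$ onto its bidual and $\cc$ is conjugation on $A^p(\Omega)'$. You instead apply the Hahn--Banach density criterion directly: by reflexivity of $A^p(\Omega)$ (closed subspace of the reflexive space $L^p(\Omega)$), any functional on $A^p(\Omega)'$ annihilating the image of $\Phi_p$ is evaluation at some $f_0\in A^p(\Omega)$, and the annihilation condition $\int_\Omega f_0\ol{g}\,dV=0$ for all $g\in A^q(\Omega)$ collapses to $f_0\equiv 0$ by the same self-pairing trick, since $A^p(\Omega)\subset A^q(\Omega)$. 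The two arguments are the same mathematics in different packaging: your annihilation condition says precisely that $f_0\in\ker\Phi_q$, so your computation silently re-proves part (i) with the roles of $p$ and $q$ exchanged, which is exactly what the paper's transpose machinery encodes (indeed, the standard proof that an injective operator has weak-$*$ dense transpose range is your Hahn--Banach argument). Your version is more self-contained: no transpose, no evaluation map, no cited lemma, and it uses reflexivity of $A^p$ rather than of $A^q$. What the paper's formulation buys is the factorization identity itself, which is reused in Corollary \ref{C:GenDuality} to conclude that $\Phi_q$ is injective with closed range whenever $\Phi_p$ is surjective; your inlined argument would not furnish that identity. Finally, your explicit check that the image of the conjugate-linear map $\Phi_p$ is still a complex-linear subspace (because $\lambda\Phi_p(g)=\Phi_p(\bar\lambda g)$) is a point the paper leaves implicit, and it is needed for the Hahn--Banach criterion you apply.
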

\begin{proof} Let $q$ be the conjugate exponent to $p$. 

For part (i),  suppose that $g\in \ker \Phi_p$; note in particular that $g\in A^q(\Omega)$. Since $p\leq 2$, it follows that $p\leq 2\leq q$, which implies $A^q(\Omega)\subset A^p(\Omega)$. Therefore $g\in A^p(\Omega)$ and
$\Phi_p(g)$ can act on $g$:
$$0=\Phi_p(g)\big(g\big)=\norm{g}_{L^2(\Omega)}^2.$$ Thus $g=0$. 

Consider part (ii). Since $p\geq 2$, necessarily $q\leq 2$. By  part (i),
the map $\Phi_q: A^p(\Omega)\to A^q(\Omega)'$ is injective.  Define the transpose $\Phi_q': (A^q(\Omega)')' \to A^p(\Omega)'$ of $\Phi_q$ 
\[ \Phi_q'(\lambda)(f)= \lambda(\Phi_q f), \quad \lambda\in (A^q(\Omega)')',\ \ f \in A^p(\Omega).\]
Since $\Phi_q$ is injective, the transposed map $\Phi_q'$ has dense image; see \cite{LaxBook}. 

$L^q(\Omega)$ is reflexive; since $A^q(\Omega)\subset L^q(\Omega)$ is closed, $A^q(\Omega)$ is also reflexive. Thus 
the evaluation map $\varepsilon: A^q(\Omega)\to (A^q(\Omega)')'$
defined 
\[ \varepsilon(g)\big(\phi\big)= \phi(g), \quad \phi\in A^q(\Omega)', g\in A^q(\Omega), \]
is an isometric isomorphism. Let $\cc: A^p(\Omega)'\to A^p(\Omega)' $ be the conjugation map defined $(\cc\circ \lambda)(g)= \ol{ \lambda(g)}$; $\cc$ is an antilinear isometric isomorphism of $A^p(\Omega)'$ with itself. To complete the proof of part (ii) it suffices to show
\begin{equation}\label{eq-phipfactoring}
\Phi_p = \cc\circ \Phi_q'\circ \varepsilon, 
\end{equation}
since $\varepsilon$ and $\cc$ are isometric isomorphisms and $\Phi_q'$ has dense image. 

For $f\in A^p(\Omega)$, $g\in A^q(\Omega)$, unraveling yields
\begin{align*}
(\cc\circ \Phi_q'\circ \varepsilon)(g)\big(f\big)= \ol{\Phi_q'(\varepsilon(g))\big(f\big)}
= \ol{\varepsilon(g)\big(\Phi_q f\big)}&= \ol{\left(\Phi_q f\right)\big(g\big)}\\ &=\ol{\int_\Omega g\ol{f}dV}=\int_\Omega f \ol{g}dV= \Phi_p(g)\big(f\big),
\end{align*}
which establishes \eqref{eq-phipfactoring}. 
\end{proof}
Proposition \ref{P:GenDuality} shows $\Phi_p$ is generally {\em almost} surjective.  To show
it is actually surjective would require establishing closed range. This is equivalent  to an estimate of the form
\begin{equation*}
\norm{\Phi_p g}_{A^p(\Omega)'} \gtrsim {\rm dist}(g, \ker \Phi_p),
\end{equation*}
for all $g\in A^q(\Omega)$, where $\ker \Phi_p$ denotes the null space of $\Phi_p$. 

The proof of Proposition \ref{P:GenDuality}  yields the following. Representation \eqref{eq-phipfactoring} is used for the second statement.
\begin{corollary}\label{C:GenDuality}
Let $\Omega \subset \C^n$ be a bounded domain. Suppose the map $\Phi_p:A^q(\Omega)\to A^p(\Omega)'$ is surjective for a given $1<p<\infty$. Let $q$ be conjugate to $p$.  

Then there is a natural identification
\begin{equation*} 
A^p(\Omega)' \cong  \frac{A^q(\Omega)}{\ker \Phi_p}.
\end{equation*}
Furthermore, the map
\begin{equation*}
\Phi_q: A^p(\Omega) \to A^q(\Omega)'
\end{equation*}
is injective and has closed range.

\end{corollary}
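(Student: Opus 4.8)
The plan is to derive both assertions purely formally from Banach space duality, exploiting the factorization \eqref{eq-phipfactoring} exactly as the sentence preceding the statement advertises; no new estimates are needed, only the open mapping and closed range theorems together with the reflexivity of $A^q(\Omega)$ already recorded in the proof of Proposition \ref{P:GenDuality}.

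For the identification $A^p(\Omega)'\cong A^q(\Omega)/\ker\Phi_p$, I would first note that boundedness of $\Phi_p$ makes $\ker\Phi_p$ a closed subspace of $A^q(\Omega)$, so the quotient $A^q(\Omega)/\ker\Phi_p$ is again a Banach space. The map $\Phi_p$ then descends to a bounded bijection $\widetilde\Phi_p:A^q(\Omega)/\ker\Phi_p\to A^p(\Omega)'$, which is onto because $\Phi_p$ is assumed surjective and one-to-one because its kernel has been quotiented out. An appeal to the open mapping theorem upgrades $\widetilde\Phi_p$ to a topological isomorphism, yielding the stated natural identification. The conjugate-linearity of $\Phi_p$ is immaterial here, since a bounded bijection of Banach spaces is open whether read as complex-linear or conjugate-linear.

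For the second assertion I would invoke $\Phi_p=\cc\circ\Phi_q'\circ\varepsilon$ from \eqref{eq-phipfactoring}. Here $\varepsilon$ is the evaluation isomorphism furnished by reflexivity of $A^q(\Omega)$ and $\cc$ is conjugation; both are isometric (anti)isomorphisms, so surjectivity of $\Phi_p$ forces surjectivity of $\Phi_q'$. The crucial observation is that $\Phi_q'$, defined by $\Phi_q'(\lambda)(f)=\lambda(\Phi_q f)$, is precisely the transpose of $\Phi_q:A^p(\Omega)\to A^q(\Omega)'$. I would then cite the closed range theorem in the form stating that a bounded operator $T$ between Banach spaces is injective with closed range if and only if its transpose is surjective (equivalently, $T$ is bounded below); applied to $T=\Phi_q$ this gives immediately that $\Phi_q$ is injective and has closed range.

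The only delicate point, which I regard as the main (and rather minor) obstacle, is the bookkeeping forced by conjugate-linearity: $\Phi_q$ is conjugate-linear and the cited closed range theorem is stated for linear operators, so I must verify that injectivity, surjectivity, and closedness of range are all preserved under the isometric conjugations $\cc$ appearing in \eqref{eq-phipfactoring} and under the reflexivity isomorphism $\varepsilon$. Since each of these is an isometric (anti)isomorphism, none of the relevant properties is disturbed, and regarding the spaces as real Banach spaces lets the abstract theorem apply verbatim. Once this is checked, both conclusions follow as formal consequences of the surjectivity hypothesis.
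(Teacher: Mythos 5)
Your proposal is correct and follows essentially the same route as the paper: the quotient identification via the open mapping theorem applied to the induced bijection, and the second assertion via the factorization \eqref{eq-phipfactoring} together with the closed range theorem (transpose surjective implies the operator is injective with closed range), with reflexivity of $A^q(\Omega)$ supplying $\varepsilon$. Your explicit handling of the conjugate-linearity bookkeeping, by passing to real Banach spaces, is a point the paper leaves implicit but is exactly the right way to justify it.
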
 

\subsubsection{Surjectivity of $\Phi_p$}\label{SSS:surjectivePhi} Surjectivity of $\Phi_p$ follows from existence of an operator satisfying (H1) and (H2) whose formal adjoint maps into $\co(\Omega)$.

\begin{theorem}\label{T:surjectivity}
Let $\Omega \subset \C^n$ be a bounded domain. Let $1<p<\infty$ be given and $q$ be the conjugate exponent of $p$ . 

Suppose there exists $\bm{P}$ of the form  \eqref{E:generalOp}  and $\cg\subseteq A^p(\Omega)$ such that

\begin{itemize}
\item[(i)] $\left|\bm{P}\right|$ is bounded on $L^p(\Omega)$,
\smallskip
\item[(ii)] $\bm{P}F=F\quad\forall F\in\cg$,
\smallskip
\item[(iii)]  $Ran\left(\bm{P}^\dagger\right)\subset A^q(\Omega)$.
\end{itemize}
Then $\Phi_p: A^q(\Omega)\longrightarrow \cg'$ is surjective.
\end{theorem}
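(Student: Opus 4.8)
The plan is to represent an arbitrary functional $\lambda\in\cg'$ first by an $L^q$ density, via soft functional analysis, and then to \emph{upgrade} that density to an element of $A^q(\Omega)$ by applying $\bm{P}^\dagger$, using the reproducing property (ii) and generalized self-adjointness to check that the action on $\cg$ is unchanged.

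First I would fix $\lambda\in\cg'$. Since $\cg$ is a linear subspace of $L^p(\Omega)$, the Hahn--Banach theorem extends $\lambda$ to a bounded linear functional on all of $L^p(\Omega)$. Because $1<p<\infty$, the Riesz representation of $\bigl(L^p(\Omega)\bigr)'$ produces an element $h\in L^q(\Omega)$ with $\lambda(f)=\int_\Omega f\,\overline{h}\,dV=\langle f,h\rangle_\Omega$ for every $f\in\cg$; here one takes the conjugate of the standard Riesz density so that the pairing matches the conjugate-linear definition of $\Phi_p$. At this stage $\lambda$ is represented by an $L^q$ function that is not yet holomorphic.

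Next I would set $g:=\bm{P}^\dagger h$. Hypothesis (i) together with Proposition \ref{P:gen_self_adjoint}(i) guarantees $\bigl|\bm{P}^\dagger\bigr|$ is bounded on $L^q(\Omega)$, so $g$ is a genuinely well-defined element of $L^q(\Omega)$ (not merely a formal integral); hypothesis (iii) then places $g\in A^q(\Omega)$. It remains to verify $\Phi_p(g)=\lambda$ on $\cg$. For $f\in\cg$ I compute $\Phi_p(g)\big(f\big)=\langle f,g\rangle_\Omega=\langle f,\bm{P}^\dagger h\rangle_\Omega$, and apply Proposition \ref{P:gen_self_adjoint}(ii) (legitimate since $f\in L^p(\Omega)$ and $h\in L^q(\Omega)$) to rewrite this as $\langle\bm{P}f,h\rangle_\Omega$. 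Hypothesis (ii) gives $\bm{P}f=f$, so this equals $\langle f,h\rangle_\Omega=\lambda(f)$. Hence $\Phi_p(g)=\lambda$ as elements of $\cg'$, and surjectivity follows.

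The conceptual heart --- the only step using the structure of $\bm{P}$ rather than general duality theory --- is the chain $\langle f,\bm{P}^\dagger h\rangle_\Omega=\langle\bm{P}f,h\rangle_\Omega=\langle f,h\rangle_\Omega$: the first equality is generalized self-adjointness, whose hypothesis is precisely the $L^p$-boundedness of $\bigl|\bm{P}\bigr|$ assumed in (i), and the second is the reproducing property (ii). I expect the main point requiring care is ensuring every pairing is licit, namely that $\bm{P}^\dagger$ acts on the Riesz density $h$ as a bona fide $L^q$ function before any Fubini-type interchange; this is exactly why the $L^q$-boundedness of $\bigl|\bm{P}^\dagger\bigr|$ from Proposition \ref{P:gen_self_adjoint}(i) must be invoked prior to manipulating the integrals.
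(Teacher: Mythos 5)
Your proposal is correct and follows essentially the same route as the paper's own proof: Hahn--Banach extension of $\lambda$ to $L^p(\Omega)$, Riesz representation by an $L^q$ density, application of $\bm{P}^\dagger$ to land in $A^q(\Omega)$ via hypothesis (iii), and the chain $\langle f,\bm{P}^\dagger h\rangle=\langle\bm{P}f,h\rangle=\langle f,h\rangle$ using Proposition \ref{P:gen_self_adjoint} and the reproducing property (ii). Your added remark that Proposition \ref{P:gen_self_adjoint}(i) ensures $\bm{P}^\dagger h$ is a bona fide $L^q$ function before any pairing is a point the paper leaves implicit, but it is the same argument.
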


\begin{remark} (a) The case $\cg=A^p(\Omega)$ is included in Theorem \ref{T:surjectivity}.
\smallskip

(b) If $\bm{P}=\bm{B}_\Omega$, hypothesis (iii) is a consequence of (i) by Proposition \ref{P:gen_self_adjoint}.
\end{remark}

\begin{proof}
Let $\lambda\in \cg'$. We want to find a $h\in A^q(\Omega)$, such that $\lambda =\Phi_p(h)$. 
Extend $\lambda$ by the Hahn-Banach theorem to a functional on $L^p(\Omega)$, still denoted $\lambda$, with the same norm. Then there is a $g\in L^q(\Omega)$, 
with $\norm{g}_{L^q}=\|{\lambda}\|_{\left(L^p\right)'}$, such that $\lambda (f) = \int_\Omega f\ol{g}\, dV=\langle f,g\rangle$ for all $f\in L^p(\Omega)$.

Let $h=\bm{P}^\dagger g$; by (iii) $h\in A^q(\Omega)$. Then  for $F\in\cg$ 
\begin{align*}
\Phi_p (h)\big(F\big)&=\langle F,h\rangle= \left\langle F, \bm{P}^\dagger g\right\rangle 
=\left\langle \bm{P}F, g\right\rangle =\langle F,g\rangle= \lambda(F).
\end{align*}
The third equality follows from Proposition \ref{P:gen_self_adjoint}, the fourth follows from  (ii).
\end{proof}

An elementary necessary condition for surjectivity of $\Phi_p$ is worth recording.

\begin{proposition}\label{P:necessary_surjective}
Let $\Omega \subset \C^n$ be a bounded domain. Suppose that for some $p$, $1<p< 2$, $A^2(\Omega)\cap A^p(\Omega)$ is not dense in $A^p(\Omega)$. Then
$\Phi_p$ is not surjective.
\end{proposition}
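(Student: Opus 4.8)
The plan is to argue by contrapositive via a Hahn--Banach separation, exploiting the fact that for this range of exponents the candidate representing functions all live \emph{inside} the subspace that is assumed to be non-dense. The decisive observation is that since $1<p<2$, the conjugate exponent satisfies $q>2$, and because $\Omega$ is bounded we have the nesting of Lebesgue spaces $L^t(\Omega)\subset L^s(\Omega)$ for $s\le t$ noted in Section~\ref{SS:extending}. Applying this with the pairs $(2,q)$ and $(p,q)$ gives the inclusion $A^q(\Omega)\subset A^2(\Omega)\cap A^p(\Omega)$. Thus any $g\in A^q(\Omega)$ that could represent a functional through $\Phi_p$ already belongs to $A^2(\Omega)\cap A^p(\Omega)$.

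Next I would manufacture a functional that witnesses the density failure. Let $M$ denote the closure of $A^2(\Omega)\cap A^p(\Omega)$ inside $A^p(\Omega)$. By hypothesis $M\subsetneq A^p(\Omega)$, so $M$ is a proper closed subspace of the Banach space $A^p(\Omega)$. By the Hahn--Banach theorem there exists $\lambda\in A^p(\Omega)'$ with $\lambda\neq 0$ and $\lambda\big|_M=0$.

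Finally I would derive a contradiction from surjectivity. Suppose $\Phi_p$ were surjective; then there is some $g\in A^q(\Omega)$ with $\Phi_p(g)=\lambda$. By the first paragraph $g\in A^q(\Omega)\subset M$, so on one hand $\lambda(g)=0$. On the other hand, unraveling the definition \eqref{eq-phig} gives
\[
\lambda(g)=\Phi_p(g)\big(g\big)=\int_\Omega g\,\ol{g}\,dV=\norm{g}_{L^2(\Omega)}^2 .
\]
Hence $\norm{g}_{L^2(\Omega)}^2=0$, forcing $g=0$ and therefore $\lambda=\Phi_p(g)=0$, which contradicts $\lambda\neq 0$. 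Consequently $\Phi_p$ cannot be surjective.

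There is no genuinely hard analytic step here; the argument is a clean separation argument, and the only points requiring care are bookkeeping ones. The crux is the structural remark that, for $1<p<2$, the representers produced by $\Phi_p$ are automatically $L^2$ functions lying in the very subspace $A^2(\Omega)\cap A^p(\Omega)$ whose non-density is assumed, so that the self-pairing $\lambda(g)=\norm{g}_{L^2(\Omega)}^2$ is forced to vanish. One should also verify that $\lambda$ may legitimately be evaluated at $g$, which is immediate since $g\in A^q(\Omega)\subset A^p(\Omega)$.
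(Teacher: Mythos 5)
Your proof is correct and follows essentially the same route as the paper's: a Hahn--Banach functional vanishing on the non-dense subspace, combined with the observation that any representing $g\in A^q(\Omega)$ lies in $A^2(\Omega)\cap A^p(\Omega)$ (since $q>2$ and $\Omega$ is bounded), so the self-pairing $\Phi_p(g)(g)=\norm{g}_{L^2(\Omega)}^2$ forces $g=0$ and hence a contradiction. The only cosmetic difference is that you pass through the closure $M$ and the inclusion $A^q\subset A^2\cap A^p$ explicitly, whereas the paper notes $A^2(\Omega)\subset A^p(\Omega)$ and applies the same argument directly to $A^2(\Omega)$.
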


\begin{proof}
Since $\Omega$ is bounded, $A^2(\Omega)\subset A^p(\Omega)$. The hypothesis thus says that $A^2(\Omega)$ is not dense in $A^p(\Omega)$. By the Hahn-Banach theorem, there exists a non-trivial $\psi\in A^p(\Omega)'$ which vanishes on $A^2(\Omega)$. Let $q$ be the conjugate exponent of $p$. Suppose there were a non-trivial function $g\in A^q(\Omega)$ such that
  $\psi(h)=\int_\Omega h\bar g\, dV\,\,\forall h\in A^p(\Omega)$. Since $q>2$, $g\in A^2(\Omega)$ and $\psi$ acts on $g$. But then
  $0=\psi(g) =\int_\Omega |g|^2 \, dV,$
 contradicting the fact $g$ is not identically zero.
\end{proof}

\subsection{Approximation on $A^p(\Omega)$}\label{S:ApproximationAp} Functions in $A^p(\Omega)$, $1<p<2$, can be approximated by functions in $A^2(\Omega)$ if (H1) and (H2) hold. The next result should be compared with
Proposition \ref{P:FailureApprox}.

\begin{theorem}\label{T:easy_approx}
Let $\Omega \subset \C^n$ be a domain.  For a given $1<p< 2$, suppose there exists an operator $\bm{P}$ of the form \eqref{E:generalOp}  and $\cg\subseteq A^p(\Omega)$ such that

\begin{itemize}  
\item[(i)] $\bm{P}$ is bounded on $L^p(\Omega)$.
\smallskip
\item[(ii)] $\bm{P}h=h\quad\forall h\in \cg$.
\end{itemize}

\noindent Then every $f\in \cg$ can be approximated in the $L^p$ norm by a sequence $f_n\in A^2(\Omega)$.

\end{theorem}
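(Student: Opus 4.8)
The plan is to mirror the proof of the implication (ii) $\Rightarrow$ (i) in Lemma \ref{L:H2density}, now run with the general operator $\bm{P}$ in place of $\bm{B}$ and with the subspace $\cg$ in place of all of $A^p(\Omega)$. The idea is to approximate $f$ in $L^p$ by functions that simultaneously lie in $L^2$, push these through $\bm{P}$ so that they land in $A^2(\Omega)$, and then use $L^p$-continuity of $\bm{P}$ together with the reproducing property to recover $f$ in the limit.

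First I would fix $f\in\cg$ and select a sequence $g_\nu\in L^2(\Omega)\cap L^p(\Omega)$ with $g_\nu\to f$ in $L^p(\Omega)$. Such a sequence exists because simple functions supported on sets of finite measure are dense in $L^p(\Omega)$ for $1\le p<\infty$, and each such simple function is bounded with finite-measure support, hence also lies in $L^2(\Omega)$. I emphasize this step does not require $\Omega$ to be bounded, which matters since here $\Omega$ is only assumed to be a domain, so one cannot simply invoke an inclusion $L^p\subset L^2$.

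Next I would set $f_\nu=\bm{P}g_\nu$. Since $g_\nu\in L^2(\Omega)$ and every operator of the form \eqref{E:generalOp} is bounded from $L^2(\Omega)$ into $A^2(\Omega)$, each $f_\nu$ belongs to $A^2(\Omega)$. Because $g_\nu\in L^p(\Omega)$ as well, hypothesis (i) guarantees that $\bm{P}$ is continuous on $L^p(\Omega)$ and that the $L^2$ and $L^p$ realizations of $\bm{P}g_\nu$ coincide, both being given by the single integral \eqref{E:generalOp}. Invoking linearity together with the reproducing property (ii), which yields $\bm{P}f=f$, I would estimate
\[
\norm{f_\nu-f}_p=\norm{\bm{P}g_\nu-\bm{P}f}_p=\norm{\bm{P}(g_\nu-f)}_p\le C\,\norm{g_\nu-f}_p\longrightarrow 0 ,
\]
so that $f_\nu\to f$ in $L^p$, as desired.

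Because the argument is a straightforward chaining of density, the $L^2\to A^2$ mapping property, and $L^p$-continuity, I do not expect a serious obstacle. The only points requiring genuine care are the density of $L^2\cap L^p$ in $L^p$ on a possibly unbounded domain, handled via simple functions rather than a set inclusion, and the consistency of the two realizations of $\bm{P}$ on $L^2\cap L^p$, which is immediate from the common integral formula \eqref{E:generalOp}.
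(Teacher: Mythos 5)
Your proposal is correct and follows essentially the same argument as the paper: the paper approximates $f$ in $L^p$ by $\phi_n\in C_c^{\infty}(\Omega)$ (which automatically lie in $L^2(\Omega)$), sets $f_n=\bm{P}\phi_n\in A^2(\Omega)$, and concludes via $\norm{f_n-f}_p=\norm{\bm{P}(\phi_n-f)}_p\lesssim\norm{\phi_n-f}_p$, exactly as you do. Your only deviation, using simple functions with finite-measure support instead of test functions as the dense subclass of $L^p(\Omega)\cap L^2(\Omega)$, is an immaterial variant of the same idea.
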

\begin{proof}
Since $f\in \cg \subset L^p(\Omega)$, there exists a sequence $\phi_n\in C_c^{\infty}(\Omega)$ such that $\norm{\phi_n - f}_p \to 0$ as $n\to \infty$.  Letting $f_n := \bm{P}\phi_n$, hypotheses (i) and (ii) give
\begin{align*}
\norm{f_n-f}_p = \left\|{\bm{P}(\phi_n - f)}\right\|_p \lesssim \norm{\phi_n - f}_p.
\end{align*}
Since $\bm{P}:L^2(\Omega)\longrightarrow A^2(\Omega)$, the claimed result holds.
\end{proof}

\begin{remark}  $\Omega$ is not assumed to be bounded in Theorem \ref{T:easy_approx}.
\end{remark}


\section{Reinhardt Domains}\label{S:Reinhardt}

Throughout the section, let $\cR\subset\C^n$ be a bounded Reinhardt domain. The monograph \cite{JarPflBook08} contains extensive information about this class of domains.

\subsection{Integration on Reinhardt domains}\label{SS:ReinhardtIntegration}
Denote by $\abs{\cR}$  the subset of $(\rl^+ \cup \{0\})^n$ defined
\begin{equation*}\label{eq-shadow}
\abs{\cR} = \left\{\left( \abs{z_1},\dots, \abs{z_n}\right) :\, z=(z_1,\dots, z_n)\in \cR\right\},
\end{equation*}
and call this set the {\em Reinhardt shadow} of $\cR$.

For $r\in \abs{\cR}$ and $f$ a continuous function on $\cR$, let $f_r$ be the function on the unit torus $\tb^n=\{\abs{z_j}=1, \,\,\text{for } j=1,\dots ,n\}\subset \cx^n$ defined
$f_r\left(e^{i\theta_1},\dots, e^{i\theta_n}\right)= f\left(r_1e^{i\theta_1},\dots, r_n e^{i\theta_n}\right)$.
Abbreviate this relation by 
\[ f_r(e^{i\theta})= f\left(r_1e^{i\theta_1},\dots, r_n e^{i\theta_n}\right),\]
using vector notation on $r$ and $\theta$. Fubini's theorem implies 

\begin{equation}\label{eq-lpnorm}
\norm{f}_{L^p(\cR)}^p =  \int_{\abs{\cR}} \norm{f_r}^p_{L^p(\tb^n)} r_1r_2\dots r_n dr,
\end{equation}
a form of polar coordinate integration on $\cR$.

\subsection{Holomorphic monomials}\label{SS:HoloMonoReinhardt}
For a multi-index $\alpha\in \mathbb{Z}^n$, 
let $e_\alpha$ denote the monomial function of exponent $\alpha$: $e_\alpha(z)= z^\alpha=z_1^{\alpha_1}\cdots z_n^{\alpha_n},\  z\in \cx^n$.
If $f\in \mathcal{O}(\cR)$, then $f$ has a unique Laurent series expansion 
\begin{equation}\label{eq-laurent}
f = \sum_{\alpha\in \zb^n} a_\alpha(f) e_\alpha
\end{equation}
converging uniformly on compact subsets of $\cR$.
The map
\begin{equation} \label{eq-coefficients}
a_\alpha: \mathcal{O}(\cR)\to \cx
\end{equation}
will be called the $\alpha$-th  {\em coefficient functional}. The uniqueness of the Laurent expansion shows the map $a_\alpha$ is well-defined. 
Cauchy's formula shows $a_\alpha$ is continuous in the natural Fréchet topology of $\mathcal{O}(\Omega)$.

\subsection{The coefficient functionals}\label{SS:CoefficientFunctionals}
In this section, expansion \eqref{eq-laurent} of an $f\in A^p(\cR)$ is shown to consist only of monomials in $A^p(\cR)$.  For $1\leq p\leq \infty$, define the set $\cs(\cR,L^p)$ of {\em $L^p$-allowable multi-indices for $\cR$} by 
\begin{equation}\label{eq-allowable}
\cs(\cR,L^p) := \{\alpha \in \zb^n: \ e_\alpha\in A^p(\cR)\}.
\end{equation}
These were defined  in  \cite{EdhMcN16b}. See \cite{Zwonek99, Zwonek00} connecting such sets to measurements on $\log |\cR|$.
Since $\cR$ is bounded, for $p_1< p_2$ it holds that
$ \cs\left(\cR,L^{p_2}\right)\subset \cs\left(\cR,L^{p_1}\right)$.

\begin{proposition}\label{prop-coefficients}
For each $\alpha\in \cs\left(\cR,L^p\right)$ and $1\leq p\leq \infty$, the coefficient functional
  \[ a_\alpha: A^p(\cR)\to \cx\]
  is bounded. Moreover
  $ \norm{{a_\alpha}}_{A^p(\cR)'} = \frac{1}{\norm{e_\alpha}_{L^p(\cR)}}$.
\end{proposition}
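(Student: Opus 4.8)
The plan is to prove the two assertions --- boundedness of $a_\alpha$ and the exact norm formula --- simultaneously, by exhibiting $a_\alpha$ as an explicit integral against a fixed function and then identifying the sharp constant. The key observation is that on a Reinhardt domain the coefficient $a_\alpha(f)$ can be extracted by integrating $f$ against the conjugate monomial $\overline{e_\alpha}$, because distinct monomials are orthogonal with respect to the $L^2$ pairing on $\cR$: since the angular integral $\int_{\tb^n} e^{i(\alpha-\beta)\cdot\theta}\,d\theta$ vanishes unless $\alpha=\beta$, the polar-coordinate formula \eqref{eq-lpnorm} forces $\langle e_\beta, e_\alpha\rangle_\cR = 0$ for $\beta\neq\alpha$ and $=\norm{e_\alpha}_{L^2(\cR)}^2$ when $\beta=\alpha$.

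First I would record the representation $a_\alpha(f)=\dfrac{1}{\norm{e_\alpha}_{L^2(\cR)}^2}\int_\cR f\,\overline{e_\alpha}\,dV$, valid a priori for any $f\in\co(\cR)$ whose Laurent series can be integrated term-by-term against $\overline{e_\alpha}$ --- in particular for $f\in A^p(\cR)$ once one justifies interchanging sum and integral. Assuming $\alpha\in\cs(\cR,L^p)$, the conjugate exponent $q$ satisfies $e_\alpha\in A^q(\cR)$ is \emph{not} automatic, so the cleaner route is to bound directly: by the factorization $f\,\overline{e_\alpha}=f\,|e_\alpha|\cdot\overline{e_\alpha}/|e_\alpha|$ and Hölder with exponents $p$ and $q$, one gets $|a_\alpha(f)|\lesssim \norm{f}_p$, but the constant produced this way is not sharp. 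To get the \emph{exact} norm I would instead normalize differently.

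The sharp approach is to compute the norm from the definition $\norm{a_\alpha}_{A^p(\cR)'}=\sup\{|a_\alpha(f)| : \norm{f}_p=1\}$ in two directions. For the upper bound $\norm{a_\alpha}_{A^p(\cR)'}\le 1/\norm{e_\alpha}_{L^p(\cR)}$, the decisive tool is the mean-value / angular-projection inequality: for fixed $r\in|\cR|$, the $\alpha$-th Fourier coefficient of $f_r$ on $\tb^n$ has modulus at most the $L^p(\tb^n)$ norm of $f_r$ divided by an appropriate power, and combining this over $r$ via \eqref{eq-lpnorm} and Hölder in the radial variable yields the bound with constant exactly $1/\norm{e_\alpha}_p$. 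For the reverse inequality --- showing the bound is attained --- the natural extremal candidate is a function of the form $f=c\,e_\alpha\,|e_\alpha|^{p-2}$ (equivalently, the function realizing equality in Hölder), which is the $A^p$ analogue of the conjugate-exponent dual element; one checks $a_\alpha(f)\,/\,\norm{f}_p$ equals $1/\norm{e_\alpha}_p$.

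The main obstacle I expect is the \emph{term-by-term integration} step: establishing that for $f\in A^p(\cR)$ with $1<p$ one may extract $a_\alpha(f)$ by integrating the full Laurent series against $\overline{e_\alpha}$, since the series converges only uniformly on compact subsets, not on all of $\cR$. I would handle this by exhausting $\cR$ by the compactly contained Reinhardt subdomains $\cR_t$ (as in Proposition \ref{P:pv1}), applying term-by-term integration and orthogonality on each $\cR_t$ where convergence is uniform, and then passing to the limit using dominated convergence once the Hölder bound guarantees $f\,\overline{e_\alpha}\in L^1(\cR)$. A secondary subtlety is that the extremal function $e_\alpha|e_\alpha|^{p-2}$ must be verified to lie in $L^p(\cR)$ and to have the correct $\alpha$-th coefficient; both follow from $\alpha\in\cs(\cR,L^p)$, since $\norm{e_\alpha|e_\alpha|^{p-2}}_p^p=\int_\cR|e_\alpha|^p\,dV=\norm{e_\alpha}_p^p<\infty$.
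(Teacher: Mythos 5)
Your upper-bound argument is essentially the paper's proof: Cauchy's formula expresses $a_\alpha(f)$ as the $\alpha$-th Fourier coefficient of $f_r$ on each torus, H\"older in the angular variable gives $\abs{a_\alpha(f)}\, (2\pi)^{n/p}\, r^\alpha \leq \norm{f_r}_{L^p(\tb^n)}$, and combining over $r$ via \eqref{eq-lpnorm} yields the constant $1/\norm{e_\alpha}_{L^p(\cR)}$. One small correction there: no H\"older in the radial variable is needed or used; one simply raises the pointwise-in-$r$ inequality to the $p$-th power, multiplies by $r_1\cdots r_n$, and integrates over $\abs{\cR}$, so that the left side assembles into $\abs{a_\alpha(f)}^p\norm{e_\alpha}_{L^p(\cR)}^p$ and the right side into $\norm{f}_{L^p(\cR)}^p$. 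Also, your worry about term-by-term integration of the Laurent series never arises in this route, since Cauchy's formula on a single compact torus is all that is used.

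The sharpness step, however, contains a genuine error. Your proposed extremal $f = c\, e_\alpha\abs{e_\alpha}^{p-2}$ is not holomorphic when $p\neq 2$, so it does not belong to $A^p(\cR)$ and cannot compete in the supremum defining $\norm{a_\alpha}_{A^p(\cR)'}$; indeed $a_\alpha$, being a Laurent coefficient functional on $\co(\cR)$, is not even defined on it. This is not a removable technicality. The H\"older extremal computes the norm of the \emph{extended} functional $f\mapsto \norm{e_\alpha}_2^{-2}\langle f, e_\alpha\rangle$ on all of $L^p(\cR)$, which (when $e_\alpha\in L^q$) equals $\norm{e_\alpha}_q/\norm{e_\alpha}_2^2$, and since $\abs{e_\alpha}$ is non-constant on $\cR$ the H\"older inequality $\norm{e_\alpha}_2^2\leq \norm{e_\alpha}_p\norm{e_\alpha}_q$ is strict for $\alpha\neq 0$: the $L^p$-norm of the extended functional is strictly \emph{larger} than $1/\norm{e_\alpha}_p$. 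The entire content of the norm formula is that restriction to the holomorphic subspace lowers the norm, so the global $L^p$ H\"older extremizer is the wrong object. (Your final verification also miscomputes: $\bigl|e_\alpha\abs{e_\alpha}^{p-2}\bigr|^p = \abs{e_\alpha}^{p(p-1)}$, which is not $\abs{e_\alpha}^p$ unless $p=2$.) The correct extremal is far simpler: take $f=e_\alpha$ itself, which lies in $A^p(\cR)$ precisely because $\alpha\in\cs\left(\cR,L^p\right)$, satisfies $a_\alpha(e_\alpha)=1$, and hence gives $\abs{a_\alpha(e_\alpha)}/\norm{e_\alpha}_p = 1/\norm{e_\alpha}_p$; this one line is how the paper concludes. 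Note that $e_\alpha$ does realize equality in the H\"older step of the upper bound, but that H\"older is applied on each torus in the angular variable, where equality only forces $\abs{f_r}$ to be constant --- a condition monomials satisfy.
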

\begin{proof}
Let $\tb = \{\abs{z_j}=r_j \ : j = 1,\dots,n\} \subset \cR$ be a torus. For $f\in A^p(\cR)$, Cauchy's formula implies
\begin{align*} a_\alpha(f) = \frac{1}{(2\pi i)^n} \int_{\tb} \frac{f(\zeta)}{\zeta^\alpha} \cdot\frac{d\zeta_1}{\zeta_1}\dots \frac{d\zeta_n}{\zeta_n}=  \frac{1}{(2\pi)^n}\cdot\frac{1}{r^\alpha} \int_{\tb}{f_r(e^{i\theta})}{e ^{-i \langle \alpha, \theta\rangle} }d\theta,
\end{align*}
where $d\theta=d\theta_1 d\theta_2\dots d\theta_n$ is the volume element of the unit torus.  Hölder's inequality implies
\begin{align}\label{eq-a-alpha}
\abs{a_\alpha(f)}& \leq  \frac{1}{(2\pi)^n}\cdot \frac{1}{r^\alpha}\norm{f_r}_{L^p(\tb)} \norm{1}_{L^q(\tb)}
=  \frac{(2\pi)^{-\frac{n}{p}}}{r^\alpha}\norm{f_r}_{L^p(\tb)}.
\end{align}
When $p=\infty$, interpret $(2\pi)^{-\frac{n}{p}}$ as $1$. 
 
For $1\leq p <\infty$, it follows from \eqref{eq-a-alpha} that
\begin{equation*}
\abs{a_\alpha(f)}^p \cdot (2\pi)^n (r^\alpha)^p \leq  \norm{f_r}_{L^p(\tb)}^p.
\end{equation*}
So if  $\alpha\in \cs(\cR,L^p)$, 
\begin{align}
\abs{a_\alpha(f)}^p \cdot   \norm{e_\alpha}^p_{L^p(\cR)}&= \abs{a_\alpha(f)}^p\cdot   (2\pi)^n\int_{\abs{\cR}} (r^\alpha)^p r_1\dots r_n dr\nonumber\\
&\leq \int_{\abs{\cR}} \norm{f_r}^p_{L^p(\tb)}r_1\dots r_n dr = \norm{f}_{L^p(\cR)}^p.\label{eq-lpest}
\end{align}
If $\alpha\in  \mathcal{S}\left(\cR, L^\infty\right)$, the trivial estimate
$\abs{a_\alpha(f)} \leq \inf_{r\in |\Omega|} \frac{\norm{f}_{\infty}}{r^{\alpha}} = \frac{\norm{f}_{\infty}}{\sup_{z \in \Omega} |z^{\alpha}|} =  \frac{\norm{f}_\infty}{\norm{e_\alpha}_\infty}$
holds.
This estimate and \eqref{eq-lpest}  imply that for all $1\leq p \leq \infty$, 
\[ \norm{a_\alpha}_{A^p(\cR)'}\leq  \frac{1}{\norm{e_\alpha}_{L^p(\cR)}}.\]
Since $a_\alpha(e_\alpha)=1 = \frac{\norm{e_\alpha}_{L^p(\cR)}}{\norm{e_\alpha}_{L^p(\cR)}}$, in fact $ \norm{a_\alpha}_{A^p(\cR)'}=\frac{1}{\norm{e_\alpha}_{L^p(\cR)}}$.
\end{proof}
 
Proposition \ref{prop-coefficients} implies the Laurent expansions of functions in $A^p$ only have monomials that belong to $L^p$: 

\begin{corollary}\label{cor-aalpha}
Let $\cR$ be a bounded Reinhardt domain and $1\leq p\leq\infty$. Let $f\in A^p(\cR)$, with Laurent expansion given by \eqref{eq-laurent}. 

Then if $\alpha\not \in \cs\left(\cR,L^p\right)$, $a_\alpha(f)=0$. Thus
$$ f(z) = \sum_{\alpha\in\cs\left(\cR,L^p\right) } a_\alpha(f) e_\alpha(z).$$
\end{corollary}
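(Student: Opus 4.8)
The plan is to show that the Laurent coefficient $a_\alpha(f)$ vanishes whenever $\alpha\notin\cs(\cR,L^p)$; the series representation then follows immediately from the uniqueness of the Laurent expansion \eqref{eq-laurent}, since only the allowable indices can contribute. So the entire task reduces to the single implication: $f\in A^p(\cR)$ and $e_\alpha\notin A^p(\cR)$ together force $a_\alpha(f)=0$.

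First I would recall the polar-coordinate formula from the proof of Proposition \ref{prop-coefficients}. For a fixed radius $r\in|\cR|$, Cauchy's formula on the torus $\tb$ of radius $r$ gives
\[
a_\alpha(f) = \frac{(2\pi)^{-n}}{r^\alpha}\int_{\tb} f_r(e^{i\theta})\, e^{-i\langle\alpha,\theta\rangle}\, d\theta,
\]
and H\"older's inequality yields the pointwise-in-$r$ bound $|a_\alpha(f)|\,(2\pi)^{n/p}\, r^\alpha \le \norm{f_r}_{L^p(\tb)}$. The key observation is that the left-hand side is a constant multiple of $r^\alpha$ (the coefficient $a_\alpha(f)$ does not depend on $r$), so raising to the $p$-th power and integrating against the measure $r_1\cdots r_n\,dr$ over $|\cR|$ gives
\[
|a_\alpha(f)|^p\,(2\pi)^n\int_{|\cR|}(r^\alpha)^p\, r_1\cdots r_n\,dr \;\le\; \int_{|\cR|}\norm{f_r}_{L^p(\tb)}^p\, r_1\cdots r_n\,dr \;=\;\norm{f}_{L^p(\cR)}^p,
\]
using the polar formula \eqref{eq-lpnorm} on the right. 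The right-hand side is finite because $f\in A^p(\cR)$.

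Now the decisive step: the integral $\int_{|\cR|}(r^\alpha)^p\, r_1\cdots r_n\,dr$ appearing on the left is, up to the constant $(2\pi)^n$, exactly $\norm{e_\alpha}_{L^p(\cR)}^p$, again by \eqref{eq-lpnorm}. The hypothesis $\alpha\notin\cs(\cR,L^p)$ means precisely that $e_\alpha\notin L^p(\cR)$, i.e.\ this integral \emph{diverges}. Thus the displayed inequality reads $|a_\alpha(f)|^p\cdot(+\infty)\le\norm{f}_{L^p(\cR)}^p<\infty$, which is possible only if $|a_\alpha(f)|=0$. This forces $a_\alpha(f)=0$, as desired. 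The main (and really only) obstacle is bookkeeping: one must be careful that the estimate $|a_\alpha(f)|\,(2\pi)^{n/p}\,r^\alpha\le\norm{f_r}_{L^p(\tb)}$ holds for almost every $r\in|\cR|$ (it holds wherever $f_r\in L^p(\tb)$, which is a.e.\ by Fubini applied to $f\in L^p(\cR)$), so that the subsequent integration is legitimate; the divergence-versus-finiteness dichotomy then closes the argument cleanly.
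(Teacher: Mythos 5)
Your argument is correct for $1\le p<\infty$, and it rests on exactly the same estimate as the paper's proof --- the Cauchy--H\"older bound $\abs{a_\alpha(f)}(2\pi)^{n/p}r^\alpha\le\norm{f_r}_{L^p(\tb)}$ from the proof of Proposition \ref{prop-coefficients} --- but you deploy it differently. The paper never integrates a divergent quantity: it exhausts $\cR$ by relatively compact Reinhardt subdomains $\cR_\epsilon$, applies Proposition \ref{prop-coefficients} on each $\cR_\epsilon$ (where $e_\alpha$ \emph{is} integrable) to get $\abs{a_\alpha(f)}^p\norm{e_\alpha}^p_{L^p(\cR_\epsilon)}\le\norm{f}^p_{L^p(\cR_\epsilon)}\le\norm{f}^p_{L^p(\cR)}$, and lets $\epsilon\to 0$, so the left side blows up while the right side stays bounded. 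You instead integrate the pointwise bound over all of $\abs{\cR}$ and read the divergent integral as $+\infty$; this is legitimate (both sides are nonnegative measurable functions of $r$, and the inequality persists in $[0,\infty]$), and in effect you are observing that the chain of inequalities culminating in \eqref{eq-lpest} never actually uses the hypothesis $\alpha\in\cs(\cR,L^p)$ under which the paper states it. This buys a slightly shorter proof with no exhaustion; the paper's $\cR_\epsilon$ device is doing precisely the monotone-convergence bookkeeping that you bypass. Incidentally, your caution about ``almost every $r$'' is unnecessary: $f$ is continuous on each torus $\{\abs{z_j}=r_j\}\subset\cR$, so the pointwise estimate holds for \emph{every} $r\in\abs{\cR}$, and only measurability of $r\mapsto\norm{f_r}_{L^p(\tb)}$ is needed to integrate.

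One point does need repair: the corollary is stated for $1\le p\le\infty$, and your proof says nothing at $p=\infty$, where ``raise to the $p$-th power and integrate against $r_1\cdots r_n\,dr$'' is meaningless and $\norm{e_\alpha}_{L^\infty(\cR)}$ is not given by \eqref{eq-lpnorm}. The fix is one line: for every $r\in\abs{\cR}$ the Cauchy estimate gives $\abs{a_\alpha(f)}\,r^\alpha\le\norm{f_r}_{L^\infty(\tb)}\le\norm{f}_{L^\infty(\cR)}$, and taking the supremum over $r\in\abs{\cR}$ --- which is infinite precisely because $\alpha\notin\cs\left(\cR,L^\infty\right)$ --- forces $a_\alpha(f)=0$. (The paper's exhaustion argument covers $p=\infty$ uniformly, since Proposition \ref{prop-coefficients} is stated for all $1\le p\le\infty$.) With that case added, your proof is complete.
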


\begin{proof}Assume that $a_\alpha(f)\not=0$.
Choose a decreasing family of relatively compact Reinhardt domains $\cR_\epsilon\subset\cR$ such that $\cR_\epsilon
\to \cR$ as $\epsilon\searrow 0$.
It follows from Proposition \ref{prop-coefficients} that 
\[ \abs{a_\alpha(f)}^p \norm{e_\alpha}^p_{L^p(\cR_\epsilon)} \leq   \norm{f}_{L^p(\cR_\epsilon)}^p.\] As $\epsilon \to 0$, the right hand side 
tends to $ \norm{f}_{L^p(\cR)}<\infty$, but the left hand side tends to $\infty$, since
$\norm{e_\alpha}_{L^p(\cR_\epsilon)} \to \infty$. This contradiction proves the result.
\end{proof}

\begin{remark} Take $n=1$, let $U^*=\{0<\abs{z}<1\}$ be the punctured disc, and $p=\infty$. Clearly $\cs\left(U^*, L^{\infty}\right)=\mathbb{N}$.
Corollary~\ref{cor-aalpha} thus says every $f\in A^\infty\left(U^*\right)$ is of the form $f(z)= \sum_{n=0}^\infty a_nz^n$,
and consequently $f$ extends holomorphically to the unit disc. This recaptures Riemann's removable singularity theorem.  A similar argument holds on $A^p(U^*)$ for any $p\ge2$.
\end{remark}

\subsection{Norm convergence of Laurent series}\label{SS:TruncatedConvergence}

If $\cR$ is a bounded Reinhardt domain, $f\in A^p(\cR)$ and $p\in [1,\infty]$, Corollary \ref{cor-aalpha} says
\begin{equation}\label{E:ApLaurentSeries}
 f(z) = \sum_{\alpha\in\cs\left(\cR,L^p\right) } a_\alpha(f) e_\alpha(z),
\end{equation}
with uniform convergence on compact subsets of $\cR$. The goal of this section is to show the series also converges in the $A^p$ norm if $p\in (1,\infty)$.

Since the index set of the series is a subset of an $n$-dimensional lattice, a choice of truncation is required. If $\alpha=(\alpha_1, \dots, \alpha_n)\in \mathbb{Z}^n$ is a multi-index, let
$\abs{\alpha}_\infty = \max \{\abs{\alpha_j}, j=1, \dots, n\}.$ For a formal series  $g(z)=\sum_{\alpha\in \mathbb{Z}^n} b_\alpha e_\alpha(z)$ and a positive integer $N$,  let
\begin{equation*}\label{eq-sn}
 S_Ng = \sum_{\abs{\alpha}_\infty \leq N} b_\alpha e_\alpha.
\end{equation*}
Call this the ``square partial sum'' of the series defining $g$.  

For $p=2$, the square partial sums of \eqref{E:ApLaurentSeries} converge in $A^2(\cR)$ for elementary reasons. Orthogonality of $\{e_\alpha\}$ of $\cR$ gives
\begin{equation*}
\left\|S_Nf- f\right\|_2^2=\sum_{ \stackrel{|\alpha|_\infty >N}{\alpha\in\cs\left(\cR,L^2\right)} } \frac{\left|a_\alpha(f)\right|^2}{\|e_\alpha\|_2^2}.
\end{equation*}
This tends to $0$ as $N\to\infty$ if $f\in A^2(\cR)$. Thus $\{e_\alpha\}$ for $\alpha\in\cs\left(\cR,L^2\right)$ is an orthogonal basis for the Hilbert space $A^2(\cR)$.

An analogous result holds for $p\neq 2$:

\begin{theorem}\label{thm-schauder}
Let $\cR$ be a bounded Reinhardt domain in $\C^n$, $1<p <\infty$ and $f\in A^p(\cR)$.  

Then 
\begin{equation*} 
\norm{S_N f - f}_{p} \to 0\qquad\text{as }N\to \infty .
\end{equation*}
\end{theorem}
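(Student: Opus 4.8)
The plan is to reduce the several-variable statement to a uniform boundedness statement about the square-partial-sum operators $S_N$ acting on $A^p(\cR)$, and then to establish that uniform bound by a one-dimensional-at-a-time argument, exploiting the product structure of the torus. The abstract scheme is the standard Banach-space principle: if the finite-dimensional monomials (i.e.\ the span of $\{e_\alpha : \alpha \in \cs(\cR,L^p)\}$) are dense in $A^p(\cR)$, and if $\sup_N \|S_N\|_{A^p(\cR)\to A^p(\cR)} < \infty$, then $S_N f \to f$ in $A^p$ for every $f \in A^p(\cR)$. Density of the monomial span follows from Corollary~\ref{cor-aalpha} together with the uniform-on-compacta convergence in \eqref{E:ApLaurentSeries}: truncating to a slightly smaller Reinhardt subdomain and combining with an exhaustion argument shows finite Laurent sums approximate $f$ in $L^p$-norm. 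So the crux is the uniform bound on $S_N$.

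For the uniform bound I would pass to the torus via the polar-coordinate formula \eqref{eq-lpnorm}. Writing $f_r(e^{i\theta}) = \sum_\alpha a_\alpha(f)\, r^\alpha e^{i\langle\alpha,\theta\rangle}$ for each fixed radius $r \in |\cR|$, the operator $S_N$ acts radius-by-radius as the $n$-dimensional square Fourier partial-sum operator on the torus $\tb^n$, applied to the single-radius slice $f_r$. The key is that $\|S_N f\|_{L^p(\cR)}^p = \int_{|\cR|} \|S_N(f_r)\|_{L^p(\tb^n)}^p\, r_1\cdots r_n\, dr$, where $S_N$ on the inner factor is the genuine Fourier truncation. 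Thus it suffices to bound the $n$-dimensional torus square-partial-sum operator uniformly in $N$ on $L^p(\tb^n)$, $1 < p < \infty$, integrate in $r$, and invoke \eqref{eq-lpnorm} again. The uniform $L^p$-boundedness of the one-dimensional Fourier partial sum operator is the classical theorem of M.~Riesz (equivalently, $L^p$-boundedness of the conjugate function / Hilbert transform on the circle for $1 < p < \infty$); the $n$-dimensional square-sum operator factors as a composition of one-dimensional truncations, one per variable, so its norm is bounded by the $n$-th power of the one-dimensional constant, uniformly in $N$. This is precisely the mechanism by which Riesz's disc theorem (cited after Theorem~\ref{T:intro4}) generalizes.

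I expect the main obstacle to be \emph{not} the harmonic analysis on the torus — that is classical — but the bookkeeping needed to make the factorization through the torus rigorous when the index set is the restricted set $\cs(\cR,L^p)$ rather than all of $\Z^n$. One must check that applying the full Fourier truncation $S_N$ to the slice $f_r$ produces exactly the holomorphic truncation whose inverse polar transform lies back in $A^p(\cR)$; since Corollary~\ref{cor-aalpha} guarantees $a_\alpha(f) = 0$ for $\alpha \notin \cs(\cR,L^p)$, the two truncations agree, but this compatibility must be stated carefully so that the torus estimate transfers cleanly. A secondary technical point is integrability: the slices $f_r$ need to lie in $L^p(\tb^n)$ for almost every $r$ with $\int_{|\cR|}\|f_r\|_{L^p(\tb^n)}^p r_1\cdots r_n\, dr < \infty$, which is exactly the content of \eqref{eq-lpnorm}, and one applies Fubini/Tonelli to justify interchanging the radial integration with the torus estimate. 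Once these measure-theoretic checks are in place, the uniform bound is immediate and the density-plus-uniform-boundedness argument closes the proof.
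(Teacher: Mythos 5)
Your overall skeleton (density of allowable Laurent polynomials plus a uniform bound on $S_N$, via Lemma~\ref{lem-schauder1}-type reasoning) is the paper's skeleton, and your uniform-boundedness half is exactly the paper's Lemma~\ref{lem-boundedness}: slice by polar coordinates \eqref{eq-lpnorm}, note $(S_Nf)_r = \sigma_N f_r$ because Fourier truncation on each torus agrees with the holomorphic truncation, and apply the classical Riesz square-sum theorem on $L^p(\tb^n)$. The gap is in the half you dismiss as soft: the density of ${\rm span}\{e_\alpha : \alpha\in\cs(\cR,L^p)\}$ in $A^p(\cR)$ does \emph{not} follow from Corollary~\ref{cor-aalpha} plus uniform-on-compacta convergence plus an exhaustion argument. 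Concretely: fix $\cR_\epsilon \subset\subset \cR$ Reinhardt. Uniform convergence on $\ol{\cR_\epsilon}$ controls $\norm{S_Nf - f}_{L^p(\cR_\epsilon)}$, and absolute continuity of the integral controls $\norm{f}_{L^p(\cR\setminus\cR_\epsilon)}$, but nothing in your argument controls $\norm{S_Nf}_{L^p(\cR\setminus\cR_\epsilon)}$: the candidate approximants may a priori dump arbitrarily large $L^p$ mass into the shell near $b\cR$, and the global bound $\norm{S_Nf}_p \le C_p\norm{f}_p$ only gives a quantity that does not shrink with $\epsilon$. (You cannot invoke the theorem's conclusion to control the shell term; that is circular.) On these domains density statements are genuinely delicate --- compare Proposition~\ref{P:FailureApprox}, where $A^2(\h)$ fails to be dense in $A^{5/3}(\h)$ --- and this density step is precisely where the paper does its real work.

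The paper closes the gap by proving convergence on the \emph{dual} space first. It defines $S_N'\lambda = \sum_{\abs{\alpha}_\infty\le N}\lambda(e_\alpha)a_\alpha$ for $\lambda\in A^p(\cR)'$, gets $\sup_N\norm{S_N'}_{\rm op}<\infty$ from Lemma~\ref{lem-boundedness}, and proves that ${\rm span}\{a_\alpha\}$ is dense in $A^p(\cR)'$ using reflexivity of $A^p(\cR)$ (a functional on the dual annihilating every $a_\alpha$ is represented by some $g\in A^p(\cR)$ all of whose Laurent coefficients vanish, so $g=0$); Lemma~\ref{lem-schauder1} applied on the dual then gives $S_N'\lambda\to\lambda$ for every $\lambda$ (Proposition~\ref{prop-dualconv}). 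With that in hand, any $\lambda\in A^p(\cR)'$ vanishing on all allowable monomials satisfies $S_N'\lambda=0$ for every $N$, hence $\lambda=0$, and Hahn--Banach yields exactly the density you asserted. Alternatively, your exhaustion argument can be repaired quantitatively without duality: the Riesz estimate is applied slice-by-slice in $r$, so it restricts to any radially saturated subset, giving $\norm{S_Nf}_{L^p(\cR\setminus\cR_\epsilon)} \le C_p\norm{f}_{L^p(\cR\setminus\cR_\epsilon)}$; with that refinement the splitting $\norm{S_Nf-f}_{L^p(\cR)} \le \norm{S_Nf-f}_{L^p(\cR_\epsilon)} + (C_p+1)\norm{f}_{L^p(\cR\setminus\cR_\epsilon)}$ closes the proof directly and even bypasses the abstract lemma. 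Either way, an additional input beyond ``uniform on compacts plus exhaustion'' is mandatory, and your write-up does not supply it.
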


The proof of Theorem \ref{thm-schauder} is broken into parts.
 
\subsubsection{Reduction and estimate}
The following fact reduces matters to an estimate plus a simpler density result.

\begin{lemma}\label{lem-schauder1}
Let $T_k$, $k=1,2, \dots$, be a sequence of bounded linear operators from a Banach space $X$ to a Banach space $Y$. Suppose that there is a dense subset 
$D$ of $X$, so that  for each $x\in D$,  $T_k x \to 0$ in the norm of $Y$ as $k\to \infty$. Then the following are equivalent
\begin{enumerate}
\item $\lim_{k\to \infty} \norm{T_kx}=0$ for each $x\in X$.
\item there is a $C>0$ such that for each $k$, we have $\norm{T_k}_{\rm op}\leq C$.
\end{enumerate}
\end{lemma}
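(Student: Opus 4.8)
The plan is to recognize this as the standard Banach--Steinhaus convergence criterion and to prove the two implications separately.

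\textbf{The implication $(1)\Rightarrow(2)$.} Here I would argue directly from the uniform boundedness principle, and note that the density hypothesis on $D$ is not even needed. If $\lim_{k\to\infty}\norm{T_k x}=0$ for every $x\in X$, then for each fixed $x$ the sequence $\{T_k x\}_k$ is convergent in $Y$, hence bounded, so $\sup_k\norm{T_k x}<\infty$. Since $X$ is complete, the uniform boundedness principle immediately produces a finite constant $C=\sup_k\norm{T_k}_{\rm op}$, which is exactly $(2)$.

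\textbf{The implication $(2)\Rightarrow(1)$.} This is the classical $\varepsilon$-approximation argument and is where the density of $D$ is used. Fix $x\in X$ and $\varepsilon>0$, and choose $y\in D$ with $\norm{x-y}<\varepsilon$. By linearity, the triangle inequality, and the uniform bound from $(2)$,
\[
\norm{T_k x} \le \norm{T_k(x-y)} + \norm{T_k y} \le C\norm{x-y} + \norm{T_k y} < C\varepsilon + \norm{T_k y}.
\]
Since $y\in D$, the hypothesis gives $\norm{T_k y}\to 0$, so $\limsup_{k\to\infty}\norm{T_k x}\le C\varepsilon$. As $\varepsilon>0$ is arbitrary, $\lim_{k\to\infty}\norm{T_k x}=0$, establishing $(1)$.

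The result is essentially standard and presents no serious obstacle; the only points requiring care are that $(1)\Rightarrow(2)$ genuinely invokes completeness of $X$ through the uniform boundedness principle, whereas $(2)\Rightarrow(1)$ is purely elementary, trading density against the uniform norm bound. In the intended application one takes $T_N=S_N-\mathrm{Id}$ acting on $A^p(\cR)$ and lets $D$ be the finite Laurent sums (dense in $A^p(\cR)$), for which $S_N$ stabilizes and hence $T_N$ converges to $0$ trivially. Thus this lemma reduces Theorem \ref{thm-schauder} to verifying a single uniform bound $\sup_N\norm{S_N}_{\rm op}<\infty$ on $A^p(\cR)$, which is the genuinely substantive estimate.
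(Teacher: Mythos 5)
Your proof is correct and follows essentially the same route as the paper: $(1)\Rightarrow(2)$ is an immediate application of the uniform boundedness principle, and $(2)\Rightarrow(1)$ is the standard $\varepsilon$-approximation argument using density of $D$ together with the uniform bound, differing from the paper's write-up only in trivial bookkeeping ($\varepsilon$ versus $\varepsilon/(2C)$). Your closing remarks about how the lemma is applied with $T_N = S_N - \mathrm{Id}$ also match the paper's use of it in proving Theorem~\ref{thm-schauder}.
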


\begin{proof}  This is a slight generalization of  \cite[Proposition 1]{Zhu91}. Assume (1). Then (2) holds by the uniform boundedness principle.

Assume (2). Fix $x\in X$ and $\epsilon>0$. Since $D$ is dense in $X$, there exists $p\in D$ such that $\norm{x-p}_X< \frac{\epsilon}{2C}$. Therefore
\begin{align*}
\norm{T_k x}_Y&\leq \norm{T_k x - T_k p}_Y+ \norm{T_k p}_Y  < \frac{\epsilon}{2}+  \norm{T_k p}_Y.
 \end{align*} 
Choosing $k$ so large that $\norm{T_k p}_Y< \frac{\epsilon}{2}$ yields (1). 
\end{proof}


The estimate for Theorem \ref{thm-schauder} is

\begin{lemma}\label{lem-boundedness}
Let $\cR$ be a bounded Reinhardt domain. For each $1<p<\infty$, there exists a constant $C_p$ such that 
$$\left\|S_N f\right\|_{p} \leq C_p \|f\|_p \quad\text{for all } N\in\Z^+, \,\, f\in A^p(\cR).$$
\end{lemma}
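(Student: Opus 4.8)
The plan is to transfer the problem to the unit torus via the polar-coordinate formula \eqref{eq-lpnorm} and reduce to the classical M.~Riesz theorem. First I would observe that for fixed $r\in\abs{\cR}$ the slice $f_r$ has Fourier series $f_r(e^{i\theta})=\sum_{\alpha} a_\alpha(f)\,r^\alpha e^{i\langle\alpha,\theta\rangle}$ on $\tb^n$, and that the square partial sum commutes with slicing: $(S_Nf)_r=\sigma_N(f_r)$, where $\sigma_N$ denotes the Fourier partial-sum operator on $\tb^n$ that truncates to $\abs{\alpha}_\infty\le N$. Granting a uniform bound $\norm{\sigma_N g}_{L^p(\tb^n)}\le C_p\norm{g}_{L^p(\tb^n)}$ valid for all $g\in L^p(\tb^n)$ and all $N$, the lemma follows at once by inserting this into \eqref{eq-lpnorm}:
\begin{align*}
\norm{S_N f}_{L^p(\cR)}^p = \int_{\abs{\cR}}\norm{(S_Nf)_r}_{L^p(\tb^n)}^p\, r_1\cdots r_n\, dr \le C_p^p\int_{\abs{\cR}}\norm{f_r}_{L^p(\tb^n)}^p\, r_1\cdots r_n\, dr = C_p^p\norm{f}_{L^p(\cR)}^p.
\end{align*}
The slices $f_r$ lie in $L^p(\tb^n)$ for a.e.\ $r$ precisely because the right-hand integral is finite, so the fiberwise estimate is licit almost everywhere.

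The heart of the matter is thus the uniform $L^p(\tb^n)$ bound on $\sigma_N$, and this is where the only genuine analysis enters. I would obtain it by factoring $\sigma_N$ as a composition of one-variable partial-sum operators: writing $\sigma_N^{(j)}$ for the operator that truncates only the $j$-th frequency to $\abs{\alpha_j}\le N$ and leaves the other variables untouched, one has $\sigma_N=\sigma_N^{(1)}\circ\cdots\circ\sigma_N^{(n)}$, since $\{\abs{\alpha}_\infty\le N\}=\bigcap_j\{\abs{\alpha_j}\le N\}$ and the corresponding Fourier multipliers factor as products of the one-dimensional indicator multipliers. Each $\sigma_N^{(j)}$ acts in a single torus variable, so by Fubini its $L^p(\tb^n)$ operator norm is controlled by the $L^p(\tb^1)$ operator norm of the classical one-dimensional partial-sum operator. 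The M.~Riesz theorem (equivalently, $L^p$-boundedness of the Riesz projection/conjugate function on $\tb^1$ for $1<p<\infty$) bounds the latter independently of $N$. Composing the $n$ factors then yields $\norm{\sigma_N}_{\rm op}\le C_p^{\,n}$ uniformly in $N$, which closes the reduction above with final constant $C_p^{\,n}$.

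I expect the main difficulty to be bookkeeping rather than conceptual. The points requiring care are that the square truncation genuinely splits as the composition of the one-dimensional truncations, that the slicing identity $(S_Nf)_r=\sigma_N(f_r)$ holds with the \emph{same} $N$, and that the Fubini argument correctly reduces each $\sigma_N^{(j)}$ to a one-dimensional estimate. The one nontrivial external input is the classical uniform $L^p$ bound for one-dimensional Fourier partial sums; everything else is Fubini together with \eqref{eq-lpnorm}. It is worth emphasizing that the hypothesis $1<p<\infty$ is exactly the range in which the M.~Riesz theorem holds, and that negative frequencies (which occur since $0\notin\cR$ is permitted) cause no trouble, as the one-dimensional theorem applies verbatim to the full symmetric truncation $\abs{\alpha_j}\le N$ over $\zb$.
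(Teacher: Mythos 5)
Your proposal is correct and follows essentially the same route as the paper: both arguments slice $f$ to tori via \eqref{eq-lpnorm}, use the identity $(S_Nf)_r=\sigma_N(f_r)$, and conclude from a uniform $L^p(\tb^n)$ bound for square Fourier partial sums, which is exactly the Riesz theorem the paper cites (Stein--Weiss, Chapter VII). The only difference is that you derive that $n$-dimensional bound from the one-dimensional M.~Riesz theorem by factoring $\sigma_N$ into one-variable truncations and applying Fubini, whereas the paper invokes the multidimensional statement directly; since your factorization argument is the standard proof of the cited result, the two proofs coincide in substance.
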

\begin{proof}
Denote the unit torus by $\tb^n =\left\{z\in \cx^n: \abs{z_j}=1, \text{ for }j=1,\dots, n \right\}$.
If $g$ is a function on $\tb^n$, let $\sigma_N g$ denote the square partial sum of its Fourier series,
 \[\sigma_N g = \sum_{\abs{\nu}_\infty\leq N}\wh{g}(\nu) e^{i\nu\cdot \theta}.\]
A theorem of Riesz, see e.g. \cite[Chapter VII]{SteinWeiss}, says for each $1<p<\infty$, there is a constant $C_p$ such that $\norm{\sigma_N g}_p \leq C_p\|g\|_p$ independently of $N$.

It follows from \eqref{eq-lpnorm} that
\begin{align*}
\norm{S_N f}_{p}^p &= \int_{\abs{\cR}} \norm{\sigma_N f_r}_{L^p(\tb^n)}^p r_1r_2\dots r_n dr\\
&\leq C_p \int_{\abs{\cR}} \norm{f_r}_{L^p(\tb^n)}^p r_1r_2\dots r_n dr = C_p \norm{f}^p_{p}.
\end{align*}
\end{proof}

\subsubsection{Series expansion of functionals}\label{SSS:FunctionalExpansion} The dense set $D$ needed in Lemma \ref{lem-schauder1} is found by duality.
Given a functional $\lambda \in A^p(\cR)'$, consider the finite sum
\begin{equation}\label{E:PartialSumDual}
  S_N'\lambda = \sum_{\abs{\alpha}_\infty \leq N} \lambda(e_\alpha) a_\alpha,
\end{equation}
where $a_\alpha$ are the coefficient functionals in Proposition \ref{prop-coefficients}.

\begin{proposition}\label{prop-dualconv}
For each $\lambda \in A^p(\cR)'$,
\begin{equation}\label{E:PartialFunctionalsConverge}
\left\|S_N' \lambda - \lambda\right\|_{(A^p)'}\to 0\qquad\text{as } N\to \infty. 
\end{equation}
\end{proposition}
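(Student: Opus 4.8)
The plan is to recognize $S_N'$ as the Banach-space transpose of the partial-sum operator $S_N$, and then to move the truncation off $f$ and onto an $L^q$ representative of $\lambda$. First I would record the identity $S_N'\lambda=\lambda\circ S_N$. Indeed, for $f\in A^p(\cR)$ Corollary \ref{cor-aalpha} gives $a_\alpha(f)=0$ whenever $\alpha\notin\cs(\cR,L^p)$, so only allowable indices actually contribute to \eqref{E:PartialSumDual}; since $\lambda$ is linear and $S_Nf=\sum_{\abs{\alpha}_\infty\le N}a_\alpha(f)e_\alpha$ is a finite sum of elements of $A^p(\cR)$,
\[ (S_N'\lambda)(f)=\sum_{\abs{\alpha}_\infty\le N}\lambda(e_\alpha)\,a_\alpha(f)=\lambda\Big(\sum_{\abs{\alpha}_\infty\le N}a_\alpha(f)e_\alpha\Big)=\lambda(S_Nf). \]
Hence $\norm{S_N'\lambda-\lambda}_{(A^p)'}=\sup_{\norm{f}_p\le1}\abs{\lambda(S_Nf-f)}$. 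I would then extend $\lambda$ by Hahn--Banach to $L^p(\cR)$ and use the Riesz representation of $(L^p)'$ to obtain $g\in L^q(\cR)$ with $\lambda(f)=\langle f,g\rangle$ for all $f\in A^p(\cR)$, reducing everything to an estimate on $\langle S_Nf-f,g\rangle$.

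The second step transfers the truncation from $f$ to $g$. Writing the pairing in the polar coordinates \eqref{eq-lpnorm} and using that the restriction of $S_Nf$ to the torus of radius $r$ is precisely the square Fourier partial sum $\sigma_N(f_r)$, I would invoke the self-adjointness of $\sigma_N$ on each torus: it is a Fourier multiplier with the real symmetric symbol $\mathbf 1_{\{\abs{\nu}_\infty\le N\}}$, so $\int_{\tb^n}\sigma_N(f_r)\,\overline{g_r}\,d\theta=\int_{\tb^n}f_r\,\overline{\sigma_N(g_r)}\,d\theta$, an identity checked directly on Fourier coefficients (legitimate for a.e.\ $r$ since $f_r\in L^1(\tb^n)$ and $g_r\in L^1(\tb^n)$). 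Reassembling over $r$ yields $\langle S_Nf,g\rangle=\langle f,\wh S_Ng\rangle$, where $\wh S_N$ is the fiberwise square partial sum defined by $(\wh S_Ng)_r=\sigma_N(g_r)$; this is just $S_N$ extended to (not necessarily holomorphic) $L^q$ functions, and the computation of Lemma \ref{lem-boundedness} applies verbatim to give $\norm{\wh S_Ng}_q\le C_q\norm{g}_q$, so $\wh S_Ng\in L^q(\cR)$. Consequently $\lambda(S_Nf-f)=\langle f,\wh S_Ng-g\rangle$, and Hölder's inequality gives
\[ \norm{S_N'\lambda-\lambda}_{(A^p)'}\le\norm{\wh S_Ng-g}_{L^q(\cR)}. \]

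It then remains to prove $\wh S_Ng\to g$ in $L^q(\cR)$. Fiberwise this is the classical fact that for $1<q<\infty$ the square Fourier partial sums converge in $L^q(\tb^n)$, i.e.\ $\norm{\sigma_N(g_r)-g_r}_{L^q(\tb^n)}\to0$ for a.e.\ $r\in\abs{\cR}$; this follows from the Riesz bound already quoted in Lemma \ref{lem-boundedness} together with density of trigonometric polynomials (the $(2)\Rightarrow(1)$ direction of Lemma \ref{lem-schauder1}, applied on $\tb^n$). I would integrate in $r$ and pass to the limit by dominated convergence, the integrand $\norm{\sigma_N(g_r)-g_r}_{L^q(\tb^n)}^q$ being controlled by $(C_q+1)^q\norm{g_r}_{L^q(\tb^n)}^q$, which is integrable against $r_1\cdots r_n\,dr$ because $\norm{g}_{L^q(\cR)}<\infty$. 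This gives $\norm{\wh S_Ng-g}_{L^q(\cR)}\to0$, and with the previous display the proposition follows.

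The main obstacle is conceptual rather than computational: one must see that the dual truncations $S_N'\lambda$ are governed not by holomorphic partial sums but by the \emph{full} (non-holomorphic) Fourier truncation of an $L^q$ representative $g$, so that the desired convergence is inherited for free from the one-variable Riesz theory via a fiberwise dominated-convergence argument. The only points requiring care are the Fubini/polar-coordinate justification of the self-adjointness identity (valid because $S_Nf\in A^p(\cR)$ by Lemma \ref{lem-boundedness}, whence $(S_Nf)\overline{g}\in L^1(\cR)$ and the coefficientwise computation is meaningful for a.e.\ $r$) and the uniform domination in the final limit.
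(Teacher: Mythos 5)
Your proof is correct, but it is not the paper's argument---the two routes diverge right after the common first step. Both begin by identifying $S_N'\lambda(f)=\lambda(S_Nf)$ (justified, as you note, by Corollary \ref{cor-aalpha}), which gives the uniform bound $\norm{S_N'}_{\rm op}\leq C$ via Lemma \ref{lem-boundedness}. The paper then stays soft: it proves that the span of the coefficient functionals $\left\{a_\alpha:\alpha\in\cs\left(\cR,L^p\right)\right\}$ is dense in $A^p(\cR)'$ (using reflexivity of $A^p(\cR)$ and a Hahn--Banach argument in the double dual), observes that $S_N'-{\rm id}$ annihilates each element of that span for large $N$, and invokes Lemma \ref{lem-schauder1} with $X=Y=A^p(\cR)'$. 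You instead take an $L^q$ representative $g$ of $\lambda$ (Hahn--Banach plus Riesz representation of $(L^p)'$), transfer the square truncation from $f$ to $g$ by the fiberwise self-adjointness of the Fourier multiplier $\sigma_N$ (legitimate since its symbol is a real indicator, and the coefficientwise computation makes sense for a.e.\ $r$), and then prove directly that the full non-holomorphic angular truncation $\wh S_N g$ converges to $g$ in $L^q(\cR)$ by the torus Riesz theorem applied fiberwise together with dominated convergence in $r$. Your approach buys a quantitative estimate, $\norm{S_N'\lambda-\lambda}_{(A^p)'}\leq\norm{\wh S_N g-g}_{L^q(\cR)}$, valid for any representative $g$, and it bypasses both the reflexivity of $A^p(\cR)$ and the density claim in the dual: the dual convergence is exhibited as being inherited from strong convergence of the Fourier truncation on all of $L^q(\cR)$. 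What the paper's argument buys is brevity and economy of means: it reuses Lemma \ref{lem-schauder1}, which was set up precisely for such deductions, never leaves the holomorphic category, and its intermediate density claim is itself the kind of statement exploited again in Proposition \ref{P:ReinhardtDuals}. The delicate points in your write-up---the identification $(S_Nf)_r=\sigma_N(f_r)$ for $f\in A^p(\cR)$, the Fubini justification of the pairing identity, the a.e.-$r$ integrability of $g_r$, and the uniform domination $(C_q+1)^q\norm{g_r}_{L^q(\tb^n)}^q$ in the final limit---are all handled correctly, and nothing circular is used (Lemma \ref{lem-schauder1} is applied only on $\tb^n$, not to the space whose density Theorem \ref{thm-schauder} is meant to establish).
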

\begin{proof} For $f\in A^p(\cR)$ 
\begin{align*}
S_N' \lambda ( f) = \sum_{\abs{\alpha}_\infty \leq N} \lambda(e_\alpha) a_\alpha(f)
= \lambda \left( \sum_{\abs{\alpha}_\infty \leq N} a_\alpha(f) e_\alpha\right)
 = \lambda (S_N f)
\end{align*}
It follows from Lemma~\ref{lem-boundedness}
\[ \abs{S_N'\lambda (f)} = \abs{\lambda (S_Nf)} \leq  C\|\lambda\|_{\left(A^p\right)'}\, \|f\|_p.\]
Thus $\norm {S_N'}_{\rm op} \leq C$
where  $S_N'$ is viewed as an operator on the Banach space $A^p(\cR)'$. 
\medskip

\noindent{\bf Claim:}  The span of  $\left\{a_\alpha: \alpha\in \cs\left(\cR,L^p\right)\right\}$ is 
dense in $A^p(\cR)'$. 
\smallskip

To prove the claim, let $\mu \in (A^p(\cR)')'$ be an element of the double dual of $A^p(\cR)$ such that $\mu(f)=0$
for each $f$ in the span of  $\left\{a_\alpha: \alpha\in \cs\left(\cR,L^p\right)\right\}$. By the Hahn Banach theorem, it suffices to show that $\mu=0$ on $A^p(\cR)'$.

Since $A^p(\cR)$ is closed in $L^p(\cR)$, $A^p(\cR)$ is reflexive. Therefore there exists a $g\in A^p(\cR)$ such that $\mu(f)=f(g)$ for all $f\in A^p(\cR)'$. Taking $f=a_\alpha$, it follows  that $a_\alpha(g)=0$, i.e. the $\alpha$-th coefficient of the Laurent expansion of the  holomorphic function $g$ vanishes for each $\alpha$. This implies $g=0$, which shows $\mu=0$ and establishes the claim.
\smallskip

To complete the proof, in Lemma~\ref{lem-schauder1}  let $X=Y=A^p(\cR)'$, $T_N=S_N' - {\rm id}$ and $D$  be the linear span of $\left\{a_\alpha: \alpha\in \cs\left(\cR,L^p\right)\right\}$. Note that for each element  $\lambda \in D$, there is an $N$ such that $T_\nu \lambda =0$ for $\nu\geq N$. The hypotheses of Lemma~\ref{lem-schauder1} are thus satisfied; the lemma implies \eqref{E:PartialFunctionalsConverge}.
\end{proof}

\subsubsection{Proof of Theorem~\ref{thm-schauder}} In Lemma~\ref{lem-schauder1}, take $X=Y=A^p(\cR)$,
and $T_N=S_N - {\rm id}$. For each Laurent polynomial $p$, note that $T_N p=0$ for large enough $N$. The result will follow from Lemma~\ref{lem-schauder1} provided it is shown that 
$D=:\left\{{\rm Laurent\,\,\, polynomials } \in A^p(\cR)\right\}$ is a dense subspace of $A^p(\cR)$.  

By Corollary~\ref{cor-aalpha}, $D$ is the linear span of $\{e_\alpha: \alpha\in \cs\left(\cR, L^p\right)\}$. To show this last set is dense, suppose $\lambda\in A^p(\cR)'$ satisfies $\lambda(e_\alpha)=0$ for all $\alpha \in \cs\left(\cR, L^p\right)$. Definition \eqref{E:PartialSumDual} shows $S_N'\lambda =0$ for each $N$. However Proposition~\ref{prop-dualconv} implies $\lambda = \lim S_N'\lambda=0$. Thus, the Hahn-Banach theorem implies ${\rm span}\left\{e_\alpha: \alpha\in \cs\left(\cR, L^p\right)\right\}$ is dense in $A^p(\cR)$. \qed

\subsection{Computing the projection  term-by-term}\label{SS:TermByTerm}
If $\Omega\subset\C^n$ is a bounded domain and $p\geq 2$, $\bm{B}h=h$ for all $h\in A^p(\Omega)$ since $A^p(\Omega)\subset A^2(\Omega)$. For $1<p<2$, this generally fails, even if $\bm{B}$ is $L^p$ bounded.  

However on a bounded Reinhardt domain, if $|\bm{B}|$ satisfies (H1) and $h$ is in the form \eqref{E:ApLaurentSeries}, $\bm{B}h$ can be computed merely by discarding monomials.

\begin{proposition}\label{P:BonA^p}
Let $\cR$ be a bounded Reinhardt domain. For given $1<p< 2$, suppose $\abs{\bm{B}}$ is
bounded on $L^p(\cR)$. 
\begin{enumerate}
\item[(i)] If $\gamma\in \cs\left(\cR, L^p\right)\setminus \cs\left(\cR, L^2\right)$,
then   $e_\gamma\in\ker\bm{B}$. 
\item[(ii)] 
If $f\in A^p(\cR)$ has expansion \eqref{E:ApLaurentSeries}, then
\begin{equation*}
\bm{B} f=  \sum_{\alpha\in \cs\left(\cR, L^2\right)} a_{\alpha}(f)e_{\alpha}.
\end{equation*}
\end{enumerate}
The square partial sums of the series in (ii) converge in  $L^p(\cR)$.
\end{proposition}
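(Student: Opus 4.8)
The plan is to establish (i) by a direct principal-value computation and then deduce (ii), together with the convergence of the square partial sums, by a limiting argument built on Theorem~\ref{thm-schauder}.

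\emph{Part (i).} Since $\gamma\in\cs(\cR,L^p)$, the monomial $e_\gamma$ belongs to $A^p(\cR)\subset L^p(\cR)$, so the hypothesis that $\abs{\bm{B}}$ is bounded on $L^p(\cR)$ allows me to apply Proposition~\ref{P:pv1}. Taking a family $\{\cR_t\}$ of relatively compact Reinhardt subdomains exhausting $\cR$ as in Proposition~\ref{P:pv1}, I obtain for almost every $z\in\cR$
\begin{equation*}
\bm{B}e_\gamma(z)=\lim_{t\to 0}\int_{\cR_t} B(z,w)\,w^\gamma\,dV(w).
\end{equation*}
Because $\{e_\alpha/\norm{e_\alpha}_{L^2(\cR)}:\alpha\in\cs(\cR,L^2)\}$ is an orthonormal basis of $A^2(\cR)$ (Section~\ref{SS:TruncatedConvergence}), the kernel expands via \eqref{E:BergmanInfiniteSum} as $B(z,\cdot)=\sum_{\alpha\in\cs(\cR,L^2)}\norm{e_\alpha}_{L^2(\cR)}^{-2}\,z^\alpha\,\bar w^\alpha$. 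For fixed $z$ this series converges uniformly on the compact set $\ol{\cR_t}$, so I may integrate term by term. The Reinhardt symmetry reduces each term to an angular integral that vanishes unless $\alpha=\gamma$; since $\gamma\notin\cs(\cR,L^2)$, no index $\alpha$ appearing in the sum equals $\gamma$, so every term is $0$. Hence the integral over $\cR_t$ is identically $0$ for each $t$, and therefore $\bm{B}e_\gamma\equiv 0$.

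\emph{Part (ii) and the final statement.} The triangle inequality shows that boundedness of $\abs{\bm{B}}$ on $L^p$ forces $\bm{B}$ itself to be bounded on $L^p(\cR)$. Given $f\in A^p(\cR)$ with expansion \eqref{E:ApLaurentSeries}, Theorem~\ref{thm-schauder} yields $\norm{S_Nf-f}_p\to0$, and $L^p$-boundedness of $\bm{B}$ then gives $\norm{\bm{B}S_Nf-\bm{B}f}_p\to0$. Since $\bm{B}e_\alpha=e_\alpha$ for $\alpha\in\cs(\cR,L^2)$ (these monomials lie in $A^2$) while $\bm{B}e_\alpha=0$ for $\alpha\in\cs(\cR,L^p)\setminus\cs(\cR,L^2)$ by part (i), the finite sum $\bm{B}S_Nf$ collapses to $\sum_{\abs{\alpha}_\infty\leq N,\ \alpha\in\cs(\cR,L^2)}a_\alpha(f)e_\alpha$, which is exactly the square partial sum of the series claimed in (ii). Thus these square partial sums converge in $L^p$ to $\bm{B}f$, which already proves the last sentence of the Proposition. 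To identify the limit, I apply the continuity of each coefficient functional $a_\beta$ on $A^p(\cR)$ (Proposition~\ref{prop-coefficients}): passing to the limit in $a_\beta(\bm{B}S_Nf)$ shows $a_\beta(\bm{B}f)=a_\beta(f)$ for $\beta\in\cs(\cR,L^2)$ and $a_\beta(\bm{B}f)=0$ otherwise, so Corollary~\ref{cor-aalpha} identifies $\bm{B}f$ with $\sum_{\alpha\in\cs(\cR,L^2)}a_\alpha(f)e_\alpha$.

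I expect the main obstacle to be the rigorous justification of part (i): interchanging the infinite kernel sum with the integral over $\cR_t$, which rests on uniform convergence of the Bergman kernel series on relatively compact Reinhardt subdomains, together with a careful invocation of the principal-value formula of Proposition~\ref{P:pv1} so that almost-everywhere vanishing upgrades to $\bm{B}e_\gamma\equiv 0$ as a holomorphic function. Once (i) is in hand, part (ii) is a soft limiting argument whose only delicate point is using continuity of the coefficient functionals to pin down the limit.
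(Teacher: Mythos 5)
Your proof is correct and follows essentially the same route as the paper: part (i) via exhaustion by relatively compact Reinhardt subdomains, orthogonality of monomials against the Bergman kernel series, and the principal-value formula of Proposition~\ref{P:pv1}; part (ii) via Theorem~\ref{thm-schauder} and $L^p$-continuity of $\bm{B}$ applied to the square partial sums. The extra details you supply (uniform convergence of the kernel series justifying term-by-term integration, and the coefficient-functional argument identifying the limit) are sound refinements of steps the paper leaves implicit.
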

\begin{proof} To see (i), 
choose a decreasing family $\{\cR_t: 0<t<1\}$ of relatively compact  {\em Reinhardt} subdomains 
of $\cR$ whose union is  $\mathcal{R}$. Then $e_\gamma\in L^2(\cR_t)$. For each $\beta\in \mathcal{S}(\cR, L^2)$, orthogonality implies $\left\langle e_\gamma, e_\beta\right\rangle_{\cR_t}=0$  since $\gamma\notin \mathcal{S}(\cR, L^2)$.  

Let $B(z,w)$ denote the Bergman kernel of $\cR$. Since $B(z,w) = \sum_{\beta \in \mathcal{S}(\cR, L^2)} \frac{e_\beta(z)\ol{e_\beta(w)}}{\norm{e_\beta}_2^2}$, it follows that
\[ \int_{\cR_t} B(z,w) e_\gamma(w)dV(w)=0.\]
Proposition~\ref{P:pv1} thus yields
\begin{equation}\label{eq-egamma}
\bm{B} e_\gamma =0.
 \end{equation}
  
 To see (ii),  let $f\in A^p(\cR)$. From Theorem~\ref{thm-schauder},  $f= \lim S_N f$ with convergence in $L^p(\cR)$. Since $\bm{B}$ is continuous on $L^p(\cR)$, 
\begin{align*}
\bm{B}f &= \lim_{N\to \infty} \bm{B} (S_N f)
= \lim_{N\to \infty} \bm{B}\left (\sum_{\abs{\alpha}_\infty \leq N} a_\alpha(f) e_\alpha\right) =\lim_{N\to \infty} \sum_{\abs{\alpha}_\infty \leq N} a_\alpha(f) \bm{B}( e_\alpha),
\end{align*}
all limits taken in $L^p$. \eqref{eq-egamma} then yields (ii). 
\end{proof}

\begin{remark}\label{R:Bterm}
Proposition \ref{P:BonA^p} does {\it not} assert that $\bm{B} f\in A^2(\cR)$ for general $f\in A^p(\cR)$ when $1<p<2$. Note that when $1<p<2$
$$\sum_{\alpha\in \cs\left(\cR, L^p\right)} a_{\alpha}e_{\alpha}\in A^p(\cR) \not\Rightarrow \sum_{\alpha\in \cs\left(\cR, L^2\right)} a_{\alpha} e_{\alpha}\in A^2(\cR),$$
 though each of the monomials in the right sum is in $A^2(\cR)$.
\end{remark}

\subsection{Sub-Bergman projections}\label{SS:subBergman} Throughout the section, assume $p\geq 2$. If $\Omega\subset \C^n$ is a bounded domain, let
\begin{equation}\label{eq-m2pdef}
G^{2,p}(\Omega) :=  \ol{\rm span}_{A^2(\Omega)} A^p(\Omega).
\end{equation}
$G^{2,p}(\Omega) \subset L^2(\Omega)$ is a closed subspace.
The {\em $L^p$ sub-Bergman projection} is defined as the orthogonal projection
\begin{equation*} \label{D:LpSubBergmanProj}
\widetilde{\bm{B}_{\Omega}^p}: L^2(\Omega)\to G^{2,p}(\Omega).
\end{equation*}
The representing kernel 
 \begin{equation*}\label{eq-subbergmanintegral} \widetilde{{\bm B}^p_\Omega}f  =\int_\Omega \widetilde{B^p_{\Omega}}(z,w)  f(w)dV(w)\end{equation*}
 is the $L^p$ sub-Bergman kernel. Subscripts are dropped when the domain is unambiguous. Since $A^p(\Omega)\subset G^{2,p}(\Omega)$, it follows that $\widetilde{\bm{B}^p}f=f$ $\forall f\in A^p(\Omega)$.
 
On a Reinhardt domain, the sub-Bergman projection assumes a concrete form. 

 \begin{proposition}  \label{prop-subbergman1}
 Let $\cR$ be a bounded Reinhardt domain in $\cx^n$ and $p\geq 2$. Then
 
 \begin{enumerate}
\item[(i)]  $G^{2,p}(\cR) = \ol{\rm span}_{A^2(\cR)}\left\{e_{\alpha}:\, \alpha \in \cs\left(\cR,L^p\right) \right\}.$
\medskip

\item[(ii)] $\widetilde{B^p}(z,w) = \sum_{\alpha \in \cs\left(\cR,L^p\right)} \frac{e_{\alpha}(z)\ol{e_{\alpha}(w)}}{\norm{e_{\alpha}}_2^2}.$
\end{enumerate}
 \end{proposition}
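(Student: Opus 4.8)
The plan is to prove both parts by exploiting the Reinhardt symmetry of $\cR$ together with the abstract definition of $G^{2,p}(\cR)$ as the $A^2$-closure of $A^p(\cR)$. The key structural fact is that monomials $\{e_\alpha : \alpha\in\cs(\cR,L^2)\}$ form an orthogonal basis of $A^2(\cR)$ (established in Section \ref{SS:TruncatedConvergence}), and that monomials in different multi-indices are mutually orthogonal in $L^2(\cR)$ because $\cR$ is invariant under the torus action.

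For part (i), I would argue by double inclusion. Since each $e_\alpha$ with $\alpha\in\cs(\cR,L^p)$ lies in $A^p(\cR)\subset G^{2,p}(\cR)$, and $G^{2,p}(\cR)$ is a closed subspace of $A^2(\cR)$, the $A^2$-closed span of these monomials is contained in $G^{2,p}(\cR)$. For the reverse inclusion, it suffices to show that every $f\in A^p(\cR)$ lies in $\ol{\rm span}_{A^2}\{e_\alpha:\alpha\in\cs(\cR,L^p)\}$, since $G^{2,p}(\cR)$ is by definition the $A^2$-closure of $A^p(\cR)$ and the span on the right is already $A^2$-closed. Here I would invoke Corollary \ref{cor-aalpha}: any $f\in A^p(\cR)$ has a Laurent expansion supported on $\cs(\cR,L^p)$, so its square partial sums $S_N f$ are finite linear combinations of the relevant monomials. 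The remaining point is to verify that $S_N f\to f$ in the $A^2$ norm rather than merely in $A^p$; this follows because $f\in A^p(\cR)\subset A^2(\cR)$ (as $p\ge 2$ and $\cR$ is bounded), and for $p=2$ the orthogonality computation in Section \ref{SS:TruncatedConvergence} already gives $A^2$-convergence of the square partial sums of any $A^2$ function. Thus $f$ lies in the $A^2$-closed span of $\{e_\alpha:\alpha\in\cs(\cR,L^p)\}$, completing part (i).

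For part (ii), the strategy is to recognize the right-hand series as the reproducing kernel of $G^{2,p}(\cR)$ and then use uniqueness of such kernels. By part (i), the normalized monomials $\{e_\alpha/\norm{e_\alpha}_2 : \alpha\in\cs(\cR,L^p)\}$ constitute an orthonormal basis of $G^{2,p}(\cR)$, since they are orthonormal in $A^2(\cR)$ by torus invariance and their $A^2$-span is exactly $G^{2,p}(\cR)$. The kernel of an orthogonal projection onto a closed subspace with orthonormal basis $\{\phi_\alpha\}$ is $\sum_\alpha \phi_\alpha(z)\ol{\phi_\alpha(w)}$ — this is the same principle recorded in \eqref{E:BergmanInfiniteSum} for the full Bergman kernel. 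Substituting $\phi_\alpha = e_\alpha/\norm{e_\alpha}_2$ produces precisely the claimed formula for $\widetilde{B^p}(z,w)$.

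The main obstacle I anticipate is the convergence issue in part (i): showing that the square partial sums of $f\in A^p(\cR)$ converge to $f$ in the $A^2$ norm. One must be careful that although Theorem \ref{thm-schauder} gives $L^p$ convergence of $S_N f$, what is needed here is $L^2$ convergence. Since $p\ge 2$ and $\cR$ is bounded we have $f\in A^2(\cR)$, and the elementary Parseval-type identity for $\|S_N f - f\|_2^2$ displayed in Section \ref{SS:TruncatedConvergence} settles this directly; so the obstacle is more a matter of care than of genuine difficulty. A secondary technical point is verifying that the series in (ii) converges appropriately (uniformly on compact subsets) so that it legitimately represents the integral kernel, which again follows from the standard reproducing-kernel estimates once orthonormality of the basis is in hand.
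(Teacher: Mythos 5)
Your proof is correct, and for part (i) it takes a genuinely more elementary route than the paper's. The paper proves the inclusion $G^{2,p}(\cR)\subset\ol{\rm span}_{A^2(\cR)}\left\{e_\alpha:\alpha\in\cs\left(\cR,L^p\right)\right\}$ by approximating a given $g\in G^{2,p}(\cR)$ first by some $g'\in A^p(\cR)$ in the $L^2$ norm, and then invoking Theorem \ref{thm-schauder} together with Corollary \ref{cor-aalpha} to approximate $g'$ in the \emph{$L^p$ norm} by a Laurent polynomial $g''$ supported on $\cs\left(\cR,L^p\right)$; the inequality $\|g'-g''\|_2\leq C\|g'-g''\|_p$ (valid because $p\geq 2$ and $\cR$ is bounded) then transfers this to the $L^2$ norm. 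You instead bypass Theorem \ref{thm-schauder} entirely: since $f\in A^p(\cR)\subset A^2(\cR)$ and its Laurent coefficients vanish off $\cs\left(\cR,L^p\right)$ by Corollary \ref{cor-aalpha}, the elementary Parseval/orthogonality computation of Section \ref{SS:TruncatedConvergence} already gives $\|S_Nf-f\|_2\to 0$, and each $S_Nf$ is a finite combination of the allowed monomials, so $f$ lies in the $A^2$-closed span. Your argument is self-contained at the Hilbert-space level and avoids the Riesz multiplier theorem hidden inside Theorem \ref{thm-schauder}; the paper's route, by contrast, costs nothing once Theorem \ref{thm-schauder} is in hand and records the slightly stronger fact that Laurent polynomials supported on $\cs\left(\cR,L^p\right)$ are dense in $A^p(\cR)$ in the $L^p$ topology, not merely in the $L^2$ topology. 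For part (ii) the two arguments coincide: the normalized monomials form an orthonormal basis of $G^{2,p}(\cR)$ by part (i) and torus invariance, and the projection kernel is the corresponding diagonal sum, exactly as in ordinary Bergman theory; your remark that one must check locally uniform convergence of this series (via bounded point evaluations on the closed subspace $G^{2,p}(\cR)\subset A^2(\cR)$) is the same standard point the paper leaves implicit.
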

 
 \begin{proof} This follows from Corollary~\ref{cor-aalpha} and Theorem~\ref{thm-schauder}. Since two norms are involved, details are given for clarity.
 Note  that $\ol{\rm span}_{A^p(\cR)}(\sF) \subset \ol{\rm span}_{A^2(\cR)}(\sF)$ for any $\sF\subset A^2(\cR)$, since $p\geq 2$ and 
 $\cR$ is bounded. Let $g\in G^{2,p}(\cR)$ and $\epsilon >0$. Definition  \ref{eq-m2pdef} says there exists $g'\in A^p(\cR)$ such that $\|g-g'\|_2<\epsilon$. Corollary~\ref{cor-aalpha} and Theorem~\ref{thm-schauder} imply there exist
 $g''\in \ol{\rm span}_{A^p(\cR)}\left\{e_{\alpha}:\, \alpha \in \cs\left(\cR,L^p\right) \right\}$ such that $\|g'-g''\|_2\leq C\|g'-g''\|_p <\epsilon$, $C$ depending on the diameter of $\cR$. Thus (i) holds.
 
 For (ii), since $\widetilde{\bm{B}^p}$ orthogonally projects onto $G^{2,p}(\cR)$, it follows that for $f\in A^p(\cR)$
\begin{equation}\label{eq-projection}
\widetilde{{\bm B}^p}f  = \sum_{\alpha\in \mathcal{S}(\Omega, L^p) }\frac{\langle f, e_\alpha\rangle}{\norm{e_\alpha}_2^2} e_\alpha.
\end{equation}
The series converges in $A^2(\cR)$. The kernel representation (ii) now follows as in ordinary  Bergman theory.
 \end{proof}
 
Let $q$ be conjugate to $p$; note $q\leq 2$. Subspaces of $A^p(\cR)$ and $A^q(\cR)$ enter the next result, and also appear in the description of dual spaces in the next section. Generalizing \eqref{eq-m2pdef},
define the subspace of $A^p(\cR)$
\begin{equation}\label{D:Gqp}
G^{q,p}(\cR):= \overline{\rm span}_{A^q(\cR)}\left\{ e_\alpha: \alpha \in  \cs\left(\cR,L^p\right)\right\}.
\end{equation}
Extending Proposition \ref{P:BonA^p} (i), define the subspace of $A^q(\cR)$
\begin{equation}\label{D:Nqp}
N^{q,p}(\cR):= \overline{\rm span}_{A^q(\cR)}\left\{ e_\alpha: \alpha \in \cs\left(\cR,L^q\right) \setminus \cs\left(\cR,L^p\right)\right\}.
\end{equation}
 
$ \widetilde{\bm{B}^p}$ is not necessarily bounded on $L^p(\cR)$. When $ \left|\widetilde{\bm{B}^p}\right|$ is $L^p$ bounded,  the following holds 

\begin{proposition} \label{prop-subbergman2} Let $\cR$ be a bounded Reinhardt domain in $\cx^n$.  Let $p\geq 2$ and $q$ be conjugate to $p$. 
 Suppose  $\left|\widetilde{{\bm B}^p}\right|$ is bounded on $L^p(\cR)$.  
 
 Then
\begin{enumerate}
\item[(i)]  $\widetilde{{\bm B}^p}$ is a projection from $L^p(\cR)$ onto  $A^p(\cR)$.
\item[(ii)]   Let $\widetilde{{\bm B}^p}^\dagger$ be the formal adjoint defined by \eqref{E:pseudoOp}.
Then  $\widetilde{{\bm B}^p}^\dagger$ is bounded on $L^q(\cR)$. For $f\in L^q(\cR)$, 
 \begin{equation}\label{eq-subbergmanadjoint2}
\widetilde{{\bm B}^p}^\dagger f  = \sum_{\alpha\in \mathcal{S}\left(\cR, L^p\right) }\frac{\langle f, e_\alpha\rangle}{\norm{e_\alpha}_2^2} e_\alpha.
\end{equation}
The square partial sums of the series converge in $A^q(\cR)$.
\item[(iii)] Consider $\widetilde{{\bm B}^p}^\dagger$ restricted to $A^q(\cR)$. Then $\ker \widetilde{{\bm B}^p}^\dagger = N^{q,p}(\cR)$, ${\rm ran }\,\, \widetilde{{\bm B}^p}^\dagger = G^{q,p}(\cR)$, and 
 $\widetilde{{\bm B}^p}^\dagger h=h$  $\forall h\in G^{q,p}(\cR)$.
\end{enumerate}
\end{proposition}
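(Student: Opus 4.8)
The plan is to reduce everything to the explicit kernel expansion in Proposition~\ref{prop-subbergman1}(ii), using that on $L^2(\cR)$ the operator $\widetilde{\bm{B}^p}$ is a self-adjoint orthogonal projection. Throughout I will use that $p\ge q$ forces $\cs(\cR,L^p)\subset\cs(\cR,L^q)$, so every monomial below lies in $A^q(\cR)$, and that $1<q\le 2$, so Theorem~\ref{thm-schauder} applies in $A^q(\cR)$. For (i), I will first recall that $\widetilde{\bm{B}^p}F=F$ for all $F\in A^p(\cR)$, noted before Proposition~\ref{prop-subbergman1}. Since $p\ge2$ and $\cR$ is bounded, $L^p(\cR)\subset L^2(\cR)$, so for $f\in L^p(\cR)$ the value $\widetilde{\bm{B}^p}f$ is the orthogonal projection of $f$ onto $G^{2,p}(\cR)\subset A^2(\cR)$, hence holomorphic; the hypothesis that $|\widetilde{\bm{B}^p}|$ is bounded on $L^p$ gives $\|\widetilde{\bm{B}^p}f\|_p\le C\|f\|_p$ by the triangle inequality, so $\widetilde{\bm{B}^p}f\in A^2(\cR)\cap L^p(\cR)=A^p(\cR)$. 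Thus $\widetilde{\bm{B}^p}$ maps $L^p(\cR)$ into $A^p(\cR)$ and fixes it pointwise, so it is an idempotent with range $A^p(\cR)$, which is (i).

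For (ii), I would first invoke Proposition~\ref{P:gen_self_adjoint}(i) with $\bm{P}=\widetilde{\bm{B}^p}$ to get that $|\widetilde{\bm{B}^p}^\dagger|$, hence $\widetilde{\bm{B}^p}^\dagger$, is bounded on $L^q(\cR)$. The main point is that on $L^2(\cR)$ one has $\widetilde{\bm{B}^p}^\dagger=\widetilde{\bm{B}^p}$: the kernel of an orthogonal projection satisfies $\overline{\widetilde{B^p}(z,w)}=\widetilde{B^p}(w,z)$, so the integral \eqref{E:pseudoOp} defining $\widetilde{\bm{B}^p}^\dagger$ reproduces $\widetilde{\bm{B}^p}$. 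Hence for $f\in L^2(\cR)$ the claimed formula is exactly \eqref{eq-projection}, converging in $A^2(\cR)$ and so in $A^q(\cR)$. To reach a general $f\in L^q(\cR)$, I would use that $L^2(\cR)$ is dense in $L^q(\cR)$ and pick $f_k\in L^2(\cR)$ with $f_k\to f$ in $L^q$. Boundedness gives $\widetilde{\bm{B}^p}^\dagger f_k\to\widetilde{\bm{B}^p}^\dagger f$ in $L^q$, and since each $\widetilde{\bm{B}^p}^\dagger f_k\in A^2(\cR)\subset A^q(\cR)$ while $A^q(\cR)$ is closed in $L^q(\cR)$, the limit $\widetilde{\bm{B}^p}^\dagger f$ is holomorphic. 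Continuity of the coefficient functionals (Proposition~\ref{prop-coefficients}) and H\"older continuity of $g\mapsto\langle g,e_\alpha\rangle$ then identify the $\alpha$-th Laurent coefficient of $\widetilde{\bm{B}^p}^\dagger f$ as $\langle f,e_\alpha\rangle/\|e_\alpha\|_2^2$ for $\alpha\in\cs(\cR,L^p)$ and as $0$ otherwise; Corollary~\ref{cor-aalpha} produces the series \eqref{eq-subbergmanadjoint2}, and Theorem~\ref{thm-schauder} gives the $A^q(\cR)$ convergence of its square partial sums.

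For (iii), I would restrict \eqref{eq-subbergmanadjoint2} to $h\in A^q(\cR)$ and note $\langle h,e_\alpha\rangle=a_\alpha(h)\|e_\alpha\|_2^2$ for $\alpha\in\cs(\cR,L^p)$, which follows from $S_Nh\to h$ in $L^q$ (Theorem~\ref{thm-schauder}) and monomial orthogonality. This shows $\widetilde{\bm{B}^p}^\dagger$ acts on $A^q(\cR)$ by deleting every Laurent coefficient indexed outside $\cs(\cR,L^p)$, i.e. $\widetilde{\bm{B}^p}^\dagger h=\sum_{\alpha\in\cs(\cR,L^p)}a_\alpha(h)e_\alpha$. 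Since the Laurent expansion determines $h$ and the coefficient functionals are continuous, $h\in\ker\widetilde{\bm{B}^p}^\dagger$ exactly when all its $\cs(\cR,L^p)$-coefficients vanish, that is $h\in N^{q,p}(\cR)$ from \eqref{D:Nqp}; the image is exactly the functions whose expansions are supported on $\cs(\cR,L^p)$, namely $G^{q,p}(\cR)$ from \eqref{D:Gqp}. Finally any $h\in G^{q,p}(\cR)$ already has all coefficients in $\cs(\cR,L^p)$, so the deletion does nothing and $\widetilde{\bm{B}^p}^\dagger h=h$.

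The hard part will be the passage in (ii) from $L^2(\cR)$ to $L^q(\cR)$, since $L^q(\cR)\not\subset L^2(\cR)$: the naive term-by-term coefficient extraction along a Cauchy torus would require controlling $\int_\cR|\widetilde{B^p}(z,\cdot)|\,|f(z)|\,dV(z)$ on measure-zero tori, which is awkward. The route above avoids this through the identity $\widetilde{\bm{B}^p}^\dagger=\widetilde{\bm{B}^p}$ on $L^2(\cR)$ together with density and continuity; the single delicate point there is that $\widetilde{\bm{B}^p}^\dagger f$ stays holomorphic in the limit, which is guaranteed by closedness of $A^q(\cR)$ in $L^q(\cR)$.
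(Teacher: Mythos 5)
Your proof is correct, and its skeleton matches the paper's: part (i) via $G^{2,p}(\cR)\cap L^p(\cR)=A^p(\cR)$, the $L^q$-boundedness of $\widetilde{{\bm B}^p}^\dagger$ via Proposition \ref{P:gen_self_adjoint}(i), convergence via Theorem \ref{thm-schauder}, and part (iii) read off from \eqref{eq-subbergmanadjoint2}. Where you genuinely diverge is in how \eqref{eq-subbergmanadjoint2} is established. The paper simply asserts that the representation ``follows from Proposition \ref{prop-subbergman1}(ii)'', i.e.\ from the kernel expansion, which implicitly requires interchanging the infinite sum defining $\widetilde{B^p}(z,w)$ with integration against an arbitrary $f\in L^q(\cR)$ --- a step that is not obviously justified when $q<2$, since such $f$ need not lie in $L^2(\cR)$ and the partial sums of the kernel are not dominated in any evident way. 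You instead prove the formula first on $L^2(\cR)$, where conjugate symmetry of the kernel gives $\widetilde{{\bm B}^p}^\dagger=\widetilde{{\bm B}^p}$ and the orthogonal expansion is immediate, and then extend to $L^q(\cR)$ by density, using $L^q$-boundedness, closedness of $A^q(\cR)$ in $L^q(\cR)$, continuity of the coefficient functionals (Proposition \ref{prop-coefficients}), and Corollary \ref{cor-aalpha}. This buys rigor exactly at the point you correctly flag as delicate, at the cost of length; the paper's route is shorter but leaves that interchange to the reader. Two small remarks: \eqref{eq-projection} is stated in the paper only for $f\in A^p(\cR)$, but as you use it the same orthogonal-projection argument gives it for every $f\in L^2(\cR)$, so your citation is harmless; and in (iii) your identification of $\ker\widetilde{{\bm B}^p}^\dagger$ with $N^{q,p}(\cR)$ tacitly uses both directions of the equivalence ``all $\cs(\cR,L^p)$-coefficients vanish iff $h\in N^{q,p}(\cR)$'' --- one direction needs continuity of the $a_\alpha$ on $A^q(\cR)$, the other needs Theorem \ref{thm-schauder} to realize $h$ as an $A^q$-limit of its partial sums --- but you name both ingredients, and the paper's own proof of (iii) is no more detailed on this point.
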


\begin{proof}
The proof of (i) follows directly from the definition of $\widetilde{{\bm B}^p_\cR}$ and the fact that the intersection $G^{2,p}(\Omega)\cap L^p(\Omega)= A^p(\Omega)$.

The first statement in (ii) follows from Proposition \ref{P:gen_self_adjoint} (i). Representation \eqref{eq-subbergmanadjoint2} follows from Proposition \ref{prop-subbergman1} (ii). 
Convergence of the series in $A^q(\cR)$ follows from Theorem~\ref{thm-schauder}.

For (iii), let $\alpha\in \mathcal{S}\left(\cR, L^q\right)\setminus \mathcal{S}\left(\cR, L^p\right)$. Then \eqref{eq-subbergmanadjoint2} shows  $\widetilde{{\bm B}^p}^\dagger (e_\alpha)=0$.  On the other hand, if 
$f\in A^q(\cR)\setminus N^{q,p}(\cR)$, the Laurent series expansion of $f$ must contain a nonzero coefficient of a monomial $e_\beta$ with $\beta\in \mathcal{S}\left(\cR, L^p\right)$. Formula \eqref{eq-subbergmanadjoint2} shows $\widetilde{{\bm B}^p}^\dagger (f)\neq 0$. Thus $\ker \widetilde{{\bm B}^p}^\dagger = N^{q,p}(\cR)$. Additionally, \eqref{eq-subbergmanadjoint2} shows that the range of  $\widetilde{{\bm B}^p}^\dagger$ is  the closure of the linear span of the family $\left\{e_\alpha:\mathcal{S}\left(\cR, L^p\right)\right\}$, i.e. the subspace $G^{q,p}(\cR)$.  The fact that  $\widetilde{{\bm B}^p}^\dagger$ restricts to the identity on $G^{q,p}(\cR)$
follows from \eqref{eq-subbergmanadjoint2} as well.
\end{proof}

\subsection{Representation of $A^p(\cR)'$}\label{SS:ReinhardtDuals}

\begin{proposition}\label{P:ReinhardtDuals}
Let $\cR$ be a bounded Reinhardt domain in $\cx^n$. Let $p\geq 2$ and $q$ be conjugate to $p$. 
 Suppose  $\left|\widetilde{{\bm B}^p}\right|$ is bounded on $L^p(\cR)$.  

\begin{enumerate}
\item[(i)] The map $\Phi_p: A^q(\cR)\to A^p(\cR)'$ is surjective and $\ker\Phi_p = N^{q,p}(\cR)$.
\medskip
\item[(ii)] There is an explicit linear homeomorphism of  Banach spaces
\begin{equation}\label{eq-apdual}
 A^p(\cR)'\cong  G^{q,p}(\cR).
 \end{equation}
 
\item[(iii)] There is a topological direct sum representation
\begin{equation}\label{eq-directsum}
A^q(\cR)'=\Phi_q(A^p(\cR))\oplus \ol{\rm span}_{A^q(\cR)'}\left\{a_\alpha: \alpha\in \mathcal{S}\left(\cR, L^q\right)\setminus \mathcal{S}\left(\cR, L^p\right)\right\}.
\end{equation}
\end{enumerate}

\end{proposition}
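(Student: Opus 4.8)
The plan is to read off all three parts from the mapping properties of $\widetilde{\bm{B}^p}$ and its adjoint recorded in Proposition \ref{prop-subbergman2}, combined with the general duality results of Section \ref{S:general} and the expansions of Section \ref{S:Reinhardt}. For the surjectivity in part (i), I would apply Theorem \ref{T:surjectivity} with $\bm{P}=\widetilde{\bm{B}^p}$ and $\cg=A^p(\cR)$: its hypothesis (i) is the standing assumption on $\abs{\widetilde{\bm{B}^p}}$, hypothesis (ii) is $\widetilde{\bm{B}^p}|_{A^p(\cR)}=\mathrm{id}$ from Proposition \ref{prop-subbergman2}(i), and hypothesis (iii) is the inclusion $\mathrm{Ran}(\widetilde{{\bm B}^p}^\dagger)=G^{q,p}(\cR)\subset A^q(\cR)$ from Proposition \ref{prop-subbergman2}(iii). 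To compute $\ker\Phi_p$, I would expand $g\in A^q(\cR)$ in its Laurent series and test $\Phi_p(g)$ against the monomials $e_\alpha$ with $\alpha\in\cs(\cR,L^p)$, whose span is dense in $A^p(\cR)$ by Theorem \ref{thm-schauder}. Orthogonality of the $\{e_\alpha\}$ gives $\langle e_\alpha,g\rangle=\overline{a_\alpha(g)}\,\norm{e_\alpha}_2^2$ --- the term-by-term integration being justified by the $L^q$-convergence of the Laurent series of $g$ (Theorem \ref{thm-schauder}) paired via H\"older with $e_\alpha\in L^p$ --- so $g\in\ker\Phi_p$ precisely when $a_\alpha(g)=0$ for every $\alpha\in\cs(\cR,L^p)$. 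By Corollary \ref{cor-aalpha} and Theorem \ref{thm-schauder} this is exactly the condition $g\in N^{q,p}(\cR)$.

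For part (ii), Corollary \ref{C:GenDuality} together with the surjectivity just proved gives the natural identification $A^p(\cR)'\cong A^q(\cR)/\ker\Phi_p=A^q(\cR)/N^{q,p}(\cR)$. I would then note that $\pi:=\widetilde{{\bm B}^p}^\dagger|_{A^q(\cR)}$ is a \emph{bounded} idempotent on $A^q(\cR)$ --- bounded by Proposition \ref{prop-subbergman2}(ii), and idempotent with range $G^{q,p}(\cR)$ and kernel $N^{q,p}(\cR)$ by Proposition \ref{prop-subbergman2}(iii) --- so that $A^q(\cR)=G^{q,p}(\cR)\oplus N^{q,p}(\cR)$ is a topological direct sum. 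Hence $A^q(\cR)/N^{q,p}(\cR)\cong G^{q,p}(\cR)$, and unwinding the isomorphisms shows that the explicit homeomorphism realizing \eqref{eq-apdual} is the restriction $\Phi_p|_{G^{q,p}(\cR)}$: it is injective because $G^{q,p}(\cR)\cap N^{q,p}(\cR)=\{0\}$, surjective because $\Phi_p$ annihilates $N^{q,p}(\cR)$, and a homeomorphism by the open mapping theorem.

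For part (iii), I would dualize the same projection $\pi$. Its Banach transpose $\pi'$ is a bounded idempotent on $A^q(\cR)'$, and the standard identities $\mathrm{ran}\,\pi'=(\ker\pi)^\perp$ and $\ker\pi'=(\mathrm{ran}\,\pi)^\perp$ produce the topological direct sum $A^q(\cR)'=(N^{q,p}(\cR))^\perp\oplus(G^{q,p}(\cR))^\perp$. To match \eqref{eq-directsum} I would identify the summands concretely. From $\Phi_q(e_\alpha)=\norm{e_\alpha}_2^2\,a_\alpha$ for $\alpha\in\cs(\cR,L^p)$, together with the closedness of $\mathrm{ran}\,\Phi_q$ (Corollary \ref{C:GenDuality}) and the fact that $\Phi_q(g)$ annihilates $N^{q,p}(\cR)$ for every $g\in A^p(\cR)$, one obtains $\Phi_q(A^p(\cR))=(N^{q,p}(\cR))^\perp$. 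For the complementary summand, Proposition \ref{prop-dualconv} writes each $\lambda\in(G^{q,p}(\cR))^\perp$ as the $A^q(\cR)'$-limit of $\sum_{\abs{\alpha}_\infty\le N}\lambda(e_\alpha)a_\alpha$, in which only indices $\alpha\in\cs(\cR,L^q)\setminus\cs(\cR,L^p)$ appear, since $\lambda(e_\alpha)=0$ for $\alpha\in\cs(\cR,L^p)$; this identifies $(G^{q,p}(\cR))^\perp$ with $\overline{\mathrm{span}}\{a_\alpha:\alpha\in\cs(\cR,L^q)\setminus\cs(\cR,L^p)\}$ and yields \eqref{eq-directsum}.

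The main obstacle is not the bookkeeping of the decompositions but ensuring that they are \emph{topological}, i.e. that the summands are closed and the associated projections are bounded. Everything here hinges on the boundedness of $\widetilde{{\bm B}^p}^\dagger$ on $L^q(\cR)$, which by Proposition \ref{P:gen_self_adjoint} is exactly what the hypothesis on $\abs{\widetilde{\bm{B}^p}}$ buys; this single fact simultaneously furnishes the bounded projection $\pi$ on $A^q(\cR)$ used in part (ii) and, upon transposition, the bounded projection on $A^q(\cR)'$ used in part (iii). A secondary but delicate point is matching the abstract annihilators $(N^{q,p}(\cR))^\perp$ and $(G^{q,p}(\cR))^\perp$ with the concrete closed spans of coefficient functionals, for which the functional-expansion result of Proposition \ref{prop-dualconv} is the indispensable tool.
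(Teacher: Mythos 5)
Your proposal is correct and follows essentially the same route as the paper's proof: Theorem \ref{T:surjectivity} applied to $\widetilde{\bm{B}^p}$ for surjectivity, monomial orthogonality plus norm convergence of Laurent series for $\ker\Phi_p=N^{q,p}(\cR)$, the decomposition $A^q(\cR)=N^{q,p}(\cR)\oplus G^{q,p}(\cR)$ induced by $\widetilde{{\bm B}^p}^\dagger$ together with Corollary \ref{C:GenDuality} for part (ii), and the annihilator decomposition identified through Proposition \ref{prop-dualconv} for part (iii). The only cosmetic difference is in sourcing hypothesis (iii) of Theorem \ref{T:surjectivity}: the paper invokes Proposition \ref{P:gen_self_adjoint} and holomorphy of the kernel in the free variable, while you cite Proposition \ref{prop-subbergman2} (strictly speaking part (ii), which handles all of $L^q(\cR)$, rather than part (iii), which concerns only the restriction to $A^q(\cR)$); both yield $\mathrm{Ran}\bigl(\widetilde{{\bm B}^p}^\dagger\bigr)\subset A^q(\cR)$.
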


\begin{proof} Let $\bm{P}=\widetilde{\bm{B}^p_{\cR}}$ for notational economy.

To see (i),  check the hypotheses of Theorem \ref{T:surjectivity}. Hypothesis (i) of Theorem \ref{T:surjectivity}  is satisfied by assumption. Hypothesis (ii) of the same theorem holds since $A^p(\Omega)\subset G^{2,p}(\Omega)$.
Proposition \ref{P:gen_self_adjoint} implies $\bm{P}^\dagger$ is $L^q$ bounded; since the representing kernel of $\bm{P}^\dagger$ is holomorphic in the free variable, hypothesis (iii) is satisfied. Theorem \ref{T:surjectivity} thus
says $\Phi_p$ is surjective. To determine $\ker\Phi_p$, direct computation gives
\[ \Phi_p(e_\alpha)(e_\beta)= \int_\cR e_\beta \ol{e_\alpha}dV= \norm{e_\alpha}_2^2 \delta_{\alpha,\beta},\]
where $\delta_{\alpha,\beta}$ is the Kronecker symbol. Thus $N^{q,p}(\cR)\subset \ker \Phi_p$.  If $f\in A^q(\cR)\setminus N^{q,p}(\cR)$, there exists $\beta\in \mathcal{S}\left(\cR, L^p\right)$ such that in expansion \eqref{E:ApLaurentSeries} $a_\beta\neq 0$. Then 
$\Phi_p(f)(e_\beta)= a_\beta  \norm{e_\beta}_2^2 \neq 0$, showing $\ker \Phi_p =N^{q,p}(\cR)$.

For (ii), first note the direct sum representation
\begin{equation}\label{eq-aqdirectsum}
A^q(\cR)= N^{q,p}(\cR)\oplus G^{q,p}(\cR).
\end{equation}
$N^{q,p}(\cR)\cap G^{q,p}(\cR)=\{0\}$ holds since the sets are spanned by independent sets of monomials. If $f\in A^q(\cR)$, write
\[ f = \left(f- \bm{P}^\dagger f\right) + \bm{P}^\dagger f.\]
Proposition~\ref{prop-subbergman2} (iii) implies $\ker\bm{P}^\dagger=N^{q,p}(\cR)$ and  ${\rm ran }\,\,\bm{P}^\dagger =G^{q,p}(\cR)$. Therefore \eqref{eq-aqdirectsum} holds.
By (i), $\Phi_p:A^q(\cR)\to A^p(\cR)'$ is surjective and $\ker \Phi_p =N^{q,p}(\cR)$. Thus \eqref{eq-aqdirectsum} and Corollary \ref{C:GenDuality} give

\[ A^p(\cR)' \cong  \frac{A^q(\cR)}{\ker \Phi_p} = \frac{A^q(\cR)}{N^{q,p}(\cR)} =G^{q,p}(\cR),\]
as claimed.

For (iii), let $N^{q,p}(\cR)^\circ$ be the annihilator of $N^{q,p}(\cR)$:
$$N^{q,p}(\cR)^\circ=\left\{\lambda\in A^q(\cR)': \lambda (f)=0\quad\forall f\in N^{q,p}(\cR)\right\}.$$
The decomposition \eqref{eq-aqdirectsum} implies a natural isomorphism $N^{q,p}(\cR)^\circ =G^{q,p}(\cR)$. Proposition~\ref{prop-dualconv} implies that $N^{q,p}(\cR)^\circ$ can be identified with $\lambda\in A^q(\cR)'$
of the form
\[ \lambda =\sum_{\alpha\in \mathcal{S}\left(\cR, L^p\right)} c_\alpha a_\alpha\]
for complex $c_\alpha$, the square partial sums of the series converging in $A^q(\cR)'$.
Thus $N^{q,p}(\cR)^\circ = \ol{\rm span}_{A^q(\cR)'}\left\{a_\alpha : \alpha \in \mathcal{S}\left(\cR, L^p\right)\right\}$.
The same analysis shows $$G^{q,p}(\cR)^\circ = \ol{\rm span}_{A^q(\cR)'}\left\{a_\alpha : \alpha \in  \mathcal{S}\left(\cR, L^q\right)\setminus\mathcal{S}\left(\cR, L^p\right)\right\}.$$
 \eqref{eq-aqdirectsum} yields a direct sum decomposition of the dual spaces
\[ A^q(\cR)' = N^{q,p}(\cR)^\circ \oplus G^{q,p}(\cR)^\circ.\]
The final step is to show the identity $\Phi_q(A^p(\cR))= N^{q,p}(\cR)^\circ$. However if $\alpha\in\cs\left(\cR, L^p\right)$, direct computation yields
 \[ a_\alpha = \frac{1}{\norm{e_\alpha}_2^2} \cdot\Phi_q(e_\alpha),\]
which implies the identity, and therefore \eqref{eq-directsum}.
\end{proof}
\subsection{Holomorphic Sobolev spaces}

For a multi-index $\beta\in \mathbb{N}^n$, let $\partial^\beta$ denote the partial differential operator $\partial^\beta = \partial^{\abs{\beta}}/{\partial z_1^{\beta_1}\dots \partial z_n^{\beta_n}}$ on $\cx^n$. 
Note that 
$\partial^\beta e_\alpha =C(\alpha,\beta) \cdot e_{\alpha-\beta}$,
where
\[ C(\alpha,\beta)= \begin{cases}  0 & \text{if there is a $j$ such that $\beta_j>\alpha_j \geq 0$} \\
\prod_{j=1}^n \prod_{\ell=0}^{\beta_j-1} (\alpha_j-\ell),&\text{otherwise.}\end{cases}\]
The empty product, in the case $\beta_j=0$, is defined to be 1. 
Let $1\leq p \leq \infty$ and $k\in\mathbb{N}$. Consider the holomorphic Sobolev spaces defined 
\begin{equation*} 
A^p_k(\Omega)=\left\{f\in \mathcal{O}(\Omega):\, \partial^\beta f \in A^p(\Omega) \text{ for each } \beta\in \mathbb{N}^n, \text{ with } \abs{\beta}\leq k\right\}.
\end{equation*}
Note that $A^p_0(\Omega)=A^p(\Omega)$.

If $\cR\subset \cx^n$ is  Reinhardt, let $\mathcal{S}\left(\cR, A^p_k\right)$ denote the set of
$\alpha\in \mathbb{Z}^n$ such that $e_\alpha\in A^p_k(\cR)$. The following generalization of Corollary~\ref{cor-aalpha} holds.

\begin{proposition}\label{prop-sobolev} Let $\cR$ be a bounded Reinhardt domain, $1\leq p\leq\infty$, and $k\in\mathbb{N}$.  Let  $f\in A^p_k(\cR)$, with
Laurent expansion given by \eqref{eq-laurent}. 

Then if $\alpha\notin \cs\left(\cR,A^p_k\right)$,
 $a_\alpha(f)=0$.
\end{proposition}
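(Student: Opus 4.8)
The plan is to reduce the statement to Corollary~\ref{cor-aalpha}, applied not to $f$ itself but to its holomorphic derivatives $\partial^\beta f$, which lie in $A^p(\cR)$ by the very definition of $A^p_k(\cR)$. The mechanism is that a derivative shifts the Laurent index by $\beta$ and rescales by the factor $C(\alpha,\beta)$, so a monomial that fails to be in $A^p_k$ must have a derivative whose leading surviving monomial is not $L^p$-allowable, which forces the corresponding coefficient of $f$ to vanish.

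First I would unwind the hypothesis $\alpha\notin\cs\left(\cR,A^p_k\right)$. By definition this says $e_\alpha\notin A^p_k(\cR)$, so there is a multi-index $\beta\in\mathbb{N}^n$ with $\abs{\beta}\le k$ for which $\partial^\beta e_\alpha = C(\alpha,\beta)\,e_{\alpha-\beta}\notin A^p(\cR)$. Since the zero function is trivially in $A^p(\cR)$, this forces both $C(\alpha,\beta)\neq 0$ and $e_{\alpha-\beta}\notin A^p(\cR)$; equivalently, $\alpha-\beta\notin\cs\left(\cR,L^p\right)$. I would fix one such $\beta$ for the remainder of the argument.

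Next I would differentiate the Laurent series \eqref{eq-laurent} of $f$ term by term. Because \eqref{eq-laurent} converges uniformly on compact subsets of $\cR$, Weierstrass's theorem for holomorphic functions permits term-by-term differentiation with the same mode of convergence, giving $\partial^\beta f = \sum_{\mu\in\mathbb{Z}^n} a_\mu(f)\,C(\mu,\beta)\,e_{\mu-\beta}$. By uniqueness of the Laurent expansion, the coefficient of $e_{\alpha-\beta}$ in $\partial^\beta f$ is contributed only by the index $\mu=\alpha$, and hence equals $C(\alpha,\beta)\,a_\alpha(f)$. Now $\partial^\beta f\in A^p(\cR)$ by hypothesis, and $\alpha-\beta\notin\cs\left(\cR,L^p\right)$, so Corollary~\ref{cor-aalpha} applied to $\partial^\beta f$ shows this coefficient vanishes: $C(\alpha,\beta)\,a_\alpha(f)=0$. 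Since $C(\alpha,\beta)\neq 0$, I conclude $a_\alpha(f)=0$.

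I do not expect a serious obstacle here, as the proof is essentially bookkeeping layered on top of Corollary~\ref{cor-aalpha}. The two points that require care are the justification of term-by-term differentiation (handled by uniform convergence on compacta) and the observation that the witnessing derivative must correspond to a genuinely non-$L^p$ monomial, i.e. that the relevant $C(\alpha,\beta)$ is nonzero; this is precisely where the explicit structure of $C(\alpha,\beta)$ enters, ensuring that a vanishing derivative never masquerades as a failure of $A^p_k$-membership.
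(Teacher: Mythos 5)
Your proof is correct and is essentially the paper's own argument: both extract a witnessing $\beta$ with $\abs{\beta}\le k$, observe that $\partial^\beta e_\alpha = C(\alpha,\beta)e_{\alpha-\beta}\notin A^p(\cR)$ forces $C(\alpha,\beta)\neq 0$ and $\alpha-\beta\notin\cs\left(\cR,L^p\right)$, differentiate the Laurent series term by term, and apply Corollary~\ref{cor-aalpha} to $\partial^\beta f\in A^p(\cR)$ to conclude $C(\alpha,\beta)\,a_\alpha(f)=0$. The only cosmetic differences are that the paper treats $k=0$ separately while your argument handles it uniformly, and you make explicit the uniform-convergence justification for term-by-term differentiation that the paper leaves implicit.
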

\begin{proof}
The case $k=0$ is Corollary~\ref{cor-aalpha}. For $k\geq 1$, let $f\in A^p_k(\cR)$, and $\alpha\not \in \cs\left(\cR,A^p_k\right)$. Thus there is a $\beta\in\mathbb{N}^n$, such that $\abs{\beta}\leq k$, and $\partial^\beta e_\alpha\not \in A^p(\cR)$. Since  $\partial^\beta e_\alpha=C(\alpha,\beta) e_{\alpha-\beta}$, this implies two facts: (i) $C(\alpha,\beta)\neq 0$ and (ii) $e_{\alpha-\beta}\not \in A^p(\Omega)$.

As $f\in A^p_k(\Omega)$, necessarily $\partial^\beta f \in A^p(\Omega)$. Corollary~\ref{cor-aalpha} and fact (ii) imply a third fact: (iii) $a_{\alpha-\beta}\left(\partial^\beta f\right)=0$.
 Differentiating the Laurent 
expansion \eqref{eq-laurent} gives
\[ \partial^\beta f = \sum_{\gamma\in \mathbb{Z}^n}a_\gamma(f) \partial^\beta e_\gamma
= \sum_{\gamma\in \mathbb{Z}^n}a_\gamma(f)  C(\gamma,\beta) e_{\gamma-\beta}.\]
Comparing coefficients yields $a_{\alpha-\beta}(\partial^\beta f)=C(\alpha,\beta) \cdot a_\alpha(f)$. This implies $a_\alpha(f)=0$, by facts (i) and (iii).
\end{proof}

\begin{corollary}\label{cor-sibony} 
Let $\cR$ be a Reinhardt domain in $\C^n$ and $\widetilde{\cR}$ be the smallest complete Reinhardt domain containing $\cR$.  Suppose that for some $1\leq p \leq \infty$,
\begin{equation*} 
\bigcap_{k=0}^\infty \cs\left(\cR, A^p_k\right)= \mathbb{N}^n.
\end{equation*}
Then every $f\in \mathcal{C}^\infty(\ol{\cR})\cap \mathcal{O}(\cR)$ extends holomorphically to $\wt{\cR}$.
\end{corollary}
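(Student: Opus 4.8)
The plan is to reduce the statement to the assertion that the Laurent series of $f$ contains only nonnegative-exponent monomials, and then to recognize such a series as a power series whose domain of convergence is automatically a complete Reinhardt domain containing $\widetilde{\cR}$.

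First I would observe that, because $\cR$ is bounded and $f\in \mathcal{C}^\infty(\ol{\cR})$, every derivative $\partial^\beta f$ is continuous on the compact set $\ol{\cR}$, hence bounded, and therefore lies in $L^p(\cR)$ for the given exponent $p$ (indeed for every $1\le p\le\infty$). Thus $f\in A^p_k(\cR)$ for every $k\in\mathbb{N}$. Applying Proposition~\ref{prop-sobolev} for each $k$ shows that the Laurent coefficient $a_\alpha(f)$ must vanish whenever $\alpha\notin\cs(\cR,A^p_k)$. Intersecting over all $k$ and invoking the hypothesis $\bigcap_{k\ge 0}\cs(\cR,A^p_k)=\mathbb{N}^n$, I conclude that $a_\alpha(f)=0$ for every $\alpha\notin\mathbb{N}^n$. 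Hence the Laurent expansion of $f$ collapses to a genuine power series $f(z)=\sum_{\alpha\in\mathbb{N}^n} a_\alpha(f)\,z^\alpha$.

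Next I would invoke the elementary theory of power series on Reinhardt domains. The expansion converges uniformly on compact subsets of $\cR$, so for each fixed $z\in\cR$ the monomials $a_\alpha(f)z^\alpha$ are bounded; by the multivariable Abel lemma the series then converges absolutely and locally uniformly on every open polydisc $\{w:|w_j|<|z_j|\}$. Consequently the domain of convergence $\wh{\cR}$ of the power series is a nonempty open set containing $\cR$, and it is a complete Reinhardt domain (the standard fact that the interior of the convergence set of a nonnegative-exponent power series is complete Reinhardt). Since $\widetilde{\cR}$ is by definition the smallest complete Reinhardt domain containing $\cR$, minimality forces $\widetilde{\cR}\subseteq\wh{\cR}$. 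The sum of the power series is holomorphic on $\wh{\cR}$ and agrees with $f$ on $\cR$, so it furnishes the desired holomorphic extension of $f$ to $\widetilde{\cR}$.

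The smoothness-to-integrability reduction and the bookkeeping with Proposition~\ref{prop-sobolev} are routine. The conceptual crux — the step I would be most careful about — is the passage from convergence on $\cR$ to convergence on $\widetilde{\cR}$: one must use that a power series with only nonnegative exponents has a \emph{complete} Reinhardt domain of convergence, so that the completion operation producing $\widetilde{\cR}$ from $\cR$ costs nothing for such a series. This is precisely where the hypothesis $\bigcap_k\cs(\cR,A^p_k)=\mathbb{N}^n$ is essential: it guarantees the vanishing of all negative-exponent coefficients, without which the Laurent series could genuinely fail to continue across the coordinate hyperplanes.
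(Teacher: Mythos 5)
Your proof is correct and follows essentially the same route as the paper's: apply Proposition~\ref{prop-sobolev} for every $k$ to kill all negative-exponent Laurent coefficients, then use the fact that a nonnegative-exponent power series has a complete Reinhardt domain of convergence, which by minimality contains $\wt{\cR}$. You have merely filled in the details (boundedness of derivatives giving $f\in A^p_k(\cR)$, the Abel-lemma step, and the minimality argument) that the paper's three-line proof leaves implicit.
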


\begin{proof} Since $f\in A^p_k(\cR)$ for each $k$, Proposition~\ref{prop-sobolev} says
the Laurent series of $f$ contains no monomials with negative exponents. The Laurent series thus reduces to a Taylor series. 
The series necessarily converges in some neighborhood of zero and defines an analytic continuation of $f$ to $\wt{\cR}$.
\end{proof}

\begin{example}\label{Ex:SibonyHartogs}
Consider the Hartogs triangle $\h$.  It is a classical fact that any function holomorphic {\em in a neighborhood} of $\overline{\h}$ extends holomorphically to the bidisc.  However a stronger result is true: any $f\in \mathcal{C}^{\infty}(\ol{\h})\cap \co(\h)$ extends to a holomorphic function on the bidisc.

To see this, write
\begin{equation*}
e_\alpha(z)= z_1^{\alpha_1}z_2^{\alpha_2} = \left(\frac{z_1}{z_2}\right)^{\alpha_1} z_2^{\alpha_1+\alpha_2},
\end{equation*}
and recall that $|z_1|<|z_2|$ if $(z_1, z_2)\in\h$. It follows that

\begin{equation}\label{E:H1BoundedMonomials}
\cs\left(\h, A_0^\infty\right)=\left\{\left(\alpha_1, \alpha_2\right): \alpha_1\geq 0,\ \alpha_1+\alpha_2 \geq 0\right\}.
\end{equation}
On the other hand, since $\partial^\beta e_\alpha=C(\alpha, \beta)e_{\alpha-\beta}$, $\partial^\beta e_\alpha\in A_0^\infty(\h)$ if $\alpha_1\geq \beta_1$ and $\alpha_1+\alpha_2\geq \beta_1+\beta_2$. Therefore,

\begin{equation}\label{E:H1BoundedDerivativeMonomials}
\cs(\h, A^\infty_k) = \left\{\left(\alpha_1, \alpha_2\right):\alpha_1\geq 0,\alpha_2 \geq 0\right\} \cup \{(\alpha_1, \alpha_2):\alpha_1\geq k,\ \alpha_1+\alpha_2 \geq k\}.
\end{equation}

The situation is illustrated below, in the fourth quadrant of the lattice point diagram of $\h$.  The lattice points $\alpha=(\alpha_1,\alpha_2)$ on and above the line indexed by $A_k^{\infty}$ correspond to monomials $e_{\alpha}(z)=z^{\alpha}$ with $\alpha\in \cs\left(\h,A_k^{\infty}\right)$.  Differentiation with respect to $z_1$ (resp. $z_2$) is denoted by $\partial_1$ (resp. $\partial_2$), and is represented (up to a constant multiple) by a shift left (resp. a shift down).  
{
\begin{center}
\begin{tikzpicture}

\centering

\draw[-{latex}, thick] (0,0) -- (7,0) node[anchor=south] {$\alpha_1$};
\draw[-{latex}, thick] (0,0) -- (0,-5) node[anchor=east] {$\alpha_2$};

\draw[-stealth, thin] (0,0) -- (4.5,-4.5)  node[anchor=west] {$A_0^{\infty}$};
\draw[-stealth, thin] (1,0) -- (5.5,-4.5) node[anchor=west] {$A^{\infty}_1$};
\draw[-stealth, thin] (2,0) -- (6.5,-4.5) node[anchor=west] {$A^{\infty}_2$};
\draw[-stealth, thin] (3,0) -- (6.5,-3.5) node[anchor=west] {$A^{\infty}_3$};
\draw[-stealth, thin] (4,0) -- (6.5,-2.5) node[anchor=west] {$A^{\infty}_4$};

\draw[-stealth, thick] (3,-2) -- (2.25,-2) node[anchor=south] {$\partial_1$};
\draw[-stealth, thick] (3,-2) -- (3,-2.75) node[anchor=west] {$\partial_2$};

\filldraw[black] (0,0) circle (1.5pt) node[anchor=south east] {$(0,0)$};
\filldraw[black] (1,0) circle (1.5pt) node[anchor=south] {$(1,0)$};
\filldraw[black] (2,0) circle (1.5pt) ;
\filldraw[black] (3,0) circle (1.5pt) ;
\filldraw[black] (4,0) circle (1.5pt) ;
\filldraw[black] (5,0) circle (1.5pt) ;
\filldraw[black] (6,0) circle (1.5pt) ;

\filldraw[black] (0,-1) circle (1.5pt) node[anchor=east] {$(0,-1)$};
\filldraw[black] (1,-1) circle (1.5pt) ;
\filldraw[black] (2,-1) circle (1.5pt) ;
\filldraw[black] (3,-1) circle (1.5pt) ;
\filldraw[black] (4,-1) circle (1.5pt) ;
\filldraw[black] (5,-1) circle (1.5pt) ;
\filldraw[black] (6,-1) circle (1.5pt) ;

\filldraw[black] (0,-2) circle (1.5pt) ;
\filldraw[black] (1,-2) circle (1.5pt) ;
\filldraw[black] (2,-2) circle (1.5pt) ;
\filldraw[black] (3,-2) circle (1.5pt) ;
\filldraw[black] (4,-2) circle (1.5pt) ;
\filldraw[black] (5,-2) circle (1.5pt) ;
\filldraw[black] (6,-2) circle (1.5pt) ;

\filldraw[black] (0,-3) circle (1.5pt) ;
\filldraw[black] (1,-3) circle (1.5pt) ;
\filldraw[black] (2,-3) circle (1.5pt) ;
\filldraw[black] (3,-3) circle (1.5pt) ;
\filldraw[black] (4,-3) circle (1.5pt) ;
\filldraw[black] (5,-3) circle (1.5pt) ;
\filldraw[black] (6,-3) circle (1.5pt) ;

\filldraw[black] (0,-4) circle (1.5pt) ;
\filldraw[black] (1,-4) circle (1.5pt) ;
\filldraw[black] (2,-4) circle (1.5pt) ;
\filldraw[black] (3,-4) circle (1.5pt) ;
\filldraw[black] (4,-4) circle (1.5pt) ;
\filldraw[black] (5,-4) circle (1.5pt) ;
\filldraw[black] (6,-4) circle (1.5pt) ;

\end{tikzpicture}

\end{center}
}

Each $e_{\alpha}$, with $\alpha$ in the fourth quadrant, is a finite number of derivatives away from becoming an unbounded function on $\h$.  This implies
\begin{equation*} 
\bigcap_{k=0}^\infty \cs\left(\h, A^\infty_k\right)= \{(\alpha_1, \alpha_2):\alpha_1\geq 0,\alpha_2 \geq 0\}.
\end{equation*}

 Corollary~\ref{cor-sibony}  thus gives the claimed result.
\end{example}

\begin{remark}
This property of the Hartogs triangle was first proved in Section 5 of \cite{Sibony1975}, by a  different argument.
\end{remark}



\section{Generalized Hartogs Triangles}\label{S:GenHartogs}

Following \cite{EdhMcN16b}, for $\gamma > 0$ define the domains  
\begin{equation}\label{D:genHartogs}
\h_{\gamma} := \left\{(z_1, z_2) \in \C^2: |z_1|^{\gamma} < |z_2| < 1 \right\};
\end{equation}
call $\h_{\gamma}$ the power-generalized Hartogs triangle of exponent $\gamma$. The main result in \cite{EdhMcN16b} is that the Bergman projection $\bm{B}_{\h_\gamma}=\bm{B}_\gamma$ is ``defective" as an $L^p$
operator and, moreover, whether $\gamma\in\Q$ or not determines the extent of its deficiency. The precise result is

\begin{theorem}[\cite{EdhMcN16b}]\label{T:BergmanLp} Let $\h_\gamma$ be given by \eqref{D:genHartogs}.

\begin{itemize}
\item[(i)] Let $\gamma = \frac mn$, where $m,n \in \Z^+$ with $\gcd (m,n) = 1$.   

Then $\bm{B}_\gamma: L^p\left(\mathbb{H}_{\gamma}\right)\to A^p\left(\mathbb{H}_{\gamma}\right)$ boundedly if and only if $p \in \left(\frac{2m+2n}{m+n+1}, \frac{2m+2n}{m+n-1}\right)$.
\smallskip
\item[(ii)] Let $\gamma$ be irrational.

Then $\bm{B}_\gamma: L^p\left(\mathbb{H}_{\gamma}\right)\to A^p\left(\mathbb{H}_{\gamma}\right)$ boundedly if and only if $p = 2$.
\end{itemize}
\end{theorem}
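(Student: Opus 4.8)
The plan is to use the Reinhardt symmetry of $\h_\gamma$ at every stage. Since $\h_\gamma$ is Reinhardt, the monomials $\{e_\alpha\}_{\alpha\in\cs(\h_\gamma,L^2)}$ form an orthogonal basis of $A^2(\h_\gamma)$, so I would begin by writing $B_\gamma(z,w)=\sum_\alpha e_\alpha(z)\overline{e_\alpha(w)}/\|e_\alpha\|_2^2$ and evaluating $\|e_\alpha\|_p^p$ through the polar-coordinate formula \eqref{eq-lpnorm}. An elementary integration over the shadow $\{r_1^\gamma<r_2<1\}$ shows that $\cs(\h_\gamma,L^p)$ consists of the lattice points with $\alpha_1\geq 0$ and $p\alpha_2+2+(p\alpha_1+2)/\gamma>0$; the boundary of this region is a line of slope $-1/\gamma$ in the $(\alpha_1,\alpha_2)$-plane, and whether this slope is rational or irrational is precisely what drives the dichotomy between the two cases.

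For $\gamma=m/n$ I would transfer the problem to a product domain. The proper holomorphic map $\Psi(z_1,z_2)=(z_1^m z_2^{-n},\,z_2)$ exhibits $\h_{m/n}$ as an $m$-sheeted branched cover of $\D\times\D^*$, with deck group $\Z/m\Z$ acting by $z_1\mapsto\omega z_1$ and Jacobian $\det\Psi'(z)=m\,z_1^{m-1}z_2^{-n}$. Because $A^2(\D^*)=A^2(\D)$, the Bergman projection of $\D\times\D^*$ is the tensor product $\bm{B}_\D\otimes\bm{B}_\D$ and is bounded on $L^p$ for every $1<p<\infty$. Bell's transformation law $\bm{B}_{\h_{m/n}}(u\cdot(g\circ\Psi))=u\cdot(\bm{B}_{\D\times\D^*}g)\circ\Psi$, together with the decomposition of $L^p(\h_{m/n})$ into the $m$ characters of the deck group, should reduce $\bm{B}_\gamma$ on each character subspace to the product projection conjugated by multiplication by a fixed power of $z_1^{m-1}z_2^{-n}$. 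The $L^p$ boundedness of $\bm{B}_\gamma$ then becomes a weighted-norm inequality for $\bm{B}_\D\otimes\bm{B}_\D$ whose admissible weight range is governed entirely by the integrability of these Jacobian powers; I expect the thresholds to appear exactly at $p=\frac{2m+2n}{m+n+1}$ and $p=\frac{2m+2n}{m+n-1}$.

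To make the interior bound concrete I would verify boundedness of $|\bm{B}_\gamma|$ directly by Schur's test, using the closed-form kernel \eqref{E:B_h_{k}} (extended from $\h_k$ to $\h_{m/n}$), whose denominator carries the factors $(1-z_2\bar w_2)^2$ and $(z_2\bar w_2-z_1^m\bar w_1^m)^2$. The test function would be a product of powers $|z_2|^{a}\,\mathrm{dist}(z,b\h_\gamma)^{b}$, and the admissible exponents in $\int|B_\gamma(z,\cdot)|\,\phi^{s}\,dV\lesssim\phi(z)^{s}$ would be arranged to cover the range conjugate to $(\frac{2m+2n}{m+n+1},\frac{2m+2n}{m+n-1})$. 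For necessity at and beyond the endpoints, I would use antiholomorphic monomial test functions as in Example~\ref{E:FailureApprox3}: applying $\bm{B}_\gamma$ to $\bar z_1^{a}\bar z_2^{b}$ produces, by angular orthogonality, a single monomial $e_\alpha$, and computing the ratio of $L^p$ norms along a family approaching the singular line $|z_1|^\gamma=|z_2|$ shows the operator norm diverges precisely when $p$ leaves the interval.

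For irrational $\gamma$ no algebraic map $\Psi$ is available, so I would argue directly from the angular structure: since $\bm{B}_\gamma$ commutes with the torus action, it is diagonalized by the angular Fourier modes and acts on each mode as a one-dimensional radial multiplier, and its $L^p$ operator norm is the supremum of these radial norms. Because the critical line $\alpha_2=-1-(\alpha_1+1)/\gamma$ has irrational slope, the residues $(\alpha_1+1)/\gamma \bmod 1$ are equidistributed, so the lattice points of $\cs(\h_\gamma,L^2)$ accumulate arbitrarily close to this line from both sides; for any fixed $p\neq 2$ this lets me select modes along which the radial multipliers blow up, forcing unboundedness. I expect the two main obstacles to be, first, making Bell's transference rigorous across the branch locus of $\Psi$ — in particular controlling the deck-group decomposition and the non-$L^2$ monomials that $\bm{B}_\gamma$ must annihilate — and second, matching the Schur exponents of the sufficiency argument to the exact divergence thresholds produced by the test functions, so that the endpoints come out sharp rather than merely up to a non-optimal interval.
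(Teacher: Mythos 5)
You should first note that this paper does not actually prove Theorem \ref{T:BergmanLp}: it is imported verbatim from \cite{EdhMcN16b}, and the method of that paper is the one echoed in Section \ref{S:GenHartogs} here --- sum the kernel along the lattice lines $n\alpha_1+m\alpha_2=k$ to get closed forms and type-$A$ bounds (as in \eqref{E:subkernelBound} and \eqref{E:BinftyEstimate}), feed these into the Schur-test machinery of Proposition \ref{P:typeA_mapping} for sufficiency, and test against explicit monomial-type functions for necessity. Your lattice-point computation and your necessity mechanism match that template, but your main sufficiency route is genuinely different: transfer by the proper degree-$m$ map $\Psi(z_1,z_2)=(z_1^mz_2^{-n},z_2)$ and Bell's transformation law to weighted estimates for $\bm{B}_\D\otimes\bm{B}_\D$ on $\D\times\D^*$. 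That route is legitimate (it is how \cite{ChaZey16} treats $\h_1$, where $\Psi$ is a biholomorphism and the weight is $|w_2|^{2-p}$), and it buys you one-variable weighted Bergman theory in place of kernel summation, at the price of the character decomposition and fractional-power branch issues you correctly flag. As written, though, the analytic core --- the weighted-norm inequality whose admissible range produces the endpoints $\frac{2m+2n}{m+n\pm1}$ --- is only asserted, and your backup Schur test is described only at the level of which test functions you would try; so the sufficiency half is a plan, not a proof.

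The concrete gap is in your necessity argument. Angular orthogonality gives $\bm{B}_\gamma\bigl(\bar z_1^{a}\bar z_2^{b}\bigr)=c\,z_1^{-a}z_2^{-b}$ when $(-a,-b)\in\cs\left(\h_{m/n},L^2\right)$, and $0$ otherwise; since allowability forces $-a\ge 0$, and since for $p>2$ an antiholomorphic monomial lies in $L^p$ only if $a\ge 0$, the only nonzero outputs your test class can produce are the monomials $z_2^{-b}$, i.e.\ lattice points on the $\alpha_2$-axis. But by \eqref{E:(p,m/n)-allowable MultiIndices}, for $p$ at or just above the upper endpoint $\frac{2m+2n}{m+n-1}$ the only $L^2$-allowable indices that fail to be $L^p$-allowable lie on the extremal line $n\alpha_1+m\alpha_2=1-m-n$, and that line meets $\{\alpha_1=0\}$ at a lattice point only when $m\mid n-1$. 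For example, on $\h_{3/2}$ the extremal line is $2\alpha_1+3\alpha_2=-4$, whose allowable points $(1,-2),(4,-4),\dots$ all have $\alpha_1>0$; your tests detect unboundedness only for $p\ge \frac{10}{3}$, missing the range $\bigl[\tfrac52,\tfrac{10}{3}\bigr)$ where the theorem asserts failure. The fix, consistent with \cite{EdhMcN16b} and with Example \ref{E:FailureApprox3}, is to use mixed test functions $z^{\nu}\bar z^{\mu}$: e.g.\ on $\h_{3/2}$ the bounded function $z_1\bar z_2^2$ has $\bm{B}_\gamma\bigl(z_1\bar z_2^2\bigr)=c\,z_1z_2^{-2}\notin L^{p}$ for every $p\ge\frac52$, which is sharp. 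The same correction is needed in your irrational-case sketch (the single-monomial outputs must be generated by mixed monomials), and there you also overstate what is required: the claim that the $L^p$ operator norm \emph{equals} the supremum of the mode-wise radial norms is unclear for $p\neq 2$, but it is also unnecessary --- the easy lower bound, obtained by testing on a single mode, is all the unboundedness argument needs.
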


Focus on $\h_{m/n}$, $\frac mn\in\Q$, and integrability exponents $p\geq 2$. 
The proof of (i) in Theorem \ref{T:BergmanLp} actually shows more: the Bergman projection on $\h_{m/n}$ fails to generate $A^p$ functions from $L^p$ data for certain $p$. To apply Theorems \ref{T:surjectivity} and \ref{T:easy_approx}, operators are needed that create $A^p$ functions for $p$ outside the range in Theorem \ref{T:BergmanLp} (i).    

The sub-Bergman projections defined in Section \ref{SS:subBergman} are such operators. Verification of this is done over several sections, leading to 
  
 \begin{theorem}\label{T:SubBergmanLpMapping} Let $\h_{m/n}$,  where $m,n \in \Z^+$ with $\gcd (m,n) = 1$, be given by \eqref{D:genHartogs}.
 
For each $p\ge2$, the sub-Bergman projection $\widetilde{\bm{B}^p}: L^p\left(\h_{m/n}\right) \to A^p\left(\h_{m/n}\right)$ satisfies 

\begin{itemize}
\item[(i)] $\left|\widetilde{\bm{B}^p}\right|$ is bounded on $L^p(\h_{m/n})$
\smallskip
\item[(ii)] $\widetilde{\bm{B}^p}h =h\quad\forall h\in A^p(\h_{m/n})$.
\end{itemize}
\end{theorem}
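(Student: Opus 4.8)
The statement (ii) requires nothing beyond the definition of the sub-Bergman projection. On any bounded domain one has $A^p(\Omega)\subset G^{2,p}(\Omega)$, so the orthogonal projection $\widetilde{\bm{B}^p}$ onto $G^{2,p}(\h_{m/n})$ fixes every element of $A^p(\h_{m/n})$; moreover for $p\ge 2$ and bounded $\h_{m/n}$ the inclusion $L^p\subset L^2$ makes $\widetilde{\bm{B}^p}$ already defined on $L^p$ by restriction of the $L^2$ projection. All the content is in (i), and the plan is to reduce it to a pointwise estimate on the \emph{summed} sub-Bergman kernel followed by a Schur-type test adapted to the Reinhardt structure of $\h_{m/n}$. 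Throughout, let $q$ be the exponent conjugate to $p$.

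First I would make the kernel explicit. By Proposition~\ref{prop-subbergman1}(ii),
\[
\widetilde{B^p}(z,w)=\sum_{\alpha\in\cs(\h_{m/n},L^p)}\frac{e_\alpha(z)\overline{e_\alpha(w)}}{\|e_\alpha\|_2^2},
\]
so two ingredients are needed: the exact description of the index set $\cs(\h_{m/n},L^p)$ and the monomial norms $\|e_\alpha\|_2^2$. Both come from a polar-coordinate computation, namely Lemma~\ref{L:(p,m/n)-allowable MultiIndices and Norm}: the condition $e_\alpha\in A^p(\h_{m/n})$ is a pair of linear inequalities in $(\alpha_1,\alpha_2)$ with coefficients depending on $m,n,p$, so $\cs(\h_{m/n},L^p)$ is, up to finitely many boundary corrections, a translated rational cone in $\Z^2$, while $\|e_\alpha\|_2^{-2}$ is an explicit product of linear factors in $\alpha$. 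Writing $s=z_1\overline{w_1}$ and $t=z_2\overline{w_2}$, the kernel becomes $\sum_\alpha c(\alpha)\,s^{\alpha_1}t^{\alpha_2}$ with $c(\alpha)$ a product of linear factors; summing such a polynomial-weighted monomial series over a rational cone (differentiating geometric series, grouping by the relevant residue classes) collapses to a rational function of $s,t$ whose denominator is a product of powers of $(1-t)$ and of a factor $t-s^{k}$ for a suitable integer $k$ determined by $m,n$ — exactly the shape of the Bergman-kernel formula \eqref{E:B_h_{k}}.

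Next I would extract a pointwise bound on $\bigl|\widetilde{B^p}(z,w)\bigr|$ from this closed form. The decisive gain over the full Bergman kernel is that the monomials in $\cs(\h_{m/n},L^2)\setminus\cs(\h_{m/n},L^p)$ — precisely the ones responsible for the failure of $\bm{B}_{m/n}$ to be $L^p$ bounded outside the interval of Theorem~\ref{T:BergmanLp}(i) — have been deleted, which weakens the boundary singularity of the kernel. I would then run Schur's test in the form used in \cite{EdhMcN16b}: choosing test weights $|z_1|^{a}|z_2|^{b}$ and using the Reinhardt symmetry to reduce the two Schur inequalities, of which
\[
\int_{\h_{m/n}}\bigl|\widetilde{B^p}(z,w)\bigr|\,\bigl(|w_1|^{a}|w_2|^{b}\bigr)^{q}\,dV(w)\ \lesssim\ \bigl(|z_1|^{a}|z_2|^{b}\bigr)^{q}
\]
is representative, to one-dimensional beta-integral estimates. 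Convergence of those beta integrals for admissible $(a,b)$ then yields boundedness of $|\widetilde{\bm{B}^p}|$ on $L^p(\h_{m/n})$.

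The main obstacle is the joint bookkeeping in the two middle steps: the cone $\cs(\h_{m/n},L^p)$, the closed form of the kernel, and the admissible Schur weights all depend on $m,n,p$, and I must verify that the weights can be chosen so that the resulting beta integrals converge simultaneously for \emph{every} $p\ge 2$. Tracking which boundary lattice points are included, and confirming that deleting the non-$L^p$ monomials genuinely removes the offending singularity rather than merely displacing it, is the delicate part; this is presumably where the several sections of technical verification culminating in Proposition~\ref{T:TechnicalSubBergmanStatement} are spent.
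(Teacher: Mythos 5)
Your proposal is correct and follows essentially the same route as the paper: part (ii) is exactly the paper's observation that $A^p(\h_{m/n})\subset G^{2,p}(\h_{m/n})$ makes the reproduction property definitional, and part (i) is proved in the paper precisely by writing the kernel as a monomial sum over the cone $\cs(\h_{m/n},L^p)$, summing it in closed form (organized there as line-kernels $b^{\,p_k}$ plus the $L^\infty$ cone, grouping by residue classes mod $m$), extracting the pointwise ``type-$A$'' bound with denominator $|1-z_2\bar w_2|^2\,|z_2^n\bar w_2^n-z_1^m\bar w_1^m|^2$, and invoking the Schur-test mapping result of \cite{EdhMcN16b} (Proposition \ref{P:typeA_mapping}). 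The bookkeeping you flag as the delicate point is exactly what Propositions \ref{P:ApClasses}--\ref{T:TechnicalSubBergmanStatement} carry out, with the deleted non-$L^p$ monomials raising the type from $2n+k/m$ toward $2n$ and thereby enlarging the boundedness interval to contain $p$.
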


Theorem \ref{T:SubBergmanLpMapping} contains Theorem \ref{T:intro3} from the Introduction and is proved in Section \ref{SS:ProofSubBergmanLp}.
If $q$ is conjugate to $p$, $\left|\widetilde{\bm{B}^p}\right|$ also maps $L^q(\h_{m/n})$ into $A^q(\h_{m/n})$ boundedly, but the map is no longer surjective, see Remark \ref{R:NonSurjectivitySubBergman}. 
An explicit description of the set of $L^p$-allowable multi-indices plays a crucial role in the proof of Theorem \ref{T:SubBergmanLpMapping}.  
\subsection{Integrability and Orthogonality}

\subsubsection{Holomorphic monomials in $L^p\left(\h_{m/n}\right)$}\label{SS:HolomorphicMonomials}

Let $\h_{m/n}$, $m,n \in \Z^+$ with $\gcd (m,n) = 1$,  be a fixed power-generalized Hartogs triangle throughout the section.   The following calculation was sketched in \cite{EdhMcN16b}.

\begin{lemma}\label{L:(p,m/n)-allowable MultiIndices and Norm}
Let $p \in [1,\infty)$.  The set of $L^p$-allowable multi-indices is
\begin{equation}\label{E:(p,m/n)-allowable MultiIndices}
\cs\left(\h_{m/n},L^p\right) = \left\{ \alpha=(\alpha_1,\alpha_2) : \alpha_1 \ge 0, \ \ n\alpha_1+m\alpha_2\ge \left\lfloor -\frac{2}{p}(m+n) +1 \right\rfloor \right\}.
\end{equation}
For $\alpha \in \cs(\h_{m/n},L^p)$, 
\begin{equation}\label{E:(p,m/n)-allowable Norm}
\norm{e_{\alpha}}_{L^p(\h_{m/n})}^p = \frac{4m\pi^2}{n(p\alpha_1+2)^2 + m(p\alpha_1+2)(p\alpha_2+2)}
\end{equation}
\end{lemma}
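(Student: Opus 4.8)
The plan is to evaluate $\norm{e_\alpha}_{L^p(\h_{m/n})}^p$ directly in polar coordinates, reading off the allowability condition from the convergence of the resulting real integral and the explicit norm from its value. First I would apply the polar-coordinate formula \eqref{eq-lpnorm}. Since $|e_\alpha|$ is constant on each torus, with $\norm{(e_\alpha)_r}_{L^p(\tb^2)}^p = (2\pi)^2 \bigl(r_1^{\alpha_1} r_2^{\alpha_2}\bigr)^p$, this gives
\[ \norm{e_\alpha}_{L^p(\h_{m/n})}^p = (2\pi)^2 \int_{\abs{\h_{m/n}}} r_1^{\,p\alpha_1+1}\, r_2^{\,p\alpha_2+1}\, dr_1\, dr_2. \]
The Reinhardt shadow is $\abs{\h_{m/n}} = \{(r_1,r_2) : 0\le r_1 < r_2^{\,n/m},\ 0 < r_2 < 1\}$, obtained by raising the defining inequality $r_1^{\,m/n} < r_2$ to the power $n/m$. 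Before integrating I would note that $e_\alpha \in \co(\h_{m/n})$ forces $\alpha_1 \ge 0$: the slice $z_1 = 0$ meets $\h_{m/n}$ while $z_2$ never vanishes there, so a negative power of $z_1$ is not holomorphic. Thus $\alpha_1 \ge 0$ is part of membership in $A^p$, and the remaining issue is the pure integrability of a monomial with $\alpha_1 \ge 0$.

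Next I would compute the iterated integral, innermost in $r_1$. Because $\alpha_1 \ge 0$, the exponent $p\alpha_1 + 1 \ge 1 > -1$, so the inner integral always converges and equals $r_2^{\,(n/m)(p\alpha_1+2)}/(p\alpha_1+2)$. Substituting, the norm becomes a positive constant times $\int_0^1 r_2^{\,p\alpha_2 + 1 + (n/m)(p\alpha_1+2)}\, dr_2$, which converges if and only if its exponent exceeds $-1$. Clearing the factor $m$, this is exactly
\[ n(p\alpha_1+2) + m(p\alpha_2+2) > 0, \qquad\text{equivalently}\qquad n\alpha_1 + m\alpha_2 > -\tfrac{2}{p}(m+n). \]
The left-hand side is an integer, so the strict inequality is equivalent to $n\alpha_1 + m\alpha_2 \ge \lfloor -\tfrac{2}{p}(m+n)\rfloor + 1$; since $\lfloor x\rfloor + 1 = \lfloor x+1\rfloor$ for every real $x$, this is precisely the stated condition \eqref{E:(p,m/n)-allowable MultiIndices}. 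Together with $\alpha_1 \ge 0$ this characterizes $\cs(\h_{m/n}, L^p)$.

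Finally, on the allowable range both antiderivatives are elementary: the $r_2$-integral equals $\bigl(p\alpha_2 + 2 + \tfrac{n}{m}(p\alpha_1+2)\bigr)^{-1}$. Collecting the factors $(2\pi)^2 = 4\pi^2$ and $1/(p\alpha_1+2)$ and clearing denominators by $m$ yields
\[ \norm{e_\alpha}_{L^p(\h_{m/n})}^p = \frac{4m\pi^2}{n(p\alpha_1+2)^2 + m(p\alpha_1+2)(p\alpha_2+2)}, \]
which is \eqref{E:(p,m/n)-allowable Norm}. I expect no serious analytic obstacle; the computation is routine once the shadow is identified. The step demanding the most care is the bookkeeping that converts the strict real inequality governing convergence into the floor-function form of the index set---in particular keeping the holomorphy constraint $\alpha_1 \ge 0$ logically separate from the integrability constraint, and handling the boundary case in which $-\tfrac{2}{p}(m+n)$ is itself an integer, where strictness of the inequality is exactly what the floor expression encodes.
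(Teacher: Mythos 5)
Your proposal is correct and follows essentially the same route as the paper's proof: polar-coordinate integration over the Reinhardt shadow, the holomorphy constraint $\alpha_1 \ge 0$ from the slice $z_1 = 0$, convergence of the $r_2$-integral giving the strict inequality $n\alpha_1 + m\alpha_2 > -\tfrac{2}{p}(m+n)$, and integrality converting this to the floor-function form. The only difference is presentational: you spell out the equivalence $\lfloor x \rfloor + 1 = \lfloor x+1 \rfloor$ that the paper states without comment in \eqref{E:(p,m/n)-allowable MultiIndices NONstrict}.
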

\begin{proof}
Note there are points in $\h_{m/n}$ where $z_1=0$, which forces $\alpha_1 \ge 0$.  Computing in polar coordinates 
\begin{align*}
\int_{\h_{m/n}} \left|z^{\alpha} \right|^p \,dV 
&= 4\pi^2 \int_0^1 r_2^{p\alpha_2+1} \int_0^{r_2^{n/m}} r_1^{p\alpha_1+1} \,dr_1 dr_2 \\
&= \frac{4\pi^2}{p\alpha_1+2}\int_0^1 r_2^{p\alpha_2+1+\frac nm(p\alpha_1+2)} \, dr_2.
\end{align*}
This integral converges if and only if the exponent $p\alpha_2+1+\frac nm(p\alpha_1+2) > -1$.  From here, \eqref{E:(p,m/n)-allowable Norm} easily follows.  To see \eqref{E:(p,m/n)-allowable MultiIndices}, notice that since $\alpha_1, \alpha_2, m,n\in \Z$,
\begin{equation}\label{E:(p,m/n)-allowable MultiIndices NONstrict}
p\alpha_2+2+\frac nm (p\alpha_1+2)>0 \ \Longleftrightarrow \ n\alpha_1+m\alpha_2 \ge \left\lfloor -\frac{2}{p}(m+n) +1 \right\rfloor.
\end{equation}

\end{proof}

Examine the sets $\cs\left(\h_{m/n},L^p\right)$ as functions of $p\in [1,\infty)$. The floor function in \eqref{E:(p,m/n)-allowable MultiIndices} shows that
$$\cs\left(\h_{m/n},L^p\right)=\cs\left(\h_{m/n},L^{p\pm\epsilon}\right)$$
if $\epsilon >0$ is small, unless $-\frac{2}{p}(m+n) +1\in\Z$. The lattice points in $\cs\left(\h_{m/n},L^p\right)$ are therefore stable except for certain exceptional $p$. Call these exceptional values {\it thresholds}. Note that $\cs\left(\h_{m/n},L^t\right)\subset \cs\left(\h_{m/n},L^s\right)$ if $s<t$, so $\cs\left(\h_{m/n},L^p\right)$ jumps to a smaller set of lattice points as $p$ increases past a threshold value.

The next result makes this stabilization precise and shows there are only a finite number of thresholds for a given $\h_{m/n}$.

\begin{proposition}\label{P:ApClasses}
There are exactly $2m+2n$ thresholds associated to $\h_{m/n}$.  They occur when $p_k=\frac{2m+2n}{1-k}$ for $k\in\{ 1-2m-2n, 2-2m-2n,\dots, -1, 0\}$. 

Consider the corresponding partition of $[1,\infty)$ 
\begin{equation}\label{E:partition}
\left[1,\infty \right) = \bigcup_{k= 1-2m-2n}^0 \left[\,p_k,p_{k+1}\right),\qquad p_k=\frac{2m+2n}{1-k}.
\end{equation}
 Then for any $p\in[\,p_k,p_{k+1})$, 
\begin{equation}\label{E:pkAllowableMultiIndices}
\cs\left(\h_{m/n},L^p\right) = \left\{(\alpha_1,\alpha_2) : \alpha_1 \ge 0, \ \ n\alpha_1+m\alpha_2 \ge k \right\}=\cs\left(\h_{m/n},L^{\,p_k}\right),
\end{equation}
and
\begin{equation}\label{E:boundedAllowableMultiIndices}
\cs\left(\h_{m/n},L^{\infty}\right) = \left\{(\alpha_1,\alpha_2) : \alpha_1 \ge 0, \ \ n\alpha_1+m\alpha_2 \ge 0 \right\} = \cs\left(\h_{m/n},L^{2m+2n}\right),
\end{equation}
\end{proposition}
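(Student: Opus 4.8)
The plan is to read everything off the floor expression in Lemma~\ref{L:(p,m/n)-allowable MultiIndices and Norm}, whose only $p$-dependence sits in the integer $\lfloor -\frac{2}{p}(m+n)+1\rfloor$. Abbreviate $N=m+n$ and set $\phi(p)=-\frac{2N}{p}+1$ for $p\in[1,\infty)$. The function $\phi$ is continuous and strictly increasing (since $\phi'(p)=2N/p^2>0$), with $\phi(1)=1-2N$ and $\phi(p)\nearrow 1$ as $p\to\infty$, the value $1$ never being attained. Consequently $\lfloor\phi(p)\rfloor$ is a nondecreasing integer-valued step function, and by \eqref{E:(p,m/n)-allowable MultiIndices} the set $\cs(\h_{m/n},L^p)$ can change only where $\lfloor\phi\rfloor$ jumps, i.e. where $\phi(p)\in\Z$.

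First I would locate the jump points by solving $\phi(p)=k$ for $k\in\Z$, which gives $p=\frac{2N}{1-k}$. Requiring $p\in[1,\infty)$ forces $1-k>0$ and $1-k\le 2N$, so $k$ ranges exactly over $\{1-2N,\dots,-1,0\}$, a set of $2N=2m+2n$ integers; the corresponding values are the candidate thresholds $p_k=\frac{2m+2n}{1-k}$, with $p_{1-2N}=1$ and $p_0=2m+2n$. To confirm each is a genuine threshold, so that there are exactly $2m+2n$ of them and not fewer, I would invoke $\gcd(m,n)=1$: by Bézout the linear form $n\alpha_1+m\alpha_2$ takes every integer value, and replacing $(\alpha_1,\alpha_2)$ by $(\alpha_1+m,\alpha_2-n)$ leaves the form unchanged while raising $\alpha_1$, so some admissible lattice point with $\alpha_1\ge 0$ realizes each integer level $k$. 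Hence the defining inequality $n\alpha_1+m\alpha_2\ge k$ strictly tightens as $p$ crosses $p_k$, and the set genuinely shrinks there.

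Next I would establish \eqref{E:pkAllowableMultiIndices}. On $[p_k,p_{k+1})$ the monotonicity of $\phi$ together with $\phi(p_k)=k$ and $\phi(p_{k+1})=k+1$ gives $\phi(p)\in[k,k+1)$, hence $\lfloor\phi(p)\rfloor=k$; substituting into \eqref{E:(p,m/n)-allowable MultiIndices} yields the stated description and shows it coincides with $\cs(\h_{m/n},L^{p_k})$. The partition \eqref{E:partition} then follows since the $p_k$ increase from $1$ to $2m+2n$, the final interval $[p_0,p_1)$ being read as $[2m+2n,\infty)$ with the formal convention $p_1=\infty$.

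The only step not handled purely by the floor formula, and the one I regard as the genuine obstacle, is the $L^\infty$ statement \eqref{E:boundedAllowableMultiIndices}, since Lemma~\ref{L:(p,m/n)-allowable MultiIndices and Norm} is proved only for finite $p$. Here I would argue directly: on $\h_{m/n}$ one has $|z_1|<|z_2|^{n/m}$ and $0<|z_2|<1$, so for $\alpha_1\ge 0$ the estimate $|z^\alpha|\le |z_2|^{(n/m)\alpha_1+\alpha_2}$ holds, whence $e_\alpha\in A^\infty(\h_{m/n})$ whenever $\frac{n}{m}\alpha_1+\alpha_2\ge 0$, i.e. $n\alpha_1+m\alpha_2\ge 0$; the reverse implication comes from letting $|z_2|\to 0$ along $|z_1|\approx|z_2|^{n/m}$, where the monomial blows up once $n\alpha_1+m\alpha_2<0$. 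This matches the $k=0$ description, giving $\cs(\h_{m/n},L^\infty)=\cs(\h_{m/n},L^{2m+2n})$ and completing the proof.
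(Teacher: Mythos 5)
Your proposal is correct and follows essentially the same route as the paper: both analyze the integer-valued step function $\lfloor -\tfrac{2}{p}(m+n)+1\rfloor$ from Lemma~\ref{L:(p,m/n)-allowable MultiIndices and Norm}, locate its jumps at $p_k=\tfrac{2m+2n}{1-k}$, conclude stabilization of $\cs\left(\h_{m/n},L^p\right)$ on each interval $[\,p_k,p_{k+1})$, and treat the $L^\infty$ case by a direct estimate using $|z_1|^m<|z_2|^n<1$. Your two small additions --- the B\'ezout argument confirming that each candidate threshold is a genuine one (since $\gcd(m,n)=1$ guarantees lattice points with $\alpha_1\ge 0$ on every line $n\alpha_1+m\alpha_2=k$), and the explicit blow-up argument for the reverse inclusion in \eqref{E:boundedAllowableMultiIndices} --- are details the paper leaves implicit, and they strengthen rather than alter the argument.
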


\medskip
\begin{remark}
\eqref{E:boundedAllowableMultiIndices} says every $e_\alpha\in A^{2m+2n}\left(\h_{m/n}\right)$ is necessarily bounded. This generalizes statement \eqref{E:H1BoundedMonomials} on $\h_1$.
\end{remark}

\begin{proof}
Define 
$\ell_{m,n}(p) := -\frac{2}{p}(m+n) +1,\, p\in [1,\infty)$.
The function $\ell_{m,n}(p)$ is increasing and takes values in the interval $[1-2m-2n, 1)$.  Note $\ell_{m,n}(p) = k \in \Z$ if and only if
$p = \frac{2m+2n}{1-k}$.

Rewrite the partition in \eqref{E:partition}:
\begin{align*}\label{E:IkIntervalDecomp}
\left[1,\infty\right) &= \bigcup_k \left[\frac{2m+2n}{1-k},\frac{2m+2n}{-k} \right) := \bigcup_k J_k,
\end{align*}
where the union is taken over $k\in\{ 1-2m-2n, 2-2m-2n,\dots, -1, 0\}$.  
Suppose $p,p'\in J_k$  for some $J_k$.  Then 
\begin{align*}
k\in \left(-\frac{2}{p}(m+n),-\frac{2}{p}(m+n)+1 \right] \cap \left(-\frac{2}{p'}(m+n),-\frac{2}{p'}(m+n)+1 \right],
\end{align*}
which in turn implies
$\left\lfloor \ell_{m,n}(p) \right\rfloor = k =  \left\lfloor \ell_{m,n}(p') \right\rfloor$, and
shows \eqref{E:pkAllowableMultiIndices} holds.

To see  \eqref{E:boundedAllowableMultiIndices}, let $\alpha = (\alpha_1,\alpha_2) \in \cs\left(\h_{m/n},L^{2m+2n}\right)$. Equation \eqref{E:(p,m/n)-allowable MultiIndices} says that $\alpha_1 \ge 0$ and $n\alpha_1+m\alpha_2 \ge 0$.  Since $|z_1|^m < |z_2|^n<1$ if $z\in\h_{m/n}$, it follows that
\begin{align*}
\left|z_1^{\alpha_1}z_2^{\alpha_2} \right|^m = \left|\frac{z_1^m}{z_2^n} \right|^{\alpha_1} \left|z_2\right|^{n\alpha_1+m\alpha_2} <1,
\end{align*}
which says $\alpha\in \cs\left(\h_{m/n},L^{\infty}\right)$.
\end{proof}








\subsubsection{An example; pairing Monomials}\label{SS:Pairing Monomials}

Consider the domain $\h_2$. Proposition \ref{P:ApClasses} says there are $6$ thresholds associated to $\h_2$:
\bigskip

{
\centering

\begin{tikzpicture}

\draw[-{latex}, thick] (0,0) -- (11,0) node[anchor=south] {$\alpha_1$};
\draw[-{latex}, thick] (0,0) -- (0,-5) node[anchor=east] {$\alpha_2$};

\draw[-stealth, thin] (0,0) -- (10,-5) node[anchor=west] {$L^6 =L^\infty$}; 
\draw[-stealth, thin] (0,-.5) -- (9,-5)  node[anchor=west] {$L^3$}; 
\draw[-stealth, thin] (0,-1) -- (8,-5)  node[anchor=west] {$L^2$}; 
\draw[-stealth, thin] (0,-1.5) -- (7,-5) node[anchor=west] {$L^\frac32$}; 
\draw[-stealth, thin] (0,-2) -- (6,-5)  node[anchor=west] {$L^\frac65$}; 
\draw[-stealth, thin] (0,-2.5) -- (5,-5)  node[anchor=west] {$L^1$}; 

\filldraw[black] (0,0) circle (1.5pt) node[anchor=south east] {$(0,0)$};
\filldraw[black] (1,0) circle (1.5pt) node[anchor=south] {$(1,0)$};
\filldraw[black] (2,0) circle (1.5pt) ;
\filldraw[black] (3,0) circle (1.5pt) ;
\filldraw[black] (4,0) circle (1.5pt) ;
\filldraw[black] (5,0) circle (1.5pt) ;
\filldraw[black] (6,0) circle (1.5pt) ;
\filldraw[black] (7,0) circle (1.5pt) ;
\filldraw[black] (8,0) circle (1.5pt) ;
\filldraw[black] (9,0) circle (1.5pt) ;
\filldraw[black] (10,0) circle (1.5pt) ;

\filldraw[black] (0,-1) circle (1.5pt) node[anchor=east] {$(0,-1)$};
\filldraw[black] (1,-1) circle (1.5pt) ;
\filldraw[black] (2,-1) circle (1.5pt) ;
\filldraw[black] (3,-1) circle (1.5pt) ;
\filldraw[black] (4,-1) circle (1.5pt) ;
\filldraw[black] (5,-1) circle (1.5pt) ;
\filldraw[black] (6,-1) circle (1.5pt) ;
\filldraw[black] (7,-1) circle (1.5pt) ;
\filldraw[black] (8,-1) circle (1.5pt) ;
\filldraw[black] (9,-1) circle (1.5pt) ;
\filldraw[black] (10,-1) circle (1.5pt) ;

\filldraw[black] (0,-2) circle (1.5pt) ;
\filldraw[black] (1,-2) circle (1.5pt) ;
\filldraw[black] (2,-2) circle (1.5pt) ;
\filldraw[black] (3,-2) circle (1.5pt) ;
\filldraw[black] (4,-2) circle (1.5pt) ;
\filldraw[black] (5,-2) circle (1.5pt) ;
\filldraw[black] (6,-2) circle (1.5pt) ;
\filldraw[black] (7,-2) circle (1.5pt) ;
\filldraw[black] (8,-2) circle (1.5pt) ;
\filldraw[black] (9,-2) circle (1.5pt) ;
\filldraw[black] (10,-2) circle (1.5pt) ;

\filldraw[black] (0,-3) circle (1.5pt) ;
\filldraw[black] (1,-3) circle (1.5pt) ;
\filldraw[black] (2,-3) circle (1.5pt) ;
\filldraw[black] (3,-3) circle (1.5pt) ;
\filldraw[black] (4,-3) circle (1.5pt) ;
\filldraw[black] (5,-3) circle (1.5pt) ;
\filldraw[black] (6,-3) circle (1.5pt) ;
\filldraw[black] (7,-3) circle (1.5pt) ;
\filldraw[black] (8,-3) circle (1.5pt) ;
\filldraw[black] (9,-3) circle (1.5pt) ;
\filldraw[black] (10,-3) circle (1.5pt) ;

\filldraw[black] (0,-4) circle (1.5pt) ;
\filldraw[black] (1,-4) circle (1.5pt) ;
\filldraw[black] (2,-4) circle (1.5pt) ;
\filldraw[black] (3,-4) circle (1.5pt) ;
\filldraw[black] (4,-4) circle (1.5pt) ;
\filldraw[black] (5,-4) circle (1.5pt) ;
\filldraw[black] (6,-4) circle (1.5pt) ;
\filldraw[black] (7,-4) circle (1.5pt) ;
\filldraw[black] (8,-4) circle (1.5pt) ;
\filldraw[black] (9,-4) circle (1.5pt) ;
\filldraw[black] (10,-4) circle (1.5pt) ;

\end{tikzpicture}

}
\bigskip\bigskip

\noindent The lines come from  \eqref{E:pkAllowableMultiIndices}.  The lattice points on the first five lines represent $L^p$-integrable monomials for all $p$ up to {\em but not including} the $p$ value of the next line, while the lattice points {\em on and above} the $p = 6$ line correspond to bounded monomials on $\h_2$.  

Choose $\beta \in \cs\left(\h_{2},L^{\frac 32}\right)$ and $\delta \in \cs\left(\h_{2},L^{2}\right)$ with $\beta \ne \delta$. The first observation is that the $L^2$ pairing 
\begin{equation}\label{E:littlePairing}
\left\langle e_\beta, e_\delta\right\rangle_{\h_2}
\end{equation}
is defined. Note that $\frac 32$ and $2$ are not conjugate. If $\beta$ also belonged to $\cs\left(\h_{2},L^{2}\right)$, H\" older's inequality would imply \eqref{E:littlePairing} is finite. Thus assume $\beta$ lies on the line $L^{\frac 32}$ in the diagram. Proposition \ref{P:ApClasses} says $e_\beta\in L^t(\h_2)$ for all $t<2$ and that $e_\delta\in L^s(\h_2)$ for all $s<3$. There are infinitely many pairs of conjugate exponents in these two intervals, so once again H\" older's inequality shows 
\eqref{E:littlePairing} is defined.
The second observation is that \eqref{E:littlePairing} equals 0. This follows since $\beta \ne \delta$ and the monomials $\{e_\alpha\}$ are orthogonal on $\h_2$.

The same conclusion holds for any multi-indices $\beta \ne \delta$ chosen with $\beta \in \cs(\h_{2},L^{\frac 65})$ (respectively $\beta \in \cs(\h_{2},L^{1})$) and $\delta \in \cs(\h_{2},L^{3})$ (respectively $\delta \in \cs(\h_{2},L^{6})$).  The following corollary of Proposition \ref{P:ApClasses} gives the general version:

\begin{corollary}\label{C:ExtendedOrthogonalityPairing}
Let $\gamma = \frac mn$, $k\in\{ 1-2m-2n, 2-2m-2n,\dots, -1, 0\}$, and define $j(k) := 1-k-2m-2n$. Set
\begin{equation*}
p_k = \frac{2m+2n}{1-k}, \qquad p_{j(k)} = \frac{2m+2n}{1-j(k)} = \frac{2m+2n}{2m+2n+k}.
\end{equation*}
Then for any choice of multi-indices $\beta \in \cs(\h_{\gamma},L^{\,p_k})$ and $\delta \in \cs(\h_{\gamma},L^{\,p_{j(k)}})$ with $\beta \ne \delta$, the inner product
\begin{equation*}
\left\langle e_\beta, e_\delta\right\rangle_{\h_{\gamma}} = 0.
\end{equation*}
\end{corollary}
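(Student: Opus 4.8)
The plan is to argue in two independent steps: first that the integral defining $\langle e_\beta, e_\delta\rangle_{\h_{\gamma}}$ is absolutely convergent, and then that its value is zero. The substantive difficulty lies in the first step. Since $p_k$ and $p_{j(k)}$ are \emph{not} conjugate exponents, one cannot apply H\"older's inequality directly to the exponents attached to $\beta$ and $\delta$; the remedy is to locate an auxiliary conjugate pair lying strictly interior to the relevant allowability ranges, exactly as in the worked example of $\frac32$ and $2$ on $\h_2$ above.

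For the convergence, I would first record the conjugacy relations among the thresholds. A direct computation from $p_k = \frac{2m+2n}{1-k}$ shows that the exponent conjugate to $p_k$ is precisely $p_{j(k)+1}$, and that the exponent conjugate to $p_{k+1}$ is precisely $p_{j(k)}$. Next, invoke Proposition~\ref{P:ApClasses}: because $\beta\in\cs(\h_{\gamma},L^{p_k})$ and the allowable set is constant on $[p_k,p_{k+1})$, one has $e_\beta\in L^s(\h_{\gamma})$ for every $s\in[p_k,p_{k+1})$; similarly $e_\delta\in L^{s'}(\h_{\gamma})$ for every $s'\in[p_{j(k)},p_{j(k)+1})$. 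Now choose any $s'$ lying \emph{strictly} inside $(p_{j(k)},p_{j(k)+1})$ and let $s=\frac{s'}{s'-1}$ be its conjugate; the conjugacy relations above place $s$ strictly inside $(p_k,p_{k+1})$. With this choice $\frac1s+\frac1{s'}=1$, $e_\beta\in L^s$, and $e_\delta\in L^{s'}$, so H\"older gives $\int_{\h_{\gamma}}\abs{z^\beta}\,\abs{z^\delta}\,dV\le \norm{e_\beta}_s\,\norm{e_\delta}_{s'}<\infty$. It is crucial that $s'$ be taken strictly interior: $e_\delta$ need not belong to $L^{p_{j(k)+1}}$ when $\delta$ sits on its threshold line, so the borderline pair $(p_k,p_{j(k)+1})$ would fail. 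This is the precise reason the naive conjugate of $p_k$ does not suffice, and it is the main obstacle to the whole argument.

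Granting absolute convergence, the vanishing follows from the Reinhardt invariance of $\h_{\gamma}$. For any $\theta=(\theta_1,\theta_2)\in\R^2$ the rotation $z_j\mapsto e^{i\theta_j}z_j$ maps $\h_{\gamma}$ onto itself and preserves Lebesgue measure, while multiplying the integrand $z^\beta\overline{z^\delta}$ by the unimodular constant $e^{i(\beta-\delta)\cdot\theta}$. Since the integral is now known to be finite, the change of variables is legitimate and yields $\langle e_\beta,e_\delta\rangle_{\h_{\gamma}}=e^{i(\beta-\delta)\cdot\theta}\langle e_\beta,e_\delta\rangle_{\h_{\gamma}}$ for every $\theta$. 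As $\beta\ne\delta$, one may select $\theta$ with $(\beta-\delta)\cdot\theta\notin 2\pi\Z$, which forces $\langle e_\beta,e_\delta\rangle_{\h_{\gamma}}=0$.
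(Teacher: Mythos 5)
Your proposal is correct and follows essentially the same route as the paper: the text justifies the corollary exactly by combining the stability of $\cs\left(\h_{\gamma},L^p\right)$ on the intervals $[\,p_k,p_{k+1})$ from Proposition \ref{P:ApClasses} with H\"older's inequality applied to an auxiliary conjugate pair chosen strictly inside those intervals, and then invokes orthogonality of the monomials $\{e_\alpha\}$ on the Reinhardt domain. Your only additions --- the explicit computation that the conjugate of $p_k$ is $p_{j(k)+1}$ (and of $p_{k+1}$ is $p_{j(k)}$), and the rotation-invariance proof of the orthogonality the paper simply cites --- are accurate refinements of the same argument, not a different approach.
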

\begin{remark} Corollary \ref{C:ExtendedOrthogonalityPairing} is nontrivial only because $p_k$ and $p_{j(k)}$ are not conjugate; indeed, $\frac{1}{p_k} + \frac{1}{p_{j(k)}} > 1$.  No analogue of Corollary \ref{C:ExtendedOrthogonalityPairing} exists for $\h_\gamma$, $\gamma\notin\Q$.
\end{remark}


\subsection{Constructing $A^p$ functions}\label{S:CreatingAp}

Construction of the sub-Bergman kernels and projection operators is based on the decomposition of monomials in Proposition \ref{P:ApClasses}.  

\subsubsection{Type-$A$ operators on $\h_{m/n}$}\label{SS:TypeA}

A lemma from \cite{EdhMcN16b} is recalled that relates estimates on a class of kernels defined on $\h_{m/n} \times \h_{m/n}$ to mapping properties of the associated integral operators.  If $\Omega \subset \C^n$ is a domain and $K$ is an a.e. positive, measurable function on $\Omega \times \Omega$, let $\ck$ denote the integral operator associated to $K$:
\begin{equation*}\label{E:op_kernel}
\ck f (z) = \int_{\Omega} K(z,w) f(w) \,dV(w).
\end{equation*}

\begin{definition} \label{D:typeA} For $A\in\R^+$, call $\ck$ an operator of type-$A$ on $\h_{m/n}$ if its kernel satisfies
\begin{equation*}\label{E:typeA}
 K\left(z_1,z_2, w_1,w_2\right) \lesssim \frac{\left| z_2w_2\right|^A}{\left|1-z_2\bar w_2\right|^2\, \left| z_2^n\bar w_2^n-z_1^m\bar w_1^m\right|^2},
\end{equation*}
for a constant independent of $(z,w)\in \h_{m/n}\times\h_{m/n}$.
\end{definition}

The basic $L^p$ mapping result is 

\begin{proposition}[\cite{EdhMcN16b}]\label{P:typeA_mapping}
If $\ck$ is an operator of type-$A$ on $\h_{m/n}$, then $\ck:L^p\left(\h_{m/n}\right)\longrightarrow L^p\left(\h_{m/n}\right)$ boundedly if
\begin{equation}\label{E:typeA_range}
\frac{2n+2m}{Am+2n+2m-2nm} < p <\frac{2n+2m}{2nm -Am},
\end{equation}
whenever \begin{equation}\label{E:boundsA}
n(2-m^{-1})-1 < A < 2n.
\end{equation}
\end{proposition}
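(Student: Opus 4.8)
The plan is to use that a type-$A$ kernel is nonnegative, so boundedness of $\ck$ on $L^p(\h_{m/n})$ can be attacked by Schur's test: it suffices to produce a positive weight $\phi$ and constants $C_1,C_2$ with $\int_{\h_{m/n}} K(z,w)\phi(w)^{q}\,dV(w)\le C_1\phi(z)^{q}$ and $\int_{\h_{m/n}} K(z,w)\phi(z)^{p}\,dV(z)\le C_2\phi(w)^{p}$ for a.e. point, where $q=p/(p-1)$; this yields $\norm{\ck}_{L^p\to L^p}\le C_1^{1/q}C_2^{1/p}$. The difficulty is guessing $\phi$, and the cleanest way to expose the correct weight is to straighten $\h_{m/n}$ first.

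I would introduce the branched change of variables $(\xi,\eta)=(z_1^m/z_2^n,\,z_2)$, which carries $\h_{m/n}$ onto an $m$-fold cover of the bidisc $\{\abs{\xi}<1\}\times\{\abs{\eta}<1\}$ and, crucially, factorizes the denominator: since $z_2^n\bar w_2^n-z_1^m\bar w_1^m=\eta^n\bar\eta_w^{\,n}(1-\xi\bar\xi_w)$ and $1-z_2\bar w_2=1-\eta\bar\eta_w$, the dominating kernel of Definition \ref{D:typeA} becomes, up to a constant,
\[
\frac{\abs{\eta\eta_w}^{\,A-2n}}{\abs{1-\eta\bar\eta_w}^{2}}\cdot\frac{1}{\abs{1-\xi\bar\xi_w}^{2}}.
\]
Thus the operator splits as a tensor product of a weighted one-disc kernel in $\eta$ (the disc Bergman kernel damped by the weight $\abs{\eta}^{A-2n}$) and the ordinary disc Bergman kernel in $\xi$. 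On this product, power weights $\phi=\abs{\xi}^{a}\abs{\eta}^{b}$ are adapted to the geometry, and Schur's test (equivalently, the classical $L^p$ mapping theory for weighted Bergman projections on the disc) applies to each factor separately.

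The remaining work is to record the Jacobian of $(z_1,z_2)\mapsto(\xi,\eta)$, which replaces $L^p(\h_{m/n})$ by a weighted $L^p$ on the bidisc with weight built from powers of $\abs{\xi}$ and $\abs{\eta}$, and then to evaluate the two resulting one-dimensional Schur integrals, which are elementary Beta-type radial integrals of the form $\int_0^1 r^{c}(1-\rho^2 r^2)^{-1}\,dr$ and their $\eta$-analogues. Their convergence dictates admissible ranges for the exponents $a,b$; intersecting those ranges should reproduce exactly the interval \eqref{E:typeA_range}, and the hypothesis \eqref{E:boundsA} is what makes the windows nonempty. Heuristically the upper bound $A<2n$ keeps $\abs{\eta}^{A-2n}$ from overpowering the $\eta$-Bergman kernel, while the lower bound $n(2-m^{-1})-1<A$ keeps the combined weight exponents in the admissible Bergman range.

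I expect the main obstacle to be the bookkeeping of the measure transformation together with satisfying both Schur inequalities simultaneously across the \emph{full} open interval rather than a strict subinterval. Concretely, if one bypasses the substitution and works directly on $\h_{m/n}$ with a naive weight $\abs{z_2}^{s}$, the fiber integral over $\{\abs{w_1}^m<\abs{w_2}^n\}$ develops a logarithmic singularity as $z$ approaches the inner boundary $\abs{z_1}^m=\abs{z_2}^n$, so no pure power of $\abs{z_2}$ can dominate. The role of the coordinate change is precisely to spread this inner boundary onto $\{\abs{\xi}=1\}$, where a power weight in $\xi$ is natural; verifying that the transformed power weight remains a legitimate Schur weight on all of \eqref{E:typeA_range} is the delicate point.
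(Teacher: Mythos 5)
Your proposal is not being compared against a proof in this paper at all: Proposition \ref{P:typeA_mapping} is imported from \cite{EdhMcN16b}, where it is established by estimating the kernel and running a variant of Schur's test \emph{directly on} $\h_{m/n}$, necessarily with an auxiliary function that sees the inner boundary $|z_1|^m=|z_2|^n$ (for exactly the logarithmic reason you identify at the end of your sketch). Your route is genuinely different: you transfer through the $m$-to-one branched map $(z_1,z_2)\mapsto(\xi,\eta)=(z_1^m/z_2^n,z_2)$, under which the dominating kernel of Definition \ref{D:typeA} factorizes and the problem becomes a tensor product of two one-dimensional positive operators against the measure $|\xi|^{2/m-2}|\eta|^{2n/m}\,dV(\xi)\,dV(\eta)$; this is in the spirit of the treatment of $\h_1$ in \cite{ChaZey16}. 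The transfer is legitimate because the type-$A$ bound depends only on $z_1^m,z_2,w_1^m,w_2$ and hence descends to the bidisc, and the sheet-counting for the $m$-to-one map costs at most a factor of $m$ in the operator norm. What your approach buys is real: the $\xi$-factor (kernel $|1-\xi\bar\xi_w|^{-2}$ against $|\xi|^{2/m-2}dV$) turns out to impose \emph{no} restriction on $p$, so the entire interval \eqref{E:typeA_range} is exposed as coming from the $\eta$-factor alone. I checked the endpoint arithmetic: a Schur weight $|w|^{-s}(1-|w|^2)^{-t}$ on the $\eta$-factor requires $sq,\,sp\in[\,2n-A,\ A-2n+\tfrac{2n}{m}+2)$ and $tq,\,tp\in(0,1)$; nonemptiness of the $s$-window is exactly the hypothesis \eqref{E:boundsA}, and compatibility of the $p$- and $q$-constraints on $s$ is exactly \eqref{E:typeA_range}. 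So your plan reproduces the stated range precisely, not merely a subinterval.

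One step in your sketch would fail as written: pure power weights $\phi=|\xi|^a|\eta|^b$ cannot serve as Schur weights on the disc factors. The angular average of the factor kernel is $\int_0^{2\pi}|1-\eta re^{i\theta}|^{-2}\,d\theta = 2\pi(1-|\eta|^2r^2)^{-1}$, so the radial integrals you propose, $\int_0^1 r^{c}(1-\rho^2r^2)^{-1}\,dr$, diverge like $\log\frac{1}{1-\rho^2}$ as $\rho\to 1$, and no pure power of $|\eta|$ on the right-hand side of a Schur inequality can absorb that logarithm. In other words, the logarithmic obstruction you attribute to the naive approach on $\h_{m/n}$ does not disappear after the substitution; it reappears at the \emph{outer} boundaries $|\xi|=1$ and $|\eta|=1$ of the two discs. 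The fix is standard and consistent with the classical weighted theory you invoke: take Forelli--Rudin type weights, $(1-|\xi_w|^2)^{-t'}$ for the $\xi$-factor and $|\eta_w|^{-s}(1-|\eta_w|^2)^{-t}$ for the $\eta$-factor, for which the needed estimates $\int_{\D}|w|^{\sigma}(1-|w|^2)^{-tq}|1-\eta\bar w|^{-2}\,dV(w)\lesssim(1-|\eta|^2)^{-tq}$ (valid for $\sigma>-2$, $0<tq<1$) are classical. With that single replacement, your argument closes on the full open interval \eqref{E:typeA_range}.
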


\begin{remark}
The range of $L^p$ boundedness as $A$ tends to the upper and lower bounds in \eqref{E:boundsA} is significant.  As $A\to 2n$, the interval in \eqref{E:typeA_range} increases to $(1,\infty)$; thus an operator of type-$2n$ on $\h_{m/n}$ is $L^p$ bounded for all $1<p<\infty$.  In the other direction, note the left endpoint $n(2-m^{-1})-1 \ge 0$ for all choices of $m,n \in \Z^+$.  As $A$ decreases to this endpoint, the interval in \eqref{E:typeA_range} collapses towards the point $\{2\}$. However an operator of type $n(2-m^{-1})-1$ is not necessarily bounded on any $L^p$ space, including $L^2$.
\end{remark}

\subsubsection{Splitting monomials by integrability class}

Abbreviate the $L^p$-allowable multi-indices given by Proposition \ref{P:ApClasses}: 
\begin{align*} \label{D:MonomialSetS_k}
\cs(\h_{m/n},L^{p_k}) &= \left\{(\alpha_1,\alpha_2) : \alpha_1 \ge 0, \ \ n\alpha_1+m\alpha_2 \ge k \right\} \notag := S_{k},
\end{align*}
where $p_k = \frac{2m+2n}{1-k}$ and $k\in\{ 1-2m-2n, 2-2m-2n,\dots, -1, 0\}$.  

The $L^p$ sub-Bergman kernels for $p\ge 2$ are defined
\begin{equation}\label{D:DefOfBp}
\widetilde{B^{p}}(z,w) := \sum_{\alpha\in S_k} \frac{e_{\alpha}(z)\ol{e_{\alpha}(w)}}{\left\|e_{\alpha}\right\|_2^{2}},\qquad p\in[\,p_k,p_{k+1}).
\end{equation}
The stabilization in Proposition \ref{P:ApClasses} accounts for the identical definition of $\widetilde{B^{p}}(z,w)$ for all $p\in[\,p_k,p_{k+1})$. Note that only $S_k$ for
$k\in [1-m-n, 0]$ occurs in any of the kernels \eqref{D:DefOfBp}, since $p\ge 2$. 
Proposition \ref{P:ApClasses} also says $S_0 = \cs\left(\h_{m/n},L^{2m+2n}\right) = \cs\left(\h_{m/n},L^{\infty}\right)$. Consequently,  denote the sum
\begin{align}\label{D:DefOfB^{infty}}
\sum_{\alpha\in S_0} \frac{e_{\alpha}(z)\ol{e_{\alpha}(w)}}{\left\|e_{\alpha}\right\|_2^{2}} := \widetilde{B^{\infty}}(z,w)
\end{align}
and call $\widetilde{B^{\infty}}(z,w)$ the $L^{\infty}$ sub-Bergman kernel on $\h_{m/n}$. The sum defining $\widetilde{B^{\infty}}(z,w)$ consists only of $L^{\infty}$ monomials.

As an aid to calculating the sums \eqref{D:DefOfBp} and \eqref{D:DefOfB^{infty}}, define
\begin{equation}\label{E:SkDiagonalMultInd}
s_k = \left\{\alpha : \alpha_1 \ge 0,\, n\alpha_1 + m\alpha_2 = k\right\},
\end{equation}  
and consider the functions
\begin{equation}\label{E:SubkernelComp1}
b^{\,p_k}(z,w) = \sum_{\alpha \in s_k} \frac{e_{\alpha}(z)\ol{e_{\alpha}(w)}}{\left\|e_{\alpha}\right\|_2^{2}}.
\end{equation}
Orthogonality of $\{e_\alpha\}$ yields the decomposition
\begin{equation}\label{D:LpkSubBergmanKernel}
\sum_{j=k}^{-1} b^{\,p_j}(z,w) + \widetilde{B^{\infty}}(z,w) = \widetilde{B^{p_k}}(z,w)
\end{equation}
for negative integers $k\ge1-m-n$.

\subsubsection{Analyzing the sub-Bergman kernels.}\label{SS:AnalyzeSubBergman}
  
The first step is to obtain an upper bound on $b^{\,p_k}$ connected to Definition \ref{D:typeA}.
\begin{proposition}
The following estimate holds for all $z,w \in \h_{m/n}$
\begin{equation}\label{E:subkernelBound}
\left|b^{\,p_k}(z,w)\right| \lesssim \frac{|z_2\bar{w}_2|^{2n+\frac km}}{|z_2^n\bar{w}_2^n-z_1^m\bar{w}_1^m|^2}.
\end{equation}
Recall $k<0$ in \eqref{E:subkernelBound}, $k\in\{ 1-m-n, 2-m-n,\dots, -1\}$.
\end{proposition}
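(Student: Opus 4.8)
The plan is to evaluate the finite-type diagonal sum $b^{\,p_k}$ in closed form and then read off the estimate. First I would parametrize the index set $s_k$ from \eqref{E:SkDiagonalMultInd}. Since $\gcd(m,n)=1$, there is a unique $a\in\{0,1,\dots,m-1\}$ with $na\equiv k \pmod m$; setting $b=(k-na)/m\in\Z$, the lattice points of $s_k$ are exactly $\alpha=(a+m\ell,\, b-n\ell)$ for $\ell\in\{0,1,2,\dots\}$. Next I would specialize the norm formula \eqref{E:(p,m/n)-allowable Norm} to $p=2$ and simplify along the line: using $n\alpha_1+m\alpha_2=k$ one gets $n(\alpha_1+1)+m(\alpha_2+1)=k+m+n$, so that
\[
\frac{1}{\norm{e_\alpha}_{2}^2}=\frac{(\alpha_1+1)(k+m+n)}{m\pi^2}.
\]
Note $k+m+n\ge 1$ on the stated range of $k$, so this is a positive, uniformly controlled constant.

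I would then abbreviate $s=z_1\bar w_1$, $t=z_2\bar w_2$, and $u=s^m t^{-n}$, so that $e_\alpha(z)\ol{e_\alpha(w)}=s^{\alpha_1}t^{\alpha_2}$ and the substitution $\alpha=(a+m\ell,\,b-n\ell)$ turns $b^{\,p_k}$ into $s^a t^b$ times an arithmetic–geometric series in $u$:
\[
b^{\,p_k}(z,w)=\frac{k+m+n}{m\pi^2}\, s^a t^b \sum_{\ell=0}^{\infty}(a+1+m\ell)\,u^\ell .
\]
Since $|z_1|^m<|z_2|^n$ on $\h_{m/n}$ forces $|u|<1$, the series converges and sums to $\big[(a+1)+(m-a-1)u\big]/(1-u)^2$. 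I would then rewrite $1-u=(z_2^n\bar w_2^n-z_1^m\bar w_1^m)/t^n$, which produces a factor $t^{2n}$ in the numerator and the desired denominator $|z_2^n\bar w_2^n-z_1^m\bar w_1^m|^2$.

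It remains to estimate. Taking moduli, the bracket is bounded by $(a+1)+(m-a-1)=m$ because $|u|<1$ and $0\le a\le m-1$, absorbing this into the implied constant (uniform since $k$ ranges over a finite set). The key exponent bookkeeping is to convert $|s|^a|t|^b$ into $|t|^{k/m}$: writing $b=k/m-na/m$ gives $|s|^a|t|^b=|t|^{k/m}\big(|s|\,|t|^{-n/m}\big)^a$, and the residual factor factors as $\big(|z_1|/|z_2|^{n/m}\big)^a\big(|w_1|/|w_2|^{n/m}\big)^a\le 1$ precisely by the defining inequality of $\h_{m/n}$ (and $a\ge 0$). Combined with the $t^{2n}$ already present, this yields $|s|^a|t|^{b+2n}\le |t|^{2n+k/m}=|z_2\bar w_2|^{2n+k/m}$, which is exactly \eqref{E:subkernelBound}. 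I expect the only delicate point to be this last step—tracking the exponents through the parametrization and recognizing that the domain inequality is what collapses the $|s|^a|t|^b$ prefactor down to the clean power $|t|^{k/m}$; the series summation and kernel rewriting are routine.
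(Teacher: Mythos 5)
Your proposal is correct and follows essentially the same route as the paper's proof: the same parametrization of the line $s_k$ via $\gcd(m,n)=1$, the same specialization of \eqref{E:(p,m/n)-allowable Norm} at $p=2$, the same closed-form summation of the arithmetic--geometric series (your $u=s^mt^{-n}$ is just the $m$-th power of the paper's auxiliary variable $u=st^{-n/m}$), and the same use of the domain inequality $|s|^m<|t|^n$ to collapse the prefactor to $|t|^{k/m}$. Your final exponent bookkeeping is in fact slightly more explicit than the paper's, which simply states that the bound follows from $|s|^m<|t|^n$.
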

\begin{proof}
Since $\gcd (m,n) = 1$, there is a unique pair $(\beta_1,\beta_2)$ with $0 \le \beta_1 \le m-1$ and $n\beta_1 + m\beta_2 = k$.  Notice that the subsequent lattice points on this line are of the form $(\beta_1+jm, \beta_2-jn)$. Equation \eqref{E:(p,m/n)-allowable Norm} says for all $\alpha \in \cs\left(\h_{m/n},L^2\right)$,
\begin{align}\label{E:L^2coefficient}
\left\|e_{\alpha}\right\|_2^{2}  &= \frac{m\pi^2}{(\alpha_1 + 1)(n\alpha_1+m\alpha_{2}+m+n)}.
\end{align}

In what follows, let $s:=z_1\bar{w}_1$ and $t:=z_2\bar{w_2}$. Definition \eqref{E:SubkernelComp1} and \eqref{E:L^2coefficient} imply
\begin{align}
b^{\,p_k}(z,w) &= \frac{m+n+k}{m\pi^2} \sum_{j=0}^{\infty} \left(\beta_1+jm+1 \right)s^{\beta_1+jm}t^{\beta_2 - jn} \notag \\
&= \frac{m+n+k}{m\pi^2} \cdot t^{k/m} \sum_{j=0}^{\infty} \left(\beta_1+jm+1 \right)s^{\beta_1+jm}(t^{-n/m})^{\beta_1 + jm} \notag \\
&= \frac{m+n+k}{m\pi^2} \cdot t^{k/m} \sum_{j=0}^{\infty} \left(\beta_1+jm+1 \right)u^{\beta_1+jm} \label{E:SubkernelComp2}
\end{align}
where $u := st^{-n/m}$.  Writing this series in closed form yields 
\begin{align*}
\eqref{E:SubkernelComp2} &= \frac{m+n+k}{m\pi^2} \cdot t^{k/m} u^{\beta_1} \cdot \frac{(\beta_1+1)+(m-\beta_1-1)u^m}{(1-u^m)^2} \\
&= \frac{m+n+k}{m\pi^2} \cdot s^{\beta_1}t^{\beta_2} \cdot \frac{(\beta_1+1)t^{2n}+(m-\beta_1-1)s^mt^n}{(t^n-s^m)^2}. 
\end{align*}
Noting that $|s|^m < |t|^n$, the bound \eqref{E:subkernelBound} follows.
\end{proof}

Let $\bm{b}^{\,p_k}$ be the integral operator
\begin{equation}
\bm{b}^{\,p_k}(f)(z) := \int_{\h_{m/n}}b^{\,p_k}(z,w)f(w)\,dV(w)
\end{equation}
The operator $\bm{b}^{\,p_k}$ is orthogonal projection from $L^2(\h_{m/n}) \to \ol{\rm{span}}_{L^2}\left\{e_{\alpha}: \alpha\in s_k\right\}$.  Note each $s_k$ is a set of points in the lattice point diagram lying on a single line.

\begin{corollary}
Let $p_k =  \frac{2m+2n}{1-k} $ for each integer $1-m-n \le k \le -1$ and $q_k$ be conjugate to $p_k$.  The projection $\bm{b}^{\,p_k}$ is an operator of type-{A} for $A = 2n +\frac{k}{m}$.  Thus, $\bm{b}^{\,p_k}$ is $L^p$-bounded for 
\begin{equation}\label{E:bkLpBoundedness}
p \in \left(\frac{2n+2m}{2n+2m+k}, \frac{2n+2m}{-k} \right) = (q_{k+1},p_{k+1}).
\end{equation}
\end{corollary}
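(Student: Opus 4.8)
The plan is to read this off directly from the estimate \eqref{E:subkernelBound} together with Proposition~\ref{P:typeA_mapping}; no new analytic input is required, only the bookkeeping that matches $A=2n+\frac{k}{m}$ to the integrability interval. First I would establish the type-$A$ claim. Definition~\ref{D:typeA} asks for a bound with an \emph{extra} factor $|1-z_2\bar w_2|^{-2}$ in the denominator compared to \eqref{E:subkernelBound}, so I only need to insert this factor harmlessly. Since $|z_2|,|w_2|<1$ on $\h_{m/n}$, one has $|1-z_2\bar w_2|\le 1+|z_2||w_2|\le 2$, hence $|1-z_2\bar w_2|^{-2}\ge \tfrac14$. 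Multiplying the right-hand side of \eqref{E:subkernelBound} by $4\,|1-z_2\bar w_2|^{-2}\ge 1$ therefore only enlarges it, giving
\[
\left|b^{\,p_k}(z,w)\right| \lesssim \frac{|z_2\bar w_2|^{2n+\frac km}}{|1-z_2\bar w_2|^2\,|z_2^n\bar w_2^n - z_1^m\bar w_1^m|^2},
\]
which is exactly the type-$A$ condition with $A=2n+\frac km$.

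Next I would check the admissibility range \eqref{E:boundsA} for this value of $A$, since Proposition~\ref{P:typeA_mapping} only applies when $n(2-m^{-1})-1<A<2n$. The upper bound is immediate: $A=2n+\frac km<2n$ because $k\le -1$. The lower bound $n(2-m^{-1})-1<2n+\frac km$ rearranges to $-n-m<k$, i.e.\ $k\ge 1-m-n$, which is precisely the standing hypothesis on $k$. This is the one place where the restriction $1-m-n\le k\le -1$ is genuinely used, and it is the only step that is not purely formal.

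It then remains to substitute $A=2n+\frac km$ into the interval \eqref{E:typeA_range} and simplify. Writing $Am=2nm+k$, the two denominators collapse: the left denominator becomes $Am+2n+2m-2nm=2n+2m+k$, and the right denominator becomes $2nm-Am=-k$. Thus Proposition~\ref{P:typeA_mapping} yields $L^p$-boundedness of $\bm{b}^{\,p_k}$ for
\[
\frac{2n+2m}{2n+2m+k}<p<\frac{2n+2m}{-k},
\]
as claimed. Finally I would identify the endpoints with the conjugate pair: $p_{k+1}=\frac{2m+2n}{1-(k+1)}=\frac{2m+2n}{-k}$ matches the right endpoint directly, and since $q_{k+1}$ is conjugate to $p_{k+1}$ one computes $\tfrac{1}{q_{k+1}}=1-\tfrac{-k}{2m+2n}=\tfrac{2m+2n+k}{2m+2n}$, so $q_{k+1}=\frac{2m+2n}{2m+2n+k}$ matches the left endpoint. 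The main ``obstacle'' is thus not analytic at all but the verification that the constraint on $k$ secures \eqref{E:boundsA}; the rest is algebraic simplification.
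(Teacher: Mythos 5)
Your proof is correct and takes essentially the same route as the paper, whose entire proof of this corollary is the single line ``Set $A = 2n + \frac{k}{m}$ in Proposition \ref{P:typeA_mapping}.'' The details you supply --- inserting the harmless factor $\left|1-z_2\bar w_2\right|^{-2}\geq \tfrac14$ to match Definition \ref{D:typeA}, observing that the hypothesis $1-m-n\le k\le -1$ is exactly what places $A$ inside the admissible window \eqref{E:boundsA}, and the endpoint algebra identifying the interval with $(q_{k+1},p_{k+1})$ --- are precisely the checks the paper leaves implicit, so nothing is missing and nothing is genuinely different.
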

\begin{proof}
Set $A = 2n +\frac{k}{m}$ in Proposition \ref{P:typeA_mapping}.
\end{proof}


The second step is to show the kernel $\widetilde{B^{\infty}}(z,w)$ satisfies bounds related to Definition \ref{D:typeA} and is more involved.

\begin{proposition}\label{T:BInftyEstimate}
The $L^{\infty}$ sub-Bergman kernel on $\h_{m/n}$ satisfies
\begin{equation}\label{E:BinftyEstimate}
\left| \widetilde{B^{\infty}}(z,w)\right|\lesssim \frac{\left| z_2\bar{w_2}\right|^{2n}}{\left|1-z_2\bar w_2\right|^2\, \left| z_2^n\bar w_2^n-z_1^m\bar w_1^m\right|^2}.
\end{equation}
\end{proposition}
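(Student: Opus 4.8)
The plan is to compute $\widetilde{B^\infty}$ in closed form by carrying out the defining double sum \eqref{D:DefOfB^{infty}} explicitly and then reading off the bound. Writing $s = z_1\bar w_1$ and $t = z_2\bar w_2$ and inserting the norm formula \eqref{E:L^2coefficient}, the kernel becomes
\[
\widetilde{B^\infty}(z,w) = \frac{1}{m\pi^2}\sum_{\substack{\alpha_1\geq 0\\ n\alpha_1+m\alpha_2\geq 0}} (\alpha_1+1)(n\alpha_1+m\alpha_2+m+n)\, s^{\alpha_1} t^{\alpha_2}.
\]
I would evaluate this iterated sum in two stages, first summing over $\alpha_2$ for fixed $\alpha_1$, and then over $\alpha_1$, in order to extract the two denominators $(1-t)^2$ and $(t^n-s^m)^2$ separately.

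First, fix $\alpha_1\geq 0$ and sum over the admissible $\alpha_2\geq -\lfloor n\alpha_1/m\rfloor$. Setting $L_0(\alpha_1) := n\alpha_1 \bmod m$, the minimal admissible value of $n\alpha_1 + m\alpha_2$ equals $L_0(\alpha_1)$, attained at $\alpha_2 = -\lfloor n\alpha_1/m\rfloor$, and the inner sum is a weighted geometric series in $t$. Summing it produces the factor $(1-t)^{-2}$ together with a numerator linear in $t$, namely
\[
\sum_{\alpha_2\geq -\lfloor n\alpha_1/m\rfloor} (n\alpha_1+m\alpha_2+m+n)\,t^{\alpha_2} = \frac{t^{-\lfloor n\alpha_1/m\rfloor}}{(1-t)^2}\Big[(L_0(\alpha_1)+m+n) - (L_0(\alpha_1)+n)\,t\Big].
\]

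Next, I would write $\alpha_1 = qm + r$ with $0\leq r\leq m-1$ and $q\geq 0$. Because $\lfloor n\alpha_1/m\rfloor = nq + \lfloor nr/m\rfloor$ and $L_0(\alpha_1) = L_0(r)$, the $\alpha_1$-dependence factors as $s^{\alpha_1} t^{-\lfloor n\alpha_1/m\rfloor} = v^{q}\, s^r t^{-\lfloor nr/m\rfloor}$, where $v := s^m t^{-n}$. Summing the resulting weighted geometric series in $q$ yields the factor $(1-v)^{-2} = t^{2n}(t^n-s^m)^{-2}$ and leaves a finite sum over $r\in\{0,\dots,m-1\}$, giving the closed form
\[
\widetilde{B^\infty}(z,w) = \frac{1}{m\pi^2}\cdot\frac{t^{2n}}{(1-t)^2\,(t^n - s^m)^2}\,\Sigma(s,t),
\]
where $\Sigma(s,t) = \sum_{r=0}^{m-1} s^r t^{-\lfloor nr/m\rfloor}\big[(L_0(r)+m+n)-(L_0(r)+n)t\big]\big[(r+1)+(m-r-1)v\big]$.

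It then remains only to bound $|\Sigma(s,t)|$ by a constant depending on $m,n$ for all $(z,w)\in\h_{m/n}\times\h_{m/n}$; since $t = z_2\bar w_2$ and $t^n - s^m = z_2^n\bar w_2^n - z_1^m\bar w_1^m$, the estimate \eqref{E:BinftyEstimate} follows immediately. For this I would invoke the two geometric constraints: from $|z_1|^m<|z_2|^n$ and $|w_1|^m<|w_2|^n$ one gets $|s|^m < |t|^n$, hence $|v|<1$ and $|s|^r<|t|^{nr/m}$, while $|z_2|,|w_2|<1$ give $|t|<1$. The first pair yields $|s^r t^{-\lfloor nr/m\rfloor}|\leq |t|^{\{nr/m\}}\leq 1$, and $|t|<1$, $|v|<1$ bound the two bracketed factors by constants; as $\Sigma$ has only $m$ terms, $|\Sigma|$ is bounded. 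The main obstacle I expect is the bookkeeping in the second stage: correctly identifying $L_0(\alpha_1)=n\alpha_1\bmod m$ and $\lfloor n\alpha_1/m\rfloor$ and verifying they split cleanly under $\alpha_1=qm+r$, so that the $q$-summation decouples as a geometric series in $v$. This decomposition is what reduces the doubly infinite sum to the finite $\Sigma$, and it is precisely where the arithmetic ($\gcd(m,n)=1$) and the defining inequality $|s|^m<|t|^n$ combine to tame the potentially singular boundary factor $t^{-\lfloor nr/m\rfloor}$.
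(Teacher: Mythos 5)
Your proof is correct and follows essentially the same route as the paper: both partition $\cs\left(\h_{m/n},L^{\infty}\right)$ into $m$ residue families (your split $\alpha_1 = qm+r$ is exactly the paper's diagonal parametrization relabeled by its permutation $\sigma$, where $n\sigma(r)\equiv r \pmod m$), factor each family's sum into two weighted geometric series producing $(1-t)^{-2}$ and $t^{2n}(t^n-s^m)^{-2}$, and bound the leftover finite sum by a constant using $|t|<1$ and $|s|^m<|t|^n$. The only difference is bookkeeping — you sum iteratively (first over $\alpha_2$, then over $\alpha_1$ by residue class, tracking $\lfloor nr/m\rfloor$ and $nr \bmod m$), whereas the paper sums the double series simultaneously in the variables $u=st^{-n/m}$ and $t$ — and the resulting closed forms and estimates coincide.
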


\begin{proof} Recall the description of $\cs(\h_{m/n},L^{\infty})$ given by \eqref{E:boundedAllowableMultiIndices} and let $r\in \{0,1,\dots,m-1 \}$.  Since $\gcd (m,n)=1$, there is a unique $(\alpha_1,\alpha_2)$  with both $n\alpha_1+m\alpha_2=r$ and $0\le \alpha_1 \le m-1$.  Set this $\alpha_1$ = $\sigma(r)$.  The function $\sigma$ is a permutation of the set $\{0,1,\dots,m-1 \}$ with $\sigma(0)=0$.

Each $\alpha \in \cs(\h_{m/n},L^{\infty}) = \{(\alpha_1,\alpha_2) : \alpha_1\ge0,\ n\alpha_1+m\alpha_2 \ge0 \}$ can uniquely described by a line of the form $n\alpha_1+m\alpha_2 = k$ and an $\alpha_1$ value.  Again letting $r\in \{0,1,\dots,m-1 \}$, parametrize $k$ and $\alpha_1$ by
\begin{align*}
n\alpha_1+m\alpha_2 = md+r, \qquad &d=0,1,\dots \\
\alpha_1 = mj+\sigma(r), \qquad &j=0,1,\dots
\end{align*}

For ease of notation set $s = z_1\bar{w}_1, t = z_2\bar{w}_2$.  From equations \eqref{D:DefOfBp} and \eqref{E:L^2coefficient},
\begin{align}
\wt{B^{\infty}}(z,w) &= \frac{1}{m \pi^2}\sum_{\alpha\in\cs({\h_{m/n},L^{\infty}})} (\alpha_1+1)(n\alpha_1+m\alpha_2+m+n)s^{\alpha_1}t^{\alpha_2} \notag \\
&= \frac{1}{m \pi^2} \sum_{r=0}^{m-1} \sum_{d,j=0}^{\infty} (mj+\sigma(r)+1)(md+r+m+n) s^{mj+\sigma(r)}t^{d+\frac{r}{m}-nj-\frac{n}{m}\sigma(r)} \notag \\
&= \frac{1}{m\pi^2}\sum_{r=0}^{m-1} u^{\sigma(r)} t^{\frac rm} \Bigg( \sum_{j=0}^{\infty}(mj+ \sigma(r)+1)u^{mj} \Bigg) \Bigg( \sum_{d=0}^{\infty}(md+r+m+n)t^d \Bigg)  \notag \\
&:= \frac{1}{m\pi^2}\sum_{r=0}^{m-1} u^{\sigma(r)} t^{\frac rm}  I_r(u) J_r(t) \label{D:I_r(u)ANDJ_r(t)},
\end{align}
where we have introduced the new variable $u = st^{-n/m}$.  Note both $|t|<1$ and $|u|<1$ on $\h_{m/n}$.  For fixed $r$, estimate the sums $I_r(u)$ and $J_r(t)$ given in \eqref{D:I_r(u)ANDJ_r(t)}:
\begin{align}
|I_r(u)| = \left| \sum_{j=0}^{\infty} (mj+1)u^{mj} + \sigma(r) \sum_{j=0}^{\infty} u^{mj} \right| \lesssim \frac{1}{|1-u^m|^2} = \frac{|t|^{2n}}{|t^n-s^m|^2}, \label{E:I_r(u)Bound}
\end{align}
and
\begin{align}
|J_r(t)| = \left| m\sum_{d=0}^{\infty} (d+1)t^d + (r+n)\sum_{d=0}^{\infty} t^d \right| \lesssim \frac{1}{|1-t|^2}. \label{E:J_r(t)Bound}
\end{align}
Note both bounds hold for all $r\in\{0,1,\dots,m-1 \}$.  Combining \eqref{E:I_r(u)Bound} and \eqref{E:J_r(t)Bound} with \eqref{D:I_r(u)ANDJ_r(t)} gives the result.

\end{proof}

\subsection{Proof of Theorem \ref{T:SubBergmanLpMapping}}\label{SS:ProofSubBergmanLp}

For $p\in [2,\infty)$, the $L^{\,p}$ sub-Bergman projection  is
\begin{equation*}
\widetilde{\bm{B}^{p}}f(z) := \int_{\h_{m/n}}\widetilde{B^{p}}(z,w)f(w)\,dV(w),
\end{equation*}
with kernel given by  \eqref{D:DefOfBp}. 
Notice the identical kernels in definition \eqref{D:DefOfBp} imply $\widetilde{\bm{B}^{p}}=\widetilde{\bm{B}^{p'}}$ for all $p, p'\in [p_k, p_{k+1})$.
 Similarly, $\widetilde{\bm{B}^{\infty}}$ denotes the $L^{\infty}$ sub-Bergman projection on $\h_{m/n}$,  the operator whose kernel is defined by \eqref{D:DefOfB^{infty}}.

\begin{proposition}\label{T:TechnicalSubBergmanStatement}
Let $p_k = \frac{2m+2n}{1-k}, $ for $k\in\{ 1-m-n, 2-m-n,\dots, -1\}$, and let $q_k$ denote the conjugate exponent of $p_k$.  Interpret $p_1=\infty$ and $q_1=1$.  

Let $p\in [p_k,p_{k+1})$.  The following hold:

\begin{itemize}
\item[(i)] $\left|\widetilde{\bm{B}^{p}}\right|$ is  $L^{p'}$ bounded for all $p'\in (q_{k+1},p_{k+1})$. 
\medskip
\item[(ii)]  $\left|\widetilde{\bm{B}^{\infty}}\right|$ is a bounded operator on $L^p$ for all $p\in (1,\infty)$.
\end{itemize}
\end{proposition}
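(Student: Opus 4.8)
The plan is to establish part (ii) first, since it is needed as an ingredient for part (i). For (ii), I would observe that Proposition~\ref{T:BInftyEstimate} shows the kernel $\widetilde{B^{\infty}}$ obeys exactly the bound of Definition~\ref{D:typeA} with $A=2n$: indeed \eqref{E:BinftyEstimate} is precisely the type-$A$ estimate for that value of $A$. Consequently $\left|\widetilde{\bm{B}^{\infty}}\right|$, whose kernel is the a.e.\ positive function $\left|\widetilde{B^{\infty}}(z,w)\right|$, is an operator of type-$2n$ on $\h_{m/n}$. The remark following Proposition~\ref{P:typeA_mapping} records that, as $A\to 2n$, the boundedness window \eqref{E:typeA_range} opens up to all of $(1,\infty)$, so a type-$2n$ operator is $L^{p}$-bounded for every $1<p<\infty$. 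This is exactly assertion (ii).

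For (i), I would first note that the kernels in \eqref{D:DefOfBp} are constant in $p$ across each block $[\,p_k,p_{k+1})$, so $\widetilde{\bm{B}^{p}}=\widetilde{\bm{B}^{p_k}}$ there and it suffices to treat $\widetilde{\bm{B}^{p_k}}$. Applying the triangle inequality to the decomposition \eqref{D:LpkSubBergmanKernel} gives the pointwise kernel bound
\[
\left|\widetilde{B^{p_k}}(z,w)\right| \le \sum_{j=k}^{-1}\left|b^{\,p_j}(z,w)\right| + \left|\widetilde{B^{\infty}}(z,w)\right|,
\]
so that $\left|\widetilde{\bm{B}^{p_k}}\right|$, acting on nonnegative functions, is dominated by $\sum_{j=k}^{-1}\left|\bm{b}^{\,p_j}\right| + \left|\widetilde{\bm{B}^{\infty}}\right|$. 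It then suffices to bound each of these finitely many summands on $L^{p'}$ for $p'\in(q_{k+1},p_{k+1})$. Since \eqref{E:subkernelBound} bounds $\left|b^{\,p_j}\right|$ by a type-$A$ kernel with $A=2n+\tfrac{j}{m}$, the operator $\left|\bm{b}^{\,p_j}\right|$ is itself of type-$A$, hence $L^{p'}$-bounded on the interval $(q_{j+1},p_{j+1})$ recorded in \eqref{E:bkLpBoundedness}.

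The crucial bookkeeping step is to check that a single exponent $p'$ lies in every one of these windows simultaneously. Because $p_\ell=\tfrac{2m+2n}{1-\ell}$ is increasing in $\ell$ while the conjugate $q_\ell$ is correspondingly decreasing, the intervals $(q_{j+1},p_{j+1})$ are nested and grow with $j$; hence $(q_{k+1},p_{k+1})\subseteq(q_{j+1},p_{j+1})$ for every $j$ with $k\le j\le -1$. Thus each $\left|\bm{b}^{\,p_j}\right|$ in the sum is $L^{p'}$-bounded on $(q_{k+1},p_{k+1})$, while $\left|\widetilde{\bm{B}^{\infty}}\right|$ is $L^{p'}$-bounded there by part (ii). A finite sum of $L^{p'}$-bounded operators is $L^{p'}$-bounded, which yields (i).

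I expect the only genuine obstacle to be this interval bookkeeping in the last step, namely confirming that one $p'$ lands in all the boundedness windows of the finitely many pieces at once; this is accompanied by the routine check that the type-$A$ hypotheses \eqref{E:boundsA} hold for each $A=2n+\tfrac{j}{m}$ with $1-m-n\le j\le -1$ (the upper bound uses $j<0$, the lower bound uses $j\ge 1-m-n$). Everything else reduces to direct appeals to \eqref{E:subkernelBound}, \eqref{E:BinftyEstimate}, and Proposition~\ref{P:typeA_mapping} with its remark.
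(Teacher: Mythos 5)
Your proposal is correct and takes essentially the same approach as the paper: the triangle inequality applied to the decomposition \eqref{D:LpkSubBergmanKernel}, the kernel estimates \eqref{E:subkernelBound} and \eqref{E:BinftyEstimate}, and the type-$A$ mapping result of Proposition \ref{P:typeA_mapping}. The only cosmetic difference is that the paper absorbs all the pieces into a single type-$A$ bound with the worst exponent $A = 2n + \tfrac{k}{m}$ (using $\left|z_2\bar w_2\right| < 1$, so the term with the smallest exponent dominates) and invokes Proposition \ref{P:typeA_mapping} once, whereas you invoke it for each piece and then intersect the nested intervals $(q_{j+1},p_{j+1})$ --- an equivalent piece of bookkeeping.
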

\begin{proof}
  Estimate \eqref{E:BinftyEstimate} shows that $\left|\widetilde{\bm{B}^{\infty}}\right|$ is a type-$A$ operator with $A=2n$. Proposition \ref{P:typeA_mapping} then implies (ii).  For $1-m-n \le k \le -1$, apply the triangle inequality to equation \eqref{D:LpkSubBergmanKernel} together with estimate \eqref{E:subkernelBound} to see that $\left|\widetilde{\bm{B}^{p_k}}\right|$ is a type-$A$ operator with $A = 2n +\frac{k}{m}$.  Proposition \ref{P:typeA_mapping} then implies (i).\end{proof}

To complete the proof of Theorem \ref{T:SubBergmanLpMapping}, recall that $\widetilde{\bm{B}^p}$ is defined as the orthogonal projection from $L^2(\h_{m/n})$ onto $G^{2,p}(\h_{m/n})$, the target space given by equation \eqref{eq-m2pdef}.  Since $A^p(\h_{m/n}) \subset G^{2,p}(\h_{m/n})$, reproduction property (ii) of Theorem \ref{T:surjectivity} holds. \qed

\begin{remark}\label{R:NonSurjectivitySubBergman}
Again let $p\ge 2$ with $p \in [p_k,p_{k+1})$.  If $p'\in (q_{k+1},p_{k+1})$, then its conjugate $q'\in (q_{k+1},p_{k+1})$.  Proposition \ref{T:TechnicalSubBergmanStatement} shows $\left|\widetilde{\bm{B}^{p}}\right|$ is both $L^{p'}$ and $L^{q'}$ bounded.  In particular, $\left|\widetilde{\bm{B}^{p}}\right|$ is bounded on $L^q(\h_{m/n})$, where $q$ is conjugate to $p$.

On the other hand, reproduction of the space $A^{q'}$ fails for all $q'<2$.  Indeed, a slight modification of the proof of Proposition \ref{P:BonA^p} shows:  if $f \in A^{q'}(\h_{m/n})$, then
\begin{equation*}
\widetilde{\bm{B}^{p}}(f)(z) = \sum_{\alpha \in \cs(\h_{m/n},L^p)} a_{\alpha}(f)e_{\alpha}(z).
\end{equation*}
Lemma \ref{L:(p,m/n)-allowable MultiIndices and Norm} implies $\cs(\h_{m/n},L^{q'})$ is a {\em strict} superset of  $\cs(\h_{m/n},L^{2})$ which in turn contains $\cs(\h_{m/n},L^{p})$.  Thus non-trivial elements in $A^{q'}$ are mapped to 0.  Ramifications of this are seen in the next subsection.
\end{remark}


\subsection{Duality, Approximation and Minimization}\label{S:dual}

The sub-Bergman projections give precise answers to versions of (Q1-3) on the domains $\h_{m/n}$.  

\subsubsection{Duality}  The dual space of $A^p\left(\h_{m/n}\right)$ for all $1<p<\infty$ can be concretely described. The representation is particularly cogent when $p > 2$.

\begin{proposition}\label{T:DualityGenHartogs}
Let $p>2$ with conjugate $q$.  The dual space $A^p\left(\h_{m/n}\right)'$ can be identified with a proper subset of $A^q(\h_{m/n})$.  Namely,
\begin{equation}\label{E:ApDualH}
A^p(\h_{m/n})' \cong   \left\{ f\in A^q(\h_{m/n}) : f = \sum_{\alpha \in  \cs\left(\h_{m/n},L^p\right)} a_{\alpha}(f)e_{\alpha} \right\}.
\end{equation}
Additionally,
\begin{equation}\label{E:AqDualH}
A^q(\h_{m/n})' \cong  A^p(\h_{m/n}) \oplus \overline{\rm{span}}_{A^q(\h_{m/n})'}\left\{a_\alpha: \alpha\in \cs\left(\h_{m/n},L^q\right) \backslash \cs\left(\h_{m/n},L^p\right) \right\}.
\end{equation}
\end{proposition}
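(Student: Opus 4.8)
The plan is to obtain both displays as concrete realizations of the general Reinhardt duality statement, Proposition~\ref{P:ReinhardtDuals}, whose sole hypothesis is that $\left|\widetilde{\bm{B}^p}\right|$ be bounded on $L^p(\cR)$. The first step is to verify this hypothesis for $\cR=\h_{m/n}$: Theorem~\ref{T:SubBergmanLpMapping}(i) asserts exactly that $\left|\widetilde{\bm{B}^p}\right|$ is $L^p$-bounded on $\h_{m/n}$ for every $p\ge 2$. Since $\h_{m/n}$ is a bounded Reinhardt domain, Proposition~\ref{P:ReinhardtDuals} then applies with $\cR=\h_{m/n}$ and delivers, in abstract form, the isomorphism $A^p(\h_{m/n})'\cong G^{q,p}(\h_{m/n})$ of part (ii) and the direct-sum decomposition \eqref{eq-directsum} of $A^q(\h_{m/n})'$. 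All that remains is to rewrite the abstract spaces $G^{q,p}(\h_{m/n})$ and $\Phi_q\bigl(A^p(\h_{m/n})\bigr)$ in the explicit Laurent-series language of \eqref{E:ApDualH} and \eqref{E:AqDualH}.

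The substantive step is the identification
\begin{equation*}
G^{q,p}(\h_{m/n}) = \Bigl\{ f\in A^q(\h_{m/n}) : f = \sum_{\alpha \in \cs(\h_{m/n},L^p)} a_{\alpha}(f)\, e_{\alpha} \Bigr\}.
\end{equation*}
For the inclusion $\supseteq$ I would use Theorem~\ref{thm-schauder}: if $f\in A^q(\h_{m/n})$ has Laurent support in $\cs(\h_{m/n},L^p)$, its square partial sums $S_Nf$ are Laurent polynomials lying in $\mathrm{span}\{e_\alpha:\alpha\in\cs(\h_{m/n},L^p)\}$ and converge to $f$ in the $A^q$ norm, so $f\in G^{q,p}(\h_{m/n})$. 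For $\subseteq$, approximate $f\in G^{q,p}(\h_{m/n})$ in $A^q$ by such polynomials; since $q<p$ forces $\cs(\h_{m/n},L^p)\subset\cs(\h_{m/n},L^q)$, the coefficient functionals $a_\beta$ are bounded on $A^q(\h_{m/n})$ (Proposition~\ref{prop-coefficients}) for every relevant $\beta$, and continuity forces $a_\beta(f)=0$ whenever $\beta\in\cs(\h_{m/n},L^q)\setminus\cs(\h_{m/n},L^p)$. Combined with Corollary~\ref{cor-aalpha}, which confines the Laurent support of $f$ to $\cs(\h_{m/n},L^q)$, this shows $f$ is supported on $\cs(\h_{m/n},L^p)$, yielding \eqref{E:ApDualH}.

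For \eqref{E:AqDualH} I would note that, $\Phi_p$ being surjective, Corollary~\ref{C:GenDuality} makes $\Phi_q:A^p(\h_{m/n})\to A^q(\h_{m/n})'$ injective with closed range, hence a conjugate-linear homeomorphism onto its image; identifying $A^p(\h_{m/n})$ with $\Phi_q\bigl(A^p(\h_{m/n})\bigr)$ turns \eqref{eq-directsum} into \eqref{E:AqDualH}. Properness of the inclusion in \eqref{E:ApDualH} follows from the explicit index sets in Proposition~\ref{P:ApClasses}: since $q<2$ and $p=2$ is itself a threshold value, $\cs(\h_{m/n},L^q)\supsetneq\cs(\h_{m/n},L^2)\supseteq\cs(\h_{m/n},L^p)$, so any $\alpha$ in the nonempty difference yields $e_\alpha\in A^q(\h_{m/n})\setminus G^{q,p}(\h_{m/n})$. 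I do not anticipate a genuine obstacle: the argument is bookkeeping that transports already-proved abstract isomorphisms through the Laurent-series description of $A^q(\h_{m/n})$. The one point demanding care is the identification of $G^{q,p}(\h_{m/n})$ above, which implicitly compares the $A^2$ norm defining $G^{q,p}$ with the $A^q$ norm of the target set and rests squarely on norm convergence of Laurent series (Theorem~\ref{thm-schauder}) rather than on any feature special to $\h_{m/n}$.
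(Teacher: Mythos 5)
Your proposal is correct and follows essentially the same route as the paper: verify the hypothesis of Proposition \ref{P:ReinhardtDuals} via Theorem \ref{T:SubBergmanLpMapping}(i), then read \eqref{E:ApDualH} and \eqref{E:AqDualH} off from parts (ii) and (iii) of that proposition by identifying the right-hand side of \eqref{E:ApDualH} with $G^{q,p}(\h_{m/n})$. The extra bookkeeping you supply (the two-sided Laurent-support identification of $G^{q,p}$ via Theorem \ref{thm-schauder} and Proposition \ref{prop-coefficients}, and the properness check) is exactly what the paper leaves implicit; your only slip is the closing remark that $G^{q,p}$ is defined by the $A^2$ norm --- by \eqref{D:Gqp} it is the $A^q$-closure, which is what your argument actually uses.
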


\begin{proof}
Since $\left|\widetilde{\bm{B}^{p}}\right|$ is bounded on $L^p$, Proposition \ref{P:ReinhardtDuals} applies.  Equation \eqref{E:ApDualH} follows from part (ii) of Proposition  \ref{P:ReinhardtDuals}, noting the right hand side of \eqref{E:ApDualH} is $G^{q,p}(\h_{m/n})$.  Equation \eqref{E:AqDualH} follows from part (iii) of the same proposition.
\end{proof}

This result  should be compared with the breakdown shown in Section \ref{SS:FailureRep}.

\subsubsection{Approximation of $A^p$ functions} The form of (Q2) addressed is the following: given $p\in (1,\infty)$ and $r >p$, when can $f\in A^p\left(\h_{m/n}\right)$ be approximated by $A^r\left(\h_{m/n}\right)$ functions in the $L^p$ norm?  As in Proposition \ref{T:DualityGenHartogs}, the answer is most appealing when $p>2$.

\begin{proposition}\label{T:ApproxGenHartogs1} Let $p\geq 2$ be given and $r>p$. Then $f\in A^p(\h_{m/n})$ can be approximated by $A^r\left(\h_{m/n}\right)$ functions in the $L^p$ norm if and only if
$\widetilde{\bm{B}^r}f = f$.
\end{proposition}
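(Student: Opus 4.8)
The plan is to treat this as the $A^r$-analogue of Theorem~\ref{T:easy_approx}, with the sub-Bergman projection $\widetilde{\bm{B}^r}$ taking the role of the operator $\bm{P}$. Everything hinges on one preliminary fact: that $\widetilde{\bm{B}^r}$ is bounded on $L^p(\h_{m/n})$, and not merely on $L^r$. To establish it I would fix the integer $k$ with $r\in[p_k,p_{k+1})$ in the partition of Proposition~\ref{P:ApClasses} (with the convention $p_1=\infty$); since $r>2=p_{1-m-n}$, such a $k\in\{1-m-n,\dots,0\}$ exists and $\widetilde{\bm{B}^r}=\widetilde{\bm{B}^{p_k}}$. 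If $k=0$ (i.e. $r\ge 2m+2n$), then $\widetilde{\bm{B}^r}=\widetilde{\bm{B}^{\infty}}$ is bounded on every $L^{p'}$, $1<p'<\infty$, by Proposition~\ref{T:TechnicalSubBergmanStatement}(ii). Otherwise $1-m-n\le k\le -1$, and Proposition~\ref{T:TechnicalSubBergmanStatement}(i) gives boundedness of $\left|\widetilde{\bm{B}^r}\right|$ on $L^{p'}$ for all $p'\in(q_{k+1},p_{k+1})$; since $p_{k+1}>2$ forces $q_{k+1}<2\le p\le r<p_{k+1}$, the exponent $p$ lies in that interval. Either way $\widetilde{\bm{B}^r}$ is bounded on $L^p(\h_{m/n})$.

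For the implication $\widetilde{\bm{B}^r}f=f\Rightarrow$ approximability, I would choose $\phi_n\in C_c^\infty(\h_{m/n})$ with $\|\phi_n-f\|_p\to 0$ and set $f_n:=\widetilde{\bm{B}^r}\phi_n$. Since $\phi_n\in L^r$ and $\left|\widetilde{\bm{B}^r}\right|$ is $L^r$-bounded (Theorem~\ref{T:SubBergmanLpMapping}(i) with $p=r$), Proposition~\ref{prop-subbergman2}(i) yields $f_n\in A^r(\h_{m/n})$. Then, exactly as in Theorem~\ref{T:easy_approx},
\[
\|f_n-f\|_p=\left\|\widetilde{\bm{B}^r}(\phi_n-f)\right\|_p\lesssim\|\phi_n-f\|_p\to 0,
\]
so $f$ is $L^p$-approximated by the $A^r$ functions $f_n$.

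For the converse, suppose $f_n\in A^r(\h_{m/n})$ with $f_n\to f$ in $L^p$. As $r>2$, each $f_n\in A^r\subset A^2\subset G^{2,r}$, so the reproduction property (Theorem~\ref{T:SubBergmanLpMapping}(ii) applied to $r$) gives $\widetilde{\bm{B}^r}f_n=f_n$. Continuity of $\widetilde{\bm{B}^r}$ on $L^p$ then yields $f=\lim_n f_n=\lim_n\widetilde{\bm{B}^r}f_n=\widetilde{\bm{B}^r}f$, by uniqueness of $L^p$ limits.

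The only genuine obstacle is the boundedness of $\widetilde{\bm{B}^r}$ on the sub-optimal exponent $L^p$ with $p<r$; this is where the interval bookkeeping of Proposition~\ref{T:TechnicalSubBergmanStatement} does the work, placing $p$ safely inside $(q_{k+1},p_{k+1})$. Once that is in hand, both directions collapse to the continuity-plus-reproduction argument already used for Theorem~\ref{T:easy_approx}.
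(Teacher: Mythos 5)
Your proof is correct, and your forward direction (reproduction implies approximability) together with the preliminary $L^p$-boundedness bookkeeping matches the paper's argument; indeed you make explicit the exponent check $p\in(q_{k+1},p_{k+1})$ that the paper leaves implicit in this proposition and only spells out in its $1<p<2$ analogue, Proposition \ref{T:ApproxGenHartogs2}. The converse is where you genuinely diverge. The paper argues contrapositively through the Laurent-coefficient machinery: if $\widetilde{\bm{B}^r}f\neq f$, then by Proposition \ref{prop-subbergman2} there is a coefficient $a_\beta(f)\neq 0$ with $\beta\in\cs\left(\h_{m/n},L^p\right)\setminus\cs\left(\h_{m/n},L^r\right)$; since $a_\beta$ annihilates every $A^r$ function (Corollary \ref{cor-aalpha}) yet is continuous on $A^p$ (Proposition \ref{prop-coefficients}), no sequence in $A^r\left(\h_{m/n}\right)$ can converge to $f$ in $L^p$. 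You instead argue directly: $\widetilde{\bm{B}^r}$ reproduces each approximant $f_n\in A^r\left(\h_{m/n}\right)$ and is continuous on $L^p$, so the identity $\widetilde{\bm{B}^r}f_n=f_n$ passes to the limit and forces $\widetilde{\bm{B}^r}f=f$. Your route is shorter and more abstract: it works for any bounded projection that reproduces the approximating class, and it uses nothing beyond the two properties in Theorem \ref{T:SubBergmanLpMapping} and Proposition \ref{T:TechnicalSubBergmanStatement}. What the paper's route buys is that its converse needs no operator bound at all (the coefficient functionals are bounded with no hypotheses on $\widetilde{\bm{B}^r}$), and it gives a sharper diagnosis, exhibiting the specific Laurent coefficient that obstructs approximation, in the spirit of the breakdown examples of Section \ref{S:failure}.

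One small correction: your inclusion chain $A^r\subset A^2\subset G^{2,r}$ is wrong in its second step. The space $G^{2,r}\left(\h_{m/n}\right)$ is a (generally proper) closed subspace of $A^2\left(\h_{m/n}\right)$, so $A^2\not\subset G^{2,r}$; that properness is precisely what makes the sub-Bergman projection different from the Bergman projection. The inclusion you actually need, $A^r\subset G^{2,r}$, holds directly from definition \eqref{eq-m2pdef}, and it is exactly what Theorem \ref{T:SubBergmanLpMapping}(ii) encodes, so the misstatement is cosmetic and your argument stands.
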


\begin{proof}
Suppose $f\in A^p\left(\h_{m/n}\right)$ and $\widetilde{\bm{B}^r}f = f$. 
Proceed as in the proof of Proposition \ref{T:easy_approx}.
 Since $f \in L^p\left(\h_{m/n}\right)$, there is a sequence $\phi_n\in C_c^{\infty}\left(\h_{m/n}\right)$ satisfying $\norm{\phi_n - f}_p \to 0$ as $n\to \infty$.  Set $f_n := \widetilde{\bm{B}^r}\phi_n$.  Note $f_n\in A^r\left(\h_{m/n}\right)$ by Proposition \ref{T:TechnicalSubBergmanStatement} . Moreover
 \begin{align*}
\norm{f_n-f}_p = \left\|{\widetilde{\bm{B}^r}(\phi_n - f)}\right\|_p \lesssim \norm{\phi_n - f}_p,
\end{align*}
so $f$ is approximable as claimed.

For the converse, suppose $f\in A^p(\h_{m/n})$ and $\widetilde{\bm{B}^r}f \neq f$. 
By Proposition \ref{prop-subbergman2} , there exists $e_\beta\in\cs\left(\h_{m/n}, L^p\right)\setminus \cs\left(\h_{m/n}, L^r\right)$ such that $a_\beta(f)\neq 0$, with $a_\beta(f)$ associated to $f$ via \eqref{E:ApLaurentSeries}.

Suppose there were a sequence $g_n\in A^r(\h_{m/n})$ such that $g_n\to f$ in $A^p\left(\h_{m/n}\right)$. Note that $a_\beta(g_n)=0$ for all $n$. Thus $a_\beta(g_n-f) =-a_\beta(f)\neq 0\,\,\forall n$. But Proposition \ref{prop-coefficients}
implies that $a_\beta$ is continuous on $A^p(\h_{m/n})$. Thus
$$\left|a_\beta \left(g_n-f\right)\right|\lesssim \|g_n-f\|_{A^p}\to 0\qquad\text{as }n\to \infty,$$ 
a contradiction.
\end{proof}

For $1<p<2$, the results are more complicated. In the first place, the sub-Bergman projections $\widetilde{\bm{B}^r}$ are only defined if $r\geq 2$; consequently no approximation theorem for the range $1<p<r<2$ follows from results in this paper. Additionally, the approximation result that does follow -- for the range $1<p< 2\leq r$ -- requires consideration of the partition \eqref{E:partition} in Proposition \ref{P:ApClasses}.

\begin{proposition}\label{T:ApproxGenHartogs2} Let $1<p<2$ and $p'$ be conjugate to $p$. In the partition \eqref{E:partition}, choose $k$ so that $p' < p_{k+1} = \frac{2m+2n}{-k}$.

Fix $r \in [\,p_k,p_{k+1})$. Then $f\in A^p(\h_{m/n})$ can be approximated by $A^r\left(\h_{m/n}\right)$ functions in the $L^p$ norm if and only if
$\widetilde{\bm{B}^r}f = f$.
\end{proposition}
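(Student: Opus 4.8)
The plan is to run the same two-sided template used for Proposition \ref{T:ApproxGenHartogs1}, the essential new ingredient being $L^p$-boundedness of $\widetilde{\bm{B}^r}$ in the delicate regime $1<p<2$. The first step is to place $p$ inside the boundedness window of $\widetilde{\bm{B}^r}$. Since $r\in[\,p_k,p_{k+1})$, the stabilization in Proposition \ref{P:ApClasses}, via definition \eqref{D:DefOfBp}, gives $\widetilde{\bm{B}^r}=\widetilde{\bm{B}^{p_k}}$. The hypothesis $p'<p_{k+1}$ is equivalent to $p>q_{k+1}$, while $p<2\le r<p_{k+1}$ gives $p<p_{k+1}$; hence $p\in(q_{k+1},p_{k+1})$. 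Proposition \ref{T:TechnicalSubBergmanStatement}(i) then shows $\left|\widetilde{\bm{B}^{p_k}}\right|$, and therefore $\widetilde{\bm{B}^r}$, is bounded on $L^p(\h_{m/n})$. Note also that $r\in[\,p_k,p_{k+1})\subset(q_{k+1},p_{k+1})$, so $\left|\widetilde{\bm{B}^r}\right|$ is $L^r$ bounded as well.

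For the forward direction, assume $\widetilde{\bm{B}^r}f=f$. Since $f\in L^p(\h_{m/n})$, choose $\phi_n\in C_c^\infty(\h_{m/n})$ with $\|\phi_n-f\|_p\to 0$ and set $f_n:=\widetilde{\bm{B}^r}\phi_n$. Because $\left|\widetilde{\bm{B}^r}\right|$ is $L^r$ bounded, Proposition \ref{prop-subbergman2}(i) guarantees $\widetilde{\bm{B}^r}$ projects $L^r(\h_{m/n})$ onto $A^r(\h_{m/n})$, so $f_n\in A^r(\h_{m/n})$. Using $\widetilde{\bm{B}^r}f=f$ together with the $L^p$-boundedness just established,
\[
\|f_n-f\|_p=\left\|\widetilde{\bm{B}^r}(\phi_n-f)\right\|_p\lesssim\|\phi_n-f\|_p\to 0,
\]
which yields the approximation.

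For the converse, suppose $\widetilde{\bm{B}^r}f\neq f$. Adapting Proposition \ref{P:BonA^p} to the operator $\widetilde{\bm{B}^r}$, exactly the modification recorded in Remark \ref{R:NonSurjectivitySubBergman}, gives the term-by-term identity $\widetilde{\bm{B}^r}f=\sum_{\alpha\in\cs(\h_{m/n},L^r)}a_\alpha(f)e_\alpha$; this is legitimate because $\widetilde{\bm{B}^r}$ is now $L^p$ bounded and the square partial sums of $f$ converge in $L^p$ by Theorem \ref{thm-schauder}. Comparing with the expansion $f=\sum_{\alpha\in\cs(\h_{m/n},L^p)}a_\alpha(f)e_\alpha$ from \eqref{E:ApLaurentSeries}, the assumption $\widetilde{\bm{B}^r}f\neq f$ produces some $\beta\in\cs(\h_{m/n},L^p)\setminus\cs(\h_{m/n},L^r)$ with $a_\beta(f)\neq 0$. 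If $g_n\in A^r(\h_{m/n})$ approximated $f$ in $L^p$, then $a_\beta(g_n)=0$ for every $n$ by Corollary \ref{cor-aalpha}, so $a_\beta(g_n-f)=-a_\beta(f)\neq 0$; but continuity of $a_\beta$ on $A^p(\h_{m/n})$ (Proposition \ref{prop-coefficients}) forces $|a_\beta(g_n-f)|\lesssim\|g_n-f\|_p\to 0$, a contradiction.

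The main obstacle, and the sole point of genuine difference from the $p\ge 2$ case of Proposition \ref{T:ApproxGenHartogs1}, is the $L^p$-boundedness step: for $1<p<2$ the operator $\widetilde{\bm{B}^r}$ is not automatically bounded on $L^p$, so one must verify that the threshold bookkeeping of the partition \eqref{E:partition} places the conjugate exponent $p'$ strictly below $p_{k+1}$, equivalently $p$ strictly above $q_{k+1}$. Once the membership $p\in(q_{k+1},p_{k+1})$ is secured, both implications reduce to the arguments already used for $p\ge 2$.
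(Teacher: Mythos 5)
Your proposal is correct and follows essentially the same route as the paper: the paper's proof likewise observes that $p' < p_{k+1}$ forces $q_{k+1} < p$, hence $p \in (q_{k+1}, p_{k+1})$, invokes Proposition \ref{T:TechnicalSubBergmanStatement} for $L^p$-boundedness of $\widetilde{\bm{B}^r}$, and then declares the rest identical to Proposition \ref{T:ApproxGenHartogs1}. Your only addition is to spell out the detail the paper leaves implicit -- that for $1<p<2$ the term-by-term identity $\widetilde{\bm{B}^r}f=\sum_{\alpha\in\cs(\h_{m/n},L^r)}a_\alpha(f)e_\alpha$ comes from the adaptation in Remark \ref{R:NonSurjectivitySubBergman} rather than from orthogonal projection in $L^2$ -- which is a faithful (and welcome) elaboration, not a different argument.
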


\begin{proof}

Since $p' < p_{k+1}$, simple algebra shows that $q_{k+1} < p$, where $q_{k+1}$ is the conjugate exponent to $p_{k+1}$.  Since $p\in (q_{k+1},p_{k+1})$, Theorem \ref{T:TechnicalSubBergmanStatement} implies $\widetilde{\bm{B}^r}$ is bounded on $L^p$.

The rest of the proof is the same as for Proposition \ref{T:ApproxGenHartogs1}.
\end{proof}

\subsubsection{$L^2$-nearest approximant in $A^p$} Question (Q3) can be cast as a broad minimization problem. Suppose $\norm{\cdot}_X$ is an auxiliary norm on the space $L^p(\Omega)$, $\Omega\subset\C^n$ fixed.
\smallskip

\noindent {\it Problem:}  Given $g\in L^p(\Omega)$, find $G\in A^p(\Omega)$ so 
\begin{equation}\label{E:NormMinimizingFunction}
\norm{g-G}_X \le \norm{g-h}_X
\end{equation} 
for all $h\in A^p(\Omega)$.

For general $\norm{\cdot}_X$, techniques needed for this problem mostly await development.  But when $X=L^2(\Omega)$ the sub-Bergman operators give results.  Recall that for $p\ge2$, $\widetilde{\bm{B}^p}$ is the orthogonal projection from $L^2$ onto $G^{2,p}$, the latter space given in Proposition \ref{prop-subbergman1}.  If $\Omega$ is bounded, the diagram
\begin{center}
\begin{equation*}
\begin{matrix}[1.3]
L^p(\Omega) & \hookrightarrow & L^2(\Omega) \\
 \big\downarrow \bm{?} &  & \big\downarrow \widetilde{\bm{B}^p}\\
A^p(\Omega) & \hookrightarrow & G^{2,p}(\Omega) \\
\end{matrix}
\end{equation*}
\end{center}
summarizes relations between the function spaces, with $ \hookrightarrow$ denoting injection. Consider ``closest''  to mean closest measured by the $L^2$ norm in the following. If $g\in L^2(\Omega)$, the unique closest element in $G^{2,p}(\Omega)$ is $\widetilde{\bm{B}^p}g$. However when $\Omega =\h_{m/n}$, Theorem \ref{T:SubBergmanLpMapping} says that $\widetilde{\bm{B}^p}$ restricts to a bounded operator on $L^p(\h_{m/n})$.  It follows that $\widetilde{\bm{B}^p}g$ is also the closest element in 
$A^p(\Omega)$ to $g$. Thus,

\begin{proposition}\label{T:SubBergmanMinimization}
Let $p\ge2$ and $g\in L^p(\h_{m/n})$.  The function $ \widetilde{\bm{B}^p}g$ satisfies 
\begin{equation*}
\norm{g-\widetilde{\bm{B}^p}g}_{L^2} \le \norm{g-h}_{L^2}
\end{equation*}
for all $h\in A^p(\h_{m/n})$, with equality if and only if $h = \widetilde{\bm{B}^p}g$. 
\end{proposition}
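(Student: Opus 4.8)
The plan is to reduce the claim to the Hilbert-space nearest-point theorem, using the $L^p$ mapping of $\widetilde{\bm{B}^p}$ supplied by Theorem \ref{T:SubBergmanLpMapping} to guarantee that the projection stays inside $A^p(\h_{m/n})$. Since $\h_{m/n}$ is bounded and $p\ge2$, we have $L^p(\h_{m/n})\subset L^2(\h_{m/n})$, so $g\in L^2(\h_{m/n})$ and $\widetilde{\bm{B}^p}g$ is defined via the orthogonal projection $L^2(\h_{m/n})\to G^{2,p}(\h_{m/n})$. Because $G^{2,p}(\h_{m/n})$ is a closed subspace of the Hilbert space $L^2(\h_{m/n})$, the projection theorem gives that $\widetilde{\bm{B}^p}g$ is the \emph{unique} minimizer of $\norm{g-h}_{L^2}$ over $h\in G^{2,p}(\h_{m/n})$.

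First I would record that $A^p(\h_{m/n})\subset G^{2,p}(\h_{m/n})$, which is immediate from definition \eqref{eq-m2pdef}. Consequently, for every $h\in A^p(\h_{m/n})$ the projection theorem yields
\[
\norm{g-\widetilde{\bm{B}^p}g}_{L^2}\le \norm{g-h}_{L^2},
\]
with equality if and only if $h=\widetilde{\bm{B}^p}g$. The only thing left is to check that the minimizer $\widetilde{\bm{B}^p}g$ is itself a legitimate competitor, i.e. that it belongs to the smaller set $A^p(\h_{m/n})$; once this is known, $\widetilde{\bm{B}^p}g$ is automatically the unique $L^2$-nearest point of $A^p(\h_{m/n})$ as well, since the minimizer over the subset $A^p(\h_{m/n})$ of $G^{2,p}(\h_{m/n})$ inherits both the inequality and the uniqueness clause.

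To see $\widetilde{\bm{B}^p}g\in A^p(\h_{m/n})$, I would invoke Theorem \ref{T:SubBergmanLpMapping}: part (i) says $\left|\widetilde{\bm{B}^p}\right|$ is bounded on $L^p(\h_{m/n})$, hence so is $\widetilde{\bm{B}^p}$, and therefore $\widetilde{\bm{B}^p}g\in L^p(\h_{m/n})$. On the other hand $\widetilde{\bm{B}^p}g\in G^{2,p}(\h_{m/n})\subset\co(\h_{m/n})$. Since $G^{2,p}(\h_{m/n})\cap L^p(\h_{m/n})=A^p(\h_{m/n})$ (as used in the proof of Proposition \ref{prop-subbergman2}(i)), it follows that $\widetilde{\bm{B}^p}g\in A^p(\h_{m/n})$, completing the argument.

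The main obstacle is exactly this last point: that the $L^2$-orthogonal projection does not push $g$ out of $L^p$. On a general bounded domain the sub-Bergman projection need not be $L^p$ bounded, and then $\widetilde{\bm{B}^p}g$ may fail to lie in $A^p$ (cf. Remark \ref{R:Bterm}); it is precisely the type-$A$ kernel estimates behind Theorem \ref{T:SubBergmanLpMapping} that rescue the statement on $\h_{m/n}$. Everything else is the standard closed-subspace nearest-point argument in a Hilbert space.
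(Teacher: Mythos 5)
Your proof is correct and follows essentially the same route as the paper: the Hilbert-space projection theorem applied to the closed subspace $G^{2,p}(\h_{m/n})\supset A^p(\h_{m/n})$, combined with Theorem \ref{T:SubBergmanLpMapping} to certify that $\widetilde{\bm{B}^p}g$ actually lands in $A^p(\h_{m/n})$ and hence is the unique minimizer over that smaller set. Your explicit verification via $G^{2,p}\cap L^p = A^p$ just spells out a step the paper leaves implicit.
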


\bigskip

\bibliographystyle{acm}
\bibliography{ChaEdhMcN18}

\end{document}